\documentclass[12pt]{amsart}
\usepackage[T1]{fontenc}
\usepackage[utf8]{inputenc}
\usepackage{amsmath,amsfonts,amssymb,amsthm}
\usepackage{mathtools}
\usepackage{tensor}
\usepackage{pgf,tikz}
\usepackage{csquotes}
\usetikzlibrary{matrix,arrows,calc}
\usepackage{tikz-cd}
\usepackage{shuffle}

\usepackage[
	backend=bibtex,
	style=alphabetic,
	minalphanames=3,
	maxnames=99,
	maxalphanames=4,
	giveninits=true,
	isbn=false,
	doi=true,
]{biblatex}
\usepackage{enumitem}

\usepackage{url}
\usepackage[svgnames]{xcolor}

\usepackage[
	colorlinks, 
	citecolor=DarkGreen,
	linkcolor=DarkBlue,
	bookmarksdepth=4
]{hyperref}

\usepackage{cleveref}

\usepackage[colorinlistoftodos]{todonotes}

\numberwithin{equation}{section}

\newtheorem{thm}{Theorem}
\newtheorem{prop}[thm]{Proposition}
\newtheorem{lemma}[thm]{Lemma}
\newtheorem{cor}[thm]{Corollary}
\newtheorem{conj}[thm]{Conjecture}

\theoremstyle{definition}
\newtheorem*{prop*}{Proposition}

\numberwithin{thm}{section}

\theoremstyle{remark}
\newtheorem{rem}[thm]{Remark}
\newtheorem{rmk}[thm]{Remark}

\newtheorem{example}[thm]{Example}

\theoremstyle{definition}
\newtheorem{defn}[thm]{Definition}

\theoremstyle{plain}
\newtheorem{slemma}{Lemma}[subsection]

\theoremstyle{remark} 
\newtheorem{sremark}[slemma]{Remark}

\theoremstyle{definition} 
\newtheorem{sdefinition}[slemma]{Definition}

\AddToHook{env/defn/begin}{\crefalias{thm}{defn}}
\AddToHook{env/prop/begin}{\crefalias{thm}{prop}}
\AddToHook{env/lemma/begin}{\crefalias{thm}{lemma}}
\AddToHook{env/conjecture/begin}{\crefalias{thm}{conjecture}}
\AddToHook{env/cor/begin}{\crefalias{thm}{cor}}
\AddToHook{env/conj/begin}{\crefalias{thm}{conj}}
\AddToHook{env/rem/begin}{\crefalias{thm}{rem}}
\AddToHook{env/rmk/begin}{\crefalias{thm}{rmk}}
\AddToHook{env/remarks/begin}{\crefalias{thm}{remarks}}
\AddToHook{env/example/begin}{\crefalias{thm}{example}}

\newcommand{\bA}{\mathbb A}
\renewcommand{\AA}{\mathbb A}

\newcommand{\Bb}{\mathcal{B}}
\newcommand{\CC}{\mathbb C}

\newcommand{\rd}{\mathrm d}
\newcommand{\rD}{\mathrm D}
\newcommand{\dR}{\mathrm{dR}}
\newcommand{\cE}{\mathcal E}
\newcommand{\Fib}{\mathrm{Fib}}
\newcommand{\FF}{\mathbb F}
\newcommand{\bF}{\mathbb F}
\newcommand{\cF}{\mathcal{F}}
\newcommand{\cG}{\mathcal{G}}

\newcommand{\bG}{\mathbb G}
\newcommand{\Hh}{\mathcal{H}}

\newcommand{\gp}{{\mathrm{gp}}}

\newcommand{\Isom}{\mathrm{Isom}}
\newcommand{\cH}{\mathcal{H}}
\newcommand{\rH}{\mathrm H}
\newcommand{\Ind}{\mathrm{Ind}}

\newcommand{\fn}{\mathfrak n}
\newcommand{\MT}{\mathbf{MT}}
\newcommand{\MHS}{\mathbf{MHS}}
\newcommand{\rMT}{\mathrm{MT}}
\newcommand{\mot}{\mathrm{mot}}
\newcommand{\PL}{{\mathrm{PL}}}
\newcommand{\bP}{\mathbb P}
\newcommand{\PP}{\mathbb P}
\newcommand{\Pro}{\mathrm{Pro}}
\newcommand{\bQ}{\mathbb Q}
\newcommand{\bR}{\mathbb R}
\newcommand{\bZ}{\mathbb Z}
\newcommand{\cI}{\mathcal{I}}
\newcommand{\cO}{\mathcal{O}}

\newcommand{\Sh}{\mathrm{Sh}}

\newcommand{\fu}{\mathfrak{u}}
\newcommand{\ur}{\mathrm{ur}}
\newcommand{\Un}{\mathrm{Un}}
\newcommand{\univ}{\mathrm{univ}}
\newcommand{\cV}{\mathcal{V}}
\newcommand{\hwt}{\mathrm{hwt}}
\newcommand{\rZ}{\mathrm{Z}}
\newcommand{\fz}{\mathfrak z}

\newcommand{\llangle}{\langle\!\langle}
\newcommand{\rrangle}{\rangle\!\rangle}

\newcommand{\Bcris}{\mathrm{B_{cris}}}
\newcommand{\et}{\operatorname{\acute{e}t}}
\newcommand{\logBK}{\log_{\mathrm{BK}}}
\newcommand{\lto}{\longrightarrow}

\mathchardef\mhyphen="2D %

\DeclareMathOperator{\ab}{{\mathrm{ab}}}

\DeclareMathOperator{\ad}{ad}
\DeclareMathOperator{\Ad}{Ad}
\DeclareMathOperator{\Aut}{Aut}
\DeclareMathOperator{\can}{can}

\DeclareMathOperator{\cris}{cris}
\DeclareMathOperator{\Der}{Der}
\DeclareMathOperator{\Ext}{Ext}

\DeclareMathOperator{\gr}{gr}

\DeclareMathOperator{\red}{red}

\DeclareMathOperator{\loc}{loc}
\DeclareMathOperator{\Li}{Li}
\DeclareMathOperator{\Lie}{Lie}

\DeclareMathOperator{\Hom}{Hom}
\DeclareMathOperator{\per}{per}

\DeclareMathOperator{\Sel}{Sel}
\DeclareMathOperator{\Spec}{Spec}

\DeclareMathOperator{\Vect}{Vect}

\newcommand{\new}{\newcommand}
\new{\dmo}{\DeclareMathOperator}
\new{\bb}{\mathbb}
\new{\fk}{\mathfrak}
\new{\ZZ}{\bb{Z}}
\new{\QQ}{\bb{Q}}
\new{\ka}{\kappa}
\new{\m}{\mathrm}
\new{\ep}{\epsilon}
\new{\al}{\alpha}
\new{\be}{\beta}
\new{\ga}{\gamma}
\new{\de}{\delta}
\new{\ze}{\zeta}
\new{\la}{\lambda}
\new{\si}{\sigma}
\new{\up}{\upsilon}
\new{\om}{\omega}
\new{\NN}{\bb{N}}
\new{\cl}{\mathcal}
\new{\Oo}{\cl{O}}
\new{\Pp}{\cl{P}}
\new{\uU}{\fk{u}}
\new{\nN}{\fk{n}}
\new{\Uu}{\cl{U}}
\new{\Ee}{\cl{E}}

\new{\opnm}{\operatorname}
\new{\wt}{\opnm{wt}}
\new{\zeu}{\ze^\uU}
\new{\pp}{\rangle}
\new{\qq}{\langle}

\new{\note}[1]{
{#1}}

\new{\from}{\leftarrow}

\new{\Aa}{\mathcal{A}}
\new{\Si}{\Sigma}
\new{\Kk}{\mathcal{K}}
\new{\hand}{\quad \text{and} \quad}
\new{\surj}{\twoheadrightarrow}
\new{\jrus}{\twoheadleftarrow}
\new{\RR}{\mathbb{R}}

\newcommand{\logu}{\log^\uU}
\newcommand{\Liu}{\Li^\uU}

\newcommand{\myCol}{\operatorname{Col}}

\usepackage[all]{xy}

\newcommand{\UZ}{U_Z}

\newcommand{\fp}{{\mathfrak p}}

\newcommand{\Fb}{{\mathbb F}}

\newcommand{\Pb}{{\mathbb P}}

\newcommand{\Zb}{{\mathbb Z}}
\newcommand{\Qb}{{\mathbb Q}}

\newcommand{\Ic}{{\mathcal I}}

\newcommand{\Oc}{{\mathcal O}}

\newcommand{\Free}{\mathrm{Free}}

\newcommand{\nf}{\mathfrak{n}}

\newcommand{\Gm}{{\mathbb{G}_m}}
\newcommand{\Qp}{{\QQ_p}}
\newcommand{\Zp}{{\ZZ_p}}

\newcommand{\ev}{\mathrm{ev}}

\newcommand{\Poneminusthreepoints}{\PP^1\smallsetminus\{0,1,\infty\}}

\newcommand{\real}{\mathrm{real}}

\title{Mixed Tate Chabauty--Kim beyond the polylog quotient}
\author{David Corwin, Ishai Dan-Cohen, Martin Lüdtke}
\thanks{\textbf{Grant acknowledgements:} I.D. was supported by ISF grants 726/17 and 621/21. 
M.L. is supported by a Minerva Fellowship of the Minerva Stiftung Gesellschaft für die Forschung mbH and also acknowledges support through a guest postdoc fellowship at the Max Planck Institute for Mathematics in Bonn and through an NWO Grant, project number VI.Vidi.192.106, at the University of Groningen.
\textbf{Political disclaimer:} I. Dan-Cohen asks not to be considered responsible for the actions of any government that does not fully embrace the principles of democracy. }

\begin{document}

\begin{abstract}
We develop an algorithmic approach to mixed Tate Chabauty--Kim theory for the full unipotent fundamental group of $\Poneminusthreepoints$, beyond the polylogarithmic quotient used in previous work. The key step is computing the cocycle evaluation map in the presence of a nontrivial motivic Galois action, using relations among motivic multiple zeta values. This yields explicit Kim functions involving multiple polylogarithms, as well as an algorithm for solving the $S$-unit equation whose halting is implied by certain natural conjectures.
\end{abstract}

\maketitle

\thispagestyle{empty}
\tableofcontents

\section{Introduction}

For $(X,x)$ a flat and finite type scheme over an open subscheme $Z$ of $\Spec \ZZ$ (perhaps assumed to have smooth generic fiber) the rational pro-unipotent completion $\pi_1^{\m{Betti}}(X,x)$ of the fundamental group of $X(\CC)$ is conjectured to come from a pro-unipotent group object $\pi_1^\mot(X,x)$ of the Tannakian category $\opnm{MM}(Z, \QQ)$ of mixed motives. This conjectural pro-unipotent group object of a conjectural $\QQ$-Tannakian category is referred to as the \emph{unipotent fundamental group}. While the correct construction of $\opnm{MM}(Z, \QQ)$ has yet to be understood, the various realizations of the unipotent fundamental group (at least in special cases), as well as the expected comparison isomorphisms between them, have been studied intensely for many years. Particularly when $X$ is a suitable model of a hyperbolic curve, and in the presence of a chosen closed point $p \in Z$, Chabauty--Kim theory uses these realizations, and the interplay between them, to construct obstructions for a $\Zp$-point of $X$ to come from a $Z$-point. In turn, these obstructions shed new light on the set $X(Z)$ of \emph{$Z$-integral points} of $X$ in various ways. In one direction, they provide a view of integral points which has a geometric or homotopical flavor, and help to connect their study with core aspects of arithmetic such as algebraic K-theory, $p$-adic Galois cohomology, $p$-adic Hodge theory, $p$-adic periods, $p$-adic heights, and syntomic regulators. In another direction, they go a certain distance towards an ideal of understanding which is \textit{explicit}, \textit{computational}, and even \textit{algorithmic}. 

In the literature on Chabauty--Kim theory, two separate strands (of rather unequal size) may be discerned. In one strand, to which the Quadratic Chabauty method belongs, one restricts attention to very small finite-type quotients of $\pi_1^\mot(X,x)$ \cite{BD18, BD-II, BD19, BDMTV19, BDMTV2}. In the other strand, to which the present article belongs, we instead restrict attention to very special $X$; most often, as will be the case starting now, and throughout the remainder of this article,
\[
X = \Poneminusthreepoints.
\]
This case has proved a fruitful testing ground for Kim's program \cite{CKone, MTMUE, MTMUEII, BrownIntegral, CDC:polylog1, PolGonII, BBKLMQSX, BKL:chabauty_kim_sc, luedtke:refined, GeometricStep, Luedtke:motivic_selmer_scheme, LL:PolylogNF, KLL:cyclotomic}. 
Nevertheless, it has been somewhat of an embarrassment that up until now we've had to restrict attention to the so-called \textit{polylogarithmic quotient}
\[
\pi_1^\mot(X) \surj \pi_1^\mot(X)_{\PL}.
\]
In the present setting, the unipotent fundamental group is not merely a conjectural object of a conjectural category, but is well defined as a pro-unipotent group object of the Tannakian category $\opnm{MT}(Z, \QQ)$ of mixed Tate motives.\footnote{This means, for instance, that its Lie algebra is endowed with the structure of a pro-object of $\opnm{MT}(Z, \QQ)$; see \cite[§5]{Deligne89} for generalities concerning the notion of unipotent group object in a Tannakian category.} Unlike $\pi_1^\mot(X)$ itself, whose Lie algebra is quite complicated as a motive, if we set our basepoint $x$ equal to a $\ZZ$-integral tangent vector, then $\Lie \pi_1^\mot(X)_{\PL}$ is merely a product of Tate motives, and is in particular semisimple. Previous works on this topic have depended on this semisimplicity quite heavily. 

Chabauty--Kim theory associates to every motivic quotient
\[
\pi_1^\mot(X) \surj \Pi
\]
of the unipotent fundamental group, a subset $X(\Zp)_{Z,\Pi} \subseteq X(\Zp)$ of the $p$-adic points containing the set of global points $X(Z)$, thus forming a sandwich
\[
X(Z) \subseteq X(\Zp)_{Z,\Pi} \subseteq X(\Zp).
\]
These sandwiches are further arranged so that if
\[
\Pi \surj \Pi'
\]
is a quotient of $\Pi$, then
\[
X(\Zp)_{Z,\Pi} \subseteq X(\Zp)_{Z,\Pi'}.
\]
The set $X(\Zp)_{Z,\Pi}$ is constructed as the vanishing locus of an ideal
\[
\Ic_{Z,\Pi} \subseteq \opnm{Col}(X(\Zp))
\]
in the ring of Coleman functions \cite{Besser}. We refer to elements of $\Ic_{Z,\Pi}$ as \textit{Kim functions} associated to $Z$ and $\Pi$. In the present setting, general Kim functions are polynomials in $p$-adic multiple polylogarithms
\[
\Li_{k_1, \dots, k_m}(z)
\]
\cite{FurushoI, FurushoII} with $\Qp$-coefficients; Kim functions associated to quotients that factor through the polylogarithmic quotient are polynomials in the \textit{classical polylogarithms} $(m=1)$. Thus, by going beyond the polylog quotient we obtain the first concrete examples of Kim functions for the thrice-punctured line that involve terms with $m >1$. For instance, combining \Cref{4functions} and \Cref{period tau sigma} we find:

\begin{thm}
Let $Z = \Spec \ZZ[1/2]$, let $\pi_1^\mot(X) \surj \Pi$ be the level-4 quotient, i.e., the quotient by the fifth step in the descending central series, let $p$ be any odd prime, let
\[
a = \frac87 \Li_4(2),
\quad
\text{and let}
\quad
b 
= \frac47 \Li_{3,1}(-1).
\]
Then the function 
\begin{equation*}
	\begin{split}
		&\log(2) \zeta(3) \left( \Li_{3,1}(z) + \tfrac18 \log(z)^2 \Li_1(z)^2\right) \\
		&\qquad - a \log(z) \Li_{2,1}(z) - b \Li_1(z) \Li_3(z) + b \zeta(3) \Li_1(z)
	\end{split}
\end{equation*}
belongs to the Kim ideal $\Ic_{Z,\Pi}$ which defines the locus $X(\Zp)_{Z,\Pi}$.
\end{thm}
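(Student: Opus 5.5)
The plan follows the standard Chabauty--Kim construction, working inside the motivic (mixed Tate) framework. For $Z=\Spec\ZZ[1/2]$ the motivic Galois group of $\opnm{MT}(Z,\QQ)$ has unipotent radical whose Lie algebra is free on generators $\tau_{1}$ (dual to $\log 2$) and $\sigma_{2k+1}$ for $k\ge1$ (dual to $\zeta(2k+1)$). In weights $\le 4$ the relevant generators are $\tau_1,\sigma_3$ together with the brackets $[\tau_1,\sigma_3]$, etc. Thus the coordinate ring of the unipotent Galois group $U_Z$ in degrees $\le 4$ is spanned by the $f$-alphabet words $f_1,f_1^2,f_3,f_1f_3,f_3f_1,f_1^4,\dots$.

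The first step is to write down the universal cocycle evaluation map
\[
\ev\colon \cO(\Pi)\lto \cO(\Sel_{Z,\Pi})\otimes \cO(U_Z)
\]
in the level-$4$ quotient. Its $p$-adic realization composed with the $p$-adic period map $\per_p\colon\cO(U_Z)\to\Qp$ gives the Kim functions. Concretely, a point $z\in X(Z)$ has motivic unipotent path $\Liu$-coordinates: each motivic multiple polylogarithm $\Li^\uU_{w}(z)$ of weight $\le 4$ is expressed, via Brown's decomposition (the coaction), as a polynomial in the coordinates of the Selmer scheme with coefficients in the $f$-alphabet. The Selmer scheme in level $4$ has dimension equal to the number of Lyndon-word-type generators of $\cO(U_Z)$ in degrees $\le4$, namely $f_1,f_3$ and $f_3f_1$ (or the relevant Lie-dual count: $\tau_1,\sigma_3,[\tau_1,\sigma_3]$, so dimension $3$). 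Meanwhile $\cO(\Pi)$ in weights $\le4$ has more generators. Here I would invoke \Cref{4functions}, which (as I assume from the citation) gives four functions in the kernel of $\ev$ composed with evaluation, i.e.\ the $\cO(U_Z)$-valued universal identities. The stated function is the one whose coefficients are the images of certain $f$-words.

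The key step is to compute $\ev$ in the presence of the nontrivial Galois action: $\Lie\Pi$ at level $4$ is not semisimple, so the cocycle $U_Z\to\Pi$ is twisted by the action and its coordinates involve the coproduct of motivic MZVs. I would write the cocycle as $c(g)=g\cdot\gamma\cdot\gamma^{-1}$-type expression, expand $\Li^\uU_{3,1}(z)$, $\Li^\uU_{2,1}(z)$, $\log^\uU(z)\Li^\uU_{2,1}(z)$, $\Li^\uU_1\Li^\uU_3$, etc.\ in the basis of $f$-words with coefficients in Selmer coordinates $(x_1,x_3,x_{31})$. I would then find the $\QQ$-linear (over $\cO(U_Z)$) combination in which the Selmer coordinates cancel, leaving a constant, i.e.\ an element of $\cO(U_Z)$ that vanishes on the Selmer image. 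The coefficients $\log2\,\zeta(3)$, $a$, $b$ appear as periods of $f_1f_3$, $f_3f_1$-type elements. This is where \Cref{period tau sigma} enters: it identifies the $p$-adic periods of the relevant degree-$4$ elements of $\cO(U_Z)$ as $\frac87\Li_4(2)$ and $\frac47\Li_{3,1}(-1)$, using motivic relations (e.g.\ expressing $\Li^\uU_4(1/2)$ and $\Li^\uU_{3,1}(-1)$ in the $f$-alphabet and inverting).

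Finally, apply $\per_p$: since $X(Z)\subseteq X(\Zp)$ maps to the Selmer locus, the resulting Coleman function vanishes on $X(Z)$, and by the definition of $\Ic_{Z,\Pi}$ as the kernel of pulling back along the localization map, it lies in $\Ic_{Z,\Pi}$. The main obstacle is the middle step: correctly computing the twisted coaction terms (e.g.\ the $\tfrac18\log(z)^2\Li_1(z)^2$ correction and the $b\,\zeta(3)\Li_1(z)$ term) and checking the Selmer coordinates cancel exactly. I would first verify it by using the shuffle-regularized coproduct formula for motivic multiple polylogarithms, checking the cancellation degree by degree in the $f$-alphabet.
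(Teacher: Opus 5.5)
Your top-level reduction matches the paper's proof. The displayed function is exactly the third function of \Cref{4functions}, with $a_{\tau_2\sigma_3}$ and $a_{\sigma_3\tau_2}$ replaced by their values from \Cref{period tau sigma}. It lies in $\cI_{Z,\Pi}$ because the corresponding function on $\Pi^{\dR}_{\bQ_p}$ pulls back to zero along $\ev_p$, which corresponds to $\logBK\circ\loc_p$ under the motivic--étale comparison. Citing those two results finishes the proof. The problem is the step you call the main obstacle, namely re-deriving \Cref{4functions} yourself: the framework you set up for it is wrong and would not produce the right relation.

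\textbf{The Selmer scheme has dimension $4$, not $3$.} For $S=\{2\}$ in level $4$, a $\bG_m$-equivariant cocycle is determined by its values on the free generators $\tau_2,\sigma_3$. Each value is a vector in a $2$-dimensional graded piece of $\Lie(\Pi_4^{\dR})$, so $c(\tau_2)=xe_0+ye_1$ and $c(\sigma_3)=z_3\lambda(e_0e_0e_1)+z_{2,1}\lambda(e_0e_1e_1)$ (\Cref{ex:coordinates-depth-4}). Your coordinates $(x_1,x_3,x_{31})$ count the dimension of $\fn(Z)$ in half-weights $\geq -4$ instead. The bracket $[\tau_2,\sigma_3]$ carries no independent coordinate, because $c([\tau_2,\sigma_3])$ is forced by the cocycle identity; that forcing is exactly where the Galois action enters. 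A $3$-parameter family in the $8$-dimensional $\Pi_4^{\dR}$ would satisfy at least five independent relations, which is inconsistent with the four of \Cref{4functions}.

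\textbf{The elimination is polynomial, not linear over $A(Z)$.} The relation involves products such as $\log(z)\Li_{2,1}(z)$, $\Li_1(z)\Li_3(z)$ and $\log(z)^2\Li_1(z)^2$.

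\textbf{The twisting is misattributed.} By \Cref{galois-action-lie-algebras} and \Cref{lem:kummer-cocycle-via-mzv}, $\sigma_3.e_0=0$ and $f_{e_0e_0e_1e_1}(\sigma_3.e_1)=-\langle\sigma_3,\zeta^{\fu}(3)\rangle=-1$. So $c(\sigma_3\tau_2)=c(\sigma_3)c(\tau_2)+y\,\sigma_3.e_1$ gives $\ev^\sharp\Li_{3,1}=\tfrac1{24}\log^{\fu}(2)^4x^2y^2+a_{\tau_2\sigma_3}xz_{2,1}+a_{\sigma_3\tau_2}y(z_3-1)$. The $-1$ is the only trace of the action, and it accounts only for the term $b\zeta(3)\Li_1(z)$.

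The term $\tfrac18\log(z)^2\Li_1(z)^2$ is untwisted. It is the correction needed to cancel the $x^2y^2$-terms once the shuffle relation $a_{\tau_2\sigma_3}+a_{\sigma_3\tau_2}=\log^{\fu}(2)\zeta^{\fu}(3)$ is used; note $\tfrac1{24}+\tfrac18=\tfrac16$. Your plan never mentions this input.
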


Here and below, multiple polylogarithms and multiple zeta values all refer to the $p$-adic versions of Furusho (loc.\ cit.).

In other concrete examples, we turn our attention to the \textit{refined} Chabauty--Kim locus
\[
X(\Zp)_{Z,\Pi}^\m{min} \subseteq X(\Zp)_{Z,\Pi}
\]
of Betts--Dogra \cite{BD:refined}. Continuing with the level-4 quotient $\pi_1^\mot(X) \surj \Pi$, we're able to completely compute the case $Z = \Spec \ZZ[1/6]$ (see §\ref{sec: refined Z-one-sixth}). The refined locus breaks up into a union, parametrized (in the present case) by $\Si \in \{0,1,\infty\}^2$, and for another concrete example, we find that the functions
\[
		\log(z), \quad
		\Li_2(z), \quad
		\Li_4(z), \quad
		\text{and}
\]
\[
		(\Li_{2,1}(z) - \zeta(3)) \Li_{3,1}(z) - (\Li_3(z) - \zeta(3))\Li_{2,1,1}(z),
\]
vanish on the portion associated to $\Si = (1,1)$. For $\Sigma = (1,0)$, four explicit Kim functions are derived in \Cref{thm:kim_functions_10}, extending the list of two polylogarithmic Kim functions from \cite{luedtke:refined}.

Previous works on this topic have led, among other things, to the construction of an algorithm which takes $Z \subseteq \Spec \ZZ$ as input, and, upon halting, is proved to produce the finite set $X(Z)$. This algorithm, however, comes with a caveat: its halting depends on three conjectures. Two of them are fairly well established. The third, however, \emph{Kim's Conjecture}, comes with a further caveat.

Kim's Conjecture states that  
\[
X(Z) = X(\Zp)_{Z,\Pi} \quad
\text{for all sufficiently large $\Pi$}.
\]
That is, the method eventually cuts out the exact set of $Z$-integral points. Because we were limited to working with the polylogarithmic quotient, we were forced to rely on a somewhat ad hoc strengthening of this conjecture, in which we restrict attention to quotients that factor through the polylogarithmic quotient. However, recent work by X.\ Li and one of the authors (M.L.) in the number field setting has produced examples where this stronger conjecture fails \cite[§6.4]{LL:PolylogNF}.

One of our goals here is to get rid of this further caveat. In doing so, we find that we must assume yet another conjecture in order to prove that our algorithm halts. In fact, this newly added member of our list of dependencies is an intersection of several conjectures concerning relations between multiple zeta values; we refer to it as the ``syncretist's multiple zeta value conjecture.'' The resulting \emph{Equation solving theorem} goes as follows.

\begin{thm}
\label{30305A}
Let $Z$ be an open subscheme of $\Spec \ZZ$.
\begin{enumerate}
\item
Suppose the \textit{Equation solving algorithm} (\ref{30302B}) halts. Then the output is equal to the set $X(Z)$ of $Z$-integral solutions to the unit equation. 
\item
Suppose \textit{Kim's Conjecture}, \textit{Goncharov's conjecture refined with respect to ramification}, the \textit{$p$-adic period conjecture}, and the \textit{syncretist's motivic multiple zeta value conjecture} hold. Then the \textit{Equation solving algorithm} halts.
\end{enumerate}
\end{thm}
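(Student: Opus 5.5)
We will prove the two parts separately, following the architecture of the earlier polylogarithmic Equation solving theorem, with the algorithm (\ref{30302B}) understood as a loop over pairs $(p,\Pi)$. For a fixed auxiliary prime $p \in Z$ and a growing sequence of finite-level motivic quotients $\Pi$ (the descending central series quotients), it computes Kim functions in $\Ic_{Z,\Pi}$ to sufficient $p$-adic precision. In parallel it enumerates $Z$-integral points of $X$ by search. It halts when the finite set of points found by search accounts, with multiplicities and precision certified, for every zero of the computed Kim functions in each residue disk of $X(\Zp)$.

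\textbf{Part (1).} The algorithm only halts after certifying that every $p$-adic zero of the computed Kim functions, i.e.\ every point of $X(\Zp)_{Z,\Pi}$, is one of the integral points already found. So it suffices to prove two things.

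First, every function the algorithm produces genuinely lies in $\Ic_{Z,\Pi}$. This is soundness of the construction. The Kim functions are obtained by pulling back, along the unipotent Albanese map, elements of the kernel of the map from the coordinate ring of $\pi_1^{\dR}(X)$ to the coordinate ring of the Selmer scheme $H^1(G^{\mathrm{MT}}_Z,\Pi)$. This kernel is computed via the cocycle evaluation map. The point to check is that when the algorithm uses relations among motivic multiple zeta values to express the $p$-adic periods, it only uses relations that are \emph{proved}. These are motivic relations, valid in the motivic coalgebra and hence under the $p$-adic period map by Furusho/Yamashita. No conjecture is used in the output; conjectures only ensure that enough relations are found.

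Second, the halting certificate correctly bounds zeros. This uses Newton polygon / Strassmann-type arguments in each residue disk, which we take from the earlier algorithm. Together these give $X(Z) \subseteq \text{output} \supseteq X(\Zp)_{Z,\Pi} \supseteq X(Z)$, hence equality.

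\textbf{Part (2).} We must show each stage of the loop terminates and that the final halting condition is eventually met.

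\begin{enumerate}
\item \emph{Computing the Selmer scheme.} Goncharov's conjecture refined with respect to ramification identifies the motivic Galois group of $\mathrm{MT}(Z,\QQ)$ with a free pro-unipotent group on known generators, with explicit generators given by motivic (multiple) polylogarithms at roots/units. This makes $H^1(G^{\mathrm{MT}}_Z,\Pi)$ an explicitly computable affine scheme of known dimension.
\item \emph{Computing the cocycle evaluation map.} We need to express coefficients of the image of the Albanese map in a basis of the motivic periods. The syncretist's conjecture guarantees that enumerating relations among motivic MZVs (double shuffle, associator, etc.) produces a basis in finite time. Consequently the evaluation map is computable exactly.
\item \emph{Producing a nonzero Kim function.} Kim's dimension inequality for large $\Pi$ gives a nonzero element of the kernel.
\item \emph{Meeting the halting condition.} The $p$-adic period conjecture ensures that distinct motivic periods have distinct $p$-adic values, so that exact and numerical zero-counting are compatible. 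In particular, integral points are simple zeros after passing to a suitable combination. Kim's Conjecture then guarantees that for large $\Pi$, $X(\Zp)_{Z,\Pi}=X(Z)$, so the certificate is eventually satisfied.
\end{enumerate}

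I expect the main obstacle to be step (2), making the cocycle evaluation map effectively computable in the presence of nontrivial Galois action beyond the polylog quotient. I would first try to reduce it, via the coaction formula on motivic iterated integrals, to linear algebra over $\QQ$ in graded pieces of the motivic MZV algebra. There I would invoke the syncretist conjecture to know when the enumeration of relations has stabilized.
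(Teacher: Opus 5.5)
Your overall architecture matches the paper's: soundness of the motivic Kim functions plus Newton-polygon certification for (1), and one conjecture per subroutine for (2). However, the final step of Part (2) does not go through as written.

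The halting check concerns the zeros of specific functions $F_i^{\mathrm{an}}$. These are obtained by clearing denominators in generators of $\cI^{\mathrm{mot}}_{Z,N}\subseteq K(Z)\otimes A(X)$ and then applying $\per_p$ and $(\kappa_p^{\dR})^\sharp$. Kim's Conjecture, by contrast, is about the full ideal $\cI_{Z,N}$. A priori the $F_i^{\mathrm{an}}$ cut out only the possibly larger motivic locus, and a denominator lying in $\ker(\per_p)$ could even make them degenerate.

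Closing this gap is exactly the job of the $p$-adic period conjecture:
injectivity of $\per_p$ on the domain $A(Z)$ extends it to a field embedding $K(Z)\hookrightarrow\Qp$;
the $p$-adic localization map is the base change of $\ev_{K(Z)}$ along this embedding;
kernels commute with this flat base change, so the $F_i^{\mathrm{an}}$ generate $\cI_{Z,N}$.
Your stated use of that conjecture (distinct periods have distinct $p$-adic values, so exact and numerical zero counts agree and integral points become simple zeros) does not yield this.

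Simplicity of zeros cannot come from the period conjecture at all. It is part of Kim's Conjecture in the scheme-theoretic (reduced) form adopted after Conjecture~\ref{conj:kim}. The set-theoretic equality $X(\Zp)_{Z,N}=X(Z)$ you invoke is insufficient for a certificate that counts roots with multiplicity: an integral point that is a double zero of every Kim function would make the check fail forever.

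With both ingredients the argument closes. At each integral point the ideal is locally the maximal ideal of a DVR, so a single $F_i^{\mathrm{an}}$ already has a simple zero there; no combination is needed. By compactness of $X(\Zp)$, for $\epsilon$ small each ball lies in a region on which some single $F_i^{\mathrm{an}}$ vanishes at most at the integral point the ball contains, and nowhere if it contains none. For sufficiently fine approximations the Newton-polygon check then succeeds.

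There are also some misattributions elsewhere.
\begin{enumerate}
\item Goncharov's refined conjecture is not what makes $U_Z$ free or $\rH^1(G_Z,\Pi^{\dR})$ an affine space of known dimension. Both are unconditional (via Borel, see \S\ref{sec:mixed-tate-galois-group}, and Theorem~\ref{coordinates-on-selmer-scheme}). Its role is that $A(Z^o)$ is spanned by combinatorially unramified motivic iterated integrals, so the exact basis algorithm halts. This fixes generators $\tau_q,\sigma_i$ of $\fn(Z)$ whose shuffle-basis periods $a_\omega$ can then be approximated $p$-adically by the change-of-basis algorithm.
\item Motivic MZV relations are not used to express $p$-adic periods. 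They enter inside the geometric step, where they compute the Galois action $\sigma_i.e_1$ exactly via Lemma~\ref{galois-action-lemma} and the exact decomposition algorithm. Soundness in (1) needs nothing beyond $\per_p$ being a ring homomorphism; no appeal to Furusho or Yamashita is required.
\item Your chain of inclusions in (1) has one inclusion reversed. It should read: output $\subseteq X(Z)\subseteq X(\Zp)_{Z,N}\subseteq$ common zero locus of the $F_i^{\mathrm{an}}$ $\subseteq$ output. The first inclusion holds because the output is produced by naive search.
\end{enumerate}
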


The value of this exchange of conjectures needs explaining. At the time of writing, Kim's Conjecture, which was formulated in \cite{nabsd}, is still quite new. While surely the conjecture is backed up by some philosophical motivation, evidence for it is far from decisive. All the more so if we must rely on a strengthened version, which, as currently formulated, applies only to the thrice-punctured line. To make things worse, as explained in the introduction to \cite{PolGonII}, this strengthened version, where we restrict attention to the polylogarithmic quotient, traveled down a bumpy road, from which it emerged with a bruise and a bandaid. In light of the counterexamples from \cite{LL:PolylogNF} in the number field setting, this strengthened version of Kim's Conjecture now appears to have been too optimistic.

By contrast, several of the conjectures incorporated into our syncretist's multiple zeta value conjecture are old and widely believed to be true. Moreover, if we apply them to \textit{motivic} multiple zeta values (instead of \textit{complex} or \textit{$p$-adic} multiple zeta values), then their content is purely combinatorial, which makes them somewhat less mysterious than Kim's Conjecture. Thus, by returning to Kim's \textit{actual} conjecture, even at the cost of adding in this additional conjecture, we place our algorithm, as well as our ensuing \textit{Equation solving theorem} concerning its halting, on more solid ground. In fact, this new dependence on relations between multiple zeta values, while weakening our theorem in one way, adds interest in a different way, since it helps to connect different areas of investigation, and gives new motivation for the study of multiple zeta values. 

The Tannakian fundamental group $G_Z$ of the category of mixed Tate motives at the de Rham fiber functor decomposes canonically as an extension of $\Gm$ by a free pro-unipotent group $U_Z$. The $p$-adic period map \cite{ChatUnv} provides a $\Qp$-point
\[
\eta_p: \Spec \Qp \to U_Z.
\]
Goncharov's theory of motivic polylogarithms gives rise to certain special functions on $U_Z$ whose pullbacks along $\eta_p$ are values of $p$-adic polylogarithms. At any moment in time, we're only interested in some finitely generated portion $U_Z'$ of $U_Z$. To the extent that the given portion is spanned by motivic polylogarithms (as is typically implied by slight refinements of Goncharov's conjectures), we're able to use these to algorithmically construct coordinates on $U_Z'$. In turn, these give rise to free generators for the Lie algebra, which finally lead to a set of coordinates of the affine space $\rZ^1(U_Z, \Pi^\dR)^\Gm$ of $\Gm$-equivariant cocycles. 

With these in hand, the central problem we face here is to compute the scheme-theoretic image of the evaluation map
\[
\rZ^1\big( U_Z, \Pi^\dR\big)^\Gm \times U_Z
\xrightarrow{\ev}
\Pi^\dR \times U_Z
\]
\[
\ev(c,\gamma) = (c\gamma, \gamma)
\]
in terms of the given coordinates. When $\Pi$ is semisimple, cocycles are just homomorphisms, and this computation amounts to a purely geometric problem (and is referred to as the \textit{geometric step} in the computation). With $\Pi$ no longer semisimple, the cocycle condition now involves the action of $U_Z$ on $\Pi^\dR$, and so arithmetic aspects intrude upon the geometric step. Our main discovery in the present paper is that these can be managed algorithmically and computationally using the calculus of motivic multiple zeta values.

\subsection*{AI statement}
This paper contains no AI-generated text and the main results were obtained before AI became useful in mathematics. ChatGPT was used for proofreading.

\section{Background on Mixed Tate Chabauty--Kim}

We review the relevant technical background material. We assume familiarity with the basics of Kim's program \cite{KimP1,kim09}\footnote{See \cite{KimNonLinear}, \cite{KimFGDG}, \cite{KimGTDG}, and \cite{ChabautytoKim} for introductions to Kim's program.} and review the application of mixed Tate motives to the $S$-unit equation (as developed in \cite{HadianDuke,CKone,MTMUE,BrownIntegral,CDC:polylog1}).

Throughout this paper, $X=\Poneminusthreepoints$ over $\mathbb{Z}$.

\subsection{Free pro-unipotent groups}

We start by recalling some facts about free pro-unipotent groups and their associated Lie algebra and cocommutative Hopf algebra. The importance of these is twofold in that both the de Rham fundamental group of $\Poneminusthreepoints$ as well as the unipotent mixed Tate Galois group of a dense open subscheme $Z \subseteq \Spec(\bZ)$ are examples of free pro-unipotent groups.

\subsubsection{Profinite-dimensional vector spaces}
\label{sec:profinite-dimensional-vector-spaces}

Let $k$ be a field of characteristic zero. Let $\Vect_k$ be the category of all $k$-vector spaces (of arbitrary dimension) and let $\opnm{ProFinVect}_k$ be the category of \emph{profinite-dimensional} vector spaces. The latter is equivalently defined as either the pro-category of finite-dimensional vector spaces, or as the category of topological vector spaces which are isomorphic to a filtered limit of finite dimensional discrete vector spaces, with continuous linear maps as morphisms. We usually adopt the topological vector space viewpoint.
There is an equivalence of categories 
\begin{equation}
	\label{eq:vector-space-duality}
	\Vect_k^\mathrm{op} \simeq \opnm{ProFinVect}_k
\end{equation}
given as follows. In one direction, we send a $k$-vector space $V$ to $V^\lor = \Hom_{\Vect_k}(V, k)$ with topology induced by the isomorphism
\[
V^\lor \cong \lim_{\underset{\text{fin. dim.}}{U\subseteq V}} U^\lor.
\]
In the other direction, we send a profinite-dimensional vector space $W$ to its continuous dual $W^\lor = \Hom_{\opnm{ProFinVect}_k}(W, k)$. The equivalence~\eqref{eq:vector-space-duality} is monoidal if $\opnm{ProFinVect}_k$ is endowed with the completed tensor product:
\[ (\varprojlim_i V_i) \hat\otimes (\varprojlim_j W_j) \coloneqq \varprojlim_{i,j} (V_i \otimes W_j). \]

\subsubsection{Free pro-nilpotent Lie algebras and the Lyndon basis}
\label{sec:lyndon-basis}
By a \emph{nilpotent} Lie algebra over~$k$ we always mean a \emph{finite-dimensional} nilpotent Lie algebra. A \emph{pro-nilpotent Lie algebra} is a pro-object of the category of nilpotent Lie algebras, or equivalently a topological Lie algebra which is an inverse limit of nilpotent Lie algebras.
Thus, there is an equivalence between unipotent groups and nilpotent Lie algebras, and between pro-unipotent groups and pro-nilpotent Lie algebras.

The forgetful functor from pro-nilpotent Lie algebras to $\opnm{ProFinVect}_k$ admits a left adjoint, so we can talk about the \emph{free pro-nilpotent Lie algebra} $\opnm{Free}_{\opnm{pro-nil}}(V)$ on a profinite-dimensional vector space~$V$. It can be constructed as the subspace of primitive elements in the completed tensor algebra $TV = \prod_{n=0}^{\infty} T^n V$ (with respect to the completed tensor product), or as the inverse limit
\[ \opnm{Free}_{\opnm{pro-nil}}(V) = \varprojlim_i \varprojlim_n \opnm{Free}(V_i)/\fz^n \opnm{Free}(V_i), \]
with $V_i$ running through the finite-dimensional quotients of~$V$, $\opnm{Free}(V_i)$ denoting the ordinary free Lie algebra on~$V_i$, and $\fz^{\bullet}$ denoting the descending central series.

\begin{defn}
	\label{def:free-pronilpotent-lie-algebra}
	Let $\Sigma$ be a set. The \emph{free pro-nilpotent Lie algebra} on~$\Sigma$ over~$k$ is defined as the free pro-nilpotent Lie algebra on the profinite-dimensional vector space $k^{\Sigma}$:
	\[ \Free_{\opnm{pro-nil}}(\Sigma) \coloneqq \Free_{\opnm{pro-nil}}(k^{\Sigma}). \]
	We also denote this by $\fn(\Sigma)$.
\end{defn}

Thus, $\fn(\Sigma)$ is universal among pro-nilpotent Lie algebras $\fn$ with a continuous linear map $k^{\Sigma} \to \fn$. Here, $k^{\Sigma}$ carries the product topology, which makes it the inverse limit of the finite-dimensional discrete vector spaces $k^{\Sigma'}$ with $\Sigma'$ running through the finite subsets of~$\Sigma$. We identify elements of $\Sigma$ with their image in $\fn(\Sigma)$. If $\Free(\Sigma)$ denotes the ordinary free Lie algebra on~$\Sigma$, then $\fn(\Sigma)$ is given by
\[ \fn(\Sigma) = \varprojlim_{\Sigma' \subseteq \Sigma \text{ finite}} \varprojlim_n  \Free(\Sigma')/\fz^n \Free(\Sigma'). \]

We describe a basis of $\fn(\Sigma)$, i.e., a linearly independent subset generating each of the finite-dimensional quotients in this presentation. A generating set is given by all nested Lie brackets in the generators but these satisfy many relations. The \emph{Lyndon basis} is a particular subset which is linearly independent. Its definition depends on a choice of total ordering on~$\Sigma$. 

\begin{defn}
	\label{def: Lyndon word}
	Let $(\Sigma, <)$ be a totally ordered set. A word over~$\Sigma$ is a \emph{Lyndon word} if it is nonempty and lexicographically strictly smaller than all its nontrivial cyclic rotations.
\end{defn}

\begin{example}
	\label{lyndon-words-of-length-le-4}
	If $\Sigma = \{e_0,e_1\}$ with $e_0 < e_1$, then the Lyndon words of length at most 4 are
	\begin{align*}
		e_0, \quad e_1, \quad e_0e_1, \quad e_0e_0e_1, \quad e_0e_1e_1, \quad e_0e_0e_0e_1, \quad e_0e_0e_1e_1, \quad e_0e_1e_1e_1.
	\end{align*}
\end{example}

To each Lyndon word~$w$ over $\Sigma$ we associate an element $\lambda(w)$ of the free pro-nilpotent Lie algebra~$\fn(\Sigma)$. If $w = \sigma$ is a single letter, then $\lambda(\sigma) = \sigma$. Otherwise, we recursively define $\lambda(w) = [\lambda(u), \lambda(v)]$, where $w = uv$ is the \emph{standard factorization} of~$w$, i.e., $v$ is the longest proper suffix of~$w$ which is a Lyndon word, in which case the prefix~$u$ is also a Lyndon word. For example, if again $\Sigma = \{e_0,e_1\}$:
\[ \lambda(e_0e_0 e_1 e_1) = [e_0, \lambda(e_0e_1e_1)] = [e_0, [\lambda(e_0e_1), e_1]] = [e_0,[[e_0,e_1],e_1]]. \]
We call $\lambda(w)$ the \emph{standard bracketing} of~$w$.

\begin{prop}
	Let $\Sigma$ be a totally ordered set. The elements $\lambda(w)$ with $w$ running through the Lyndon words over~$\Sigma$ form a basis of the free pro-nilpotent Lie algebra on~$\Sigma$.
\end{prop}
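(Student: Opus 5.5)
The plan is to reduce the statement to the classical theorem for ordinary free Lie algebras on a finite alphabet and then pass to the limit. The classical fact (Chen--Fox--Lyndon; see Reutenauer's \emph{Free Lie Algebras} or Bourbaki, Lie, Ch.~II) is the following. For a finite totally ordered set $\Sigma'$, the standard bracketings $\lambda(w)$, with $w$ running over Lyndon words in $\Sigma'$, form a basis of $\Free(\Sigma')$. Moreover, each $\lambda(w)$ is homogeneous of degree $|w|$, i.e.\ it lies in the degree-$|w|$ graded piece $\Free(\Sigma')_{|w|}$. I would either quote this or sketch it in two steps. First, expanding $\lambda(w)$ in the tensor algebra gives $w$ plus a combination of lexicographically larger words; this yields linear independence. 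Second, the Poincaré--Birkhoff--Witt theorem combined with the unique factorization of every word as a nonincreasing product of Lyndon words shows that the number of Lyndon words of each length equals the dimension of the corresponding graded piece of $\Free(\Sigma')$, which yields spanning.

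The next step uses the grading. Since $\Free(\Sigma')$ is graded and the lower central series satisfies $\fz^n\Free(\Sigma') = \bigoplus_{d\ge n}\Free(\Sigma')_d$, the images of $\lambda(w)$ with $|w|<n$ form a basis of $\Free(\Sigma')/\fz^n\Free(\Sigma')$. The remaining $\lambda(w)$, with $|w|\ge n$, map to zero there. Hence for every finite quotient $\Free(\Sigma')/\fz^n$ appearing in the limit presentation of $\fn(\Sigma)$, the relevant Lyndon elements give a basis.

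It remains to check compatibility with the transition maps in the double limit. For $\Sigma''\subseteq\Sigma'$ finite, the projection $\Free(\Sigma')\to\Free(\Sigma'')$ kills every letter outside $\Sigma''$. A Lyndon word over $\Sigma'$ that involves a letter outside $\Sigma''$ therefore has standard bracketing mapping to $0$, because every bracket is multilinear and that letter occurs in it. A Lyndon word over $\Sigma''$ maps to its own standard bracketing, since Lyndonness and the standard factorization depend only on the word and the induced order. So the family $\{\lambda(w)\}$ is compatible across the inverse system, and each element of it is nonzero in only a cofinal collection of quotients.

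Combining these, each finite-dimensional quotient has a basis given by the images of the $\lambda(w)$ (the nonzero images being distinct and linearly independent), which is exactly the sense of \emph{basis} defined in the text. I would also observe that every element of $\fn(\Sigma)$ is then a unique convergent sum $\sum_w c_w\lambda(w)$: for each finite $\Sigma'$ and each $n$, only finitely many words have nonzero image, so the sum converges in the inverse-limit topology. The main obstacle is the classical finite-alphabet basis theorem, in particular the spanning/counting argument. Everything else is bookkeeping with the grading and the limit, and I expect to cite that theorem rather than reprove it.
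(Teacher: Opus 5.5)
Your proposal is correct and takes the paper's route: the paper's proof is only a citation of the classical Lyndon basis theorem (Reutenauer, \S5.1), and you cite the same result. Your extra steps, namely the passage to the limit via $\fz^n\Free(\Sigma')=\bigoplus_{d\ge n}\Free(\Sigma')_d$ and the compatibility of standard bracketings with the projections $\Free(\Sigma')\to\Free(\Sigma'')$, are valid and simply spell out what the paper leaves implicit.
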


\begin{proof}
	See for example \cite[§5.1]{reutenauer93}.
\end{proof}

\subsubsection{Free unipotent groups}
The free pro-nilpotent Lie algebra~$\fn(\Sigma)$ on a set~$\Sigma$ is the Lie algebra of the free pro-unipotent group~$U(\Sigma)$ on the elements $\exp(\sigma) \in U(\Sigma)(k)$ for $\sigma \in \Sigma$. The universal property of~$U(\Sigma)$ is that for any unipotent group~$V$ and any map $f\colon \Sigma \to V(k)$ mapping all but finitely many elements to the identity, there exists a unique homomorphism of pro-algebraic groups $g\colon U(\Sigma) \to V$ such that $g(\exp(\sigma)) = f(\sigma)$ for $\sigma \in \Sigma$.
The exponential map $\exp\colon \fn(\Sigma) \to U(\Sigma)$ is a pro-algebraic map over~$k$ when $\fn(\Sigma)$ is viewed as a pro-affine space in the obvious way. The exponential map is an isomorphism with inverse the logarithm $\log\colon U(\Sigma) \to \fn(\Sigma)$.

\subsubsection{The cocommutative Hopf algebra of an affine group scheme}
\label{260826A}

If $G$ is an affine group scheme over~$k$, then $\mathcal{O}(G)$ is a commutative Hopf algebra, i.e.\ a commutative Hopf algebra object in $\Vect_k$; its dual $\mathcal{H}(G) \coloneqq \cO(G)^{\vee}$ is thus a cocommutative Hopf algebra object in $\opnm{ProFinVect}_k$ via~\eqref{eq:vector-space-duality}.

\begin{sdefinition}
	\label{260826B}
	We define a \emph{complete Hopf algebra} to be a Hopf algebra object of $\opnm{ProFinVect}_k$. For $G$ an affine group scheme over~$k$, the \emph{(complete) cocommutative Hopf algebra associated to $G$} (also called the \emph{convolution algebra} in the literature) is defined by 
	\[ \Hh(G) := \Oo(G)^\lor. \]
\end{sdefinition}

Note that the formation $G \mapsto \cH(G)$ commutes with inverse limits, so if $G = \varprojlim_i G_i$ with linear algebraic groups $G_i$, then $\cH(G) = \varprojlim_i \cH(G_i)$. 

\begin{sremark}
	If $G \to \Spec k$ is the finite constant group scheme associated to an abstract finite group, then $\Hh(G) = k[G]$ is just the group algebra. 
	By contrast, if $G = \varprojlim_i G_i$ is an inverse limit of unipotent groups, then $\Hh(G)$ is canonically isomorphic to the completed universal enveloping algebra
	\[
	\Hh(G) \cong
	\varprojlim_i \varprojlim_n \Uu \Lie(G_i)/I_i^n,
	\]
	where $I_i$ denotes the augmentation ideal of $\Uu \Lie G_i$ (see appendix A of Deligne--Goncharov \cite{DelGon05}). This isomorphism plays an important role elsewhere in Chabauty--Kim theory, but will not intervene here. 
\end{sremark}

\subsubsection{The cocommutative Hopf algebra associated to a free pro-unipotent group}
\label{sec:hopf-algebra}
Let~$\Sigma$ be a set and $k$ a field of characteristic zero. Let $\fn = \fn(\Sigma)$ be the free pro-nilpotent Lie algebra on~$\Sigma$, and $U = U(\Sigma)$ the corresponding free pro-unipotent group over~$k$. We can write $U = \varprojlim_i U_i$ as a limit of finite-dimensional unipotent groups by taking the limit over finite subsets $\Sigma' \subseteq \Sigma$ and taking quotients along the descending central series. We describe its associated complete Hopf algebra 
\[ \cH(U) = \varprojlim_i \varprojlim_{A \subseteq \cO(U_i) \text{ f.d.}} A^{\vee}. \]
This is canonically isomorphic to the non-commutative power series ring
\[ \cH(U) = k\llangle \Sigma \rrangle, \]
whose underlying vector space carries a profinite-dimensional topology by writing $k\llangle \Sigma \rrangle$ as a limit over finite subsets of $\Sigma$ and quotienting by powers of the augmentation ideal. Elements of $k\llangle \Sigma \rrangle$ have the form $\sum_w a_w \, w$ with $a_w \in k$ and $w$ running through the words over~$\Sigma$. This includes elements like $\sum_{\sigma \in \Sigma} \sigma$ if $\Sigma$ is infinite. 
The coproduct on $\cH(U)$ is uniquely determined by 
\[ \Delta(\sigma) = 1 \otimes \sigma + \sigma \otimes 1, \]
for $\sigma \in \Sigma$, the counit $\varepsilon\colon \cH(U) \to k$ sends a power series to its constant coefficient, and the antipode is given on monomials by
\[ S(\sigma_1\cdots \sigma_n) = (-1)^n \sigma_n \cdots \sigma_1. \]

When $R$ is a $k$-algebra and $R \, \hat \otimes \, -$ denotes the tensor product in the sense of pro-objects\footnote{Over a field, it is safe to identify the pro-category of finite-dimensional vector spaces with the category of profinite-dimensional topological vector spaces, but over more general rings, it is better to view $R\llangle \Sigma\rrangle$ as shorthand for the pro-object given by taking the limit over finite subsets of~$\Sigma$ and truncating by powers of the augmentation ideal.}, we have $R \, \hat\otimes \, \cH(U) = R\llangle \Sigma \rrangle$ and its elements are $k$-linear maps $\cO(U) \to R$. The $R$-valued points $U(R)$ correspond to $k$-linear maps which are in addition multiplicative, so $U(R)$ can be identified with the \emph{grouplike} elements in $R\llangle \Sigma\rrangle$: elements $u$ satisfying $\varepsilon(u) = 1$ and $\Delta u = u \otimes u$. We write this as
\[ U(R) = R\llangle \Sigma\rrangle^{\gp}. \]

Similarly, the Lie algebra $\fn \, \hat \otimes \, R$ is the vector space of derivations $D\colon \cO(U) \to R$, i.e., $k$-linear maps satisfying in addition the rule $D(1) = 0$ and $D(fg) = \varepsilon(f) D(g) + \varepsilon(g) D(f)$ for $f,g \in \cO(U)$, where $\varepsilon$ is the augmentation of~$\cO(U)$. Thus, $\fn\, \hat\otimes\, R$ can be identified with the set of \emph{Lie-like} (or \emph{primitive}) elements in $R\llangle\Sigma\rrangle$, i.e., those elements~$X$ satisfying $\varepsilon(X) = 0$ and $\Delta X = 1 \otimes X + X \otimes 1$:
\[ \fn \, \hat\otimes \, R = R\llangle \Sigma\rrangle^{\Lie}. \]
In this way, the cocommutative Hopf algebra $\cH(U)$ serves as a common superstructure of both the group $U$ and its Lie algebra $\fn$. The group operation on $U(R)$ corresponds to multiplication of grouplike elements in $\cH(U)$; the Lie bracket on $\fn \, \hat\otimes\, R$ corresponds to the commutator bracket $[a,b] = ab - ba$ on Lie-like elements in $\cH(U)$.

The ring of functions $\cO(U)$ can be recovered from $\cH(U)$ as the (topological) dual space
\begin{equation}
	\label{eq:functions-dual-to-hopf-algebra}
	\cO(U) = \cH(U)^\vee.
\end{equation}
This means that elements of $\cO(U)$ can be identified with $k$-linear maps $k\llangle\Sigma\rrangle \to k$ vanishing on all but finitely many monomials. A basis of $\cO(U)$ is given by the coefficient extraction functionals.

\begin{defn}
	\label{def:coeff-extraction-functions}
	Given a word $w$ in~$\Sigma$, the \emph{coefficient extraction functional}
	\[ f_w\colon k\llangle \Sigma \rrangle \to k \]
	is defined as the linear map sending a non-commutative power series to its $w$-coefficient. We view $f_w$ as an element of~$\cO(U)$ via~\eqref{eq:functions-dual-to-hopf-algebra}.
\end{defn}

The product of two coefficient extraction functionals is given by the \emph{shuffle product}:
\begin{equation}
	\label{eq:shuffle-product}
	f_{w_1} f_{w_2} = \sum_{\pi \in \Sh(|w_1|,|w_2|)} f_{\pi(w_1 w_2)}
\end{equation}
for all words $w_1,w_2$ on~$\Sigma$, where $\Sh(n,m) \subseteq S_{n+m}$ denotes the set of all permutations satisfying $\pi^{-1}(1) < \ldots < \pi^{-1}(n)$ and $\pi^{-1}(n+1) < \ldots < \pi^{-1}(n+m)$, and $\pi(\sigma_1\cdots\sigma_{n+m}) \coloneqq \sigma_{\pi(1)}\cdots\sigma_{\pi(n+m)}$ for $\sigma_i \in \Sigma$.
It follows that $\cO(U)$ is the free $k$-algebra on the elements $f_w$ with $w$ running through the Lyndon words over~$\Sigma$, for any choice of total ordering on~$\Sigma$ \cite[Theorem~6.1]{reutenauer93}.

\subsection{The de Rham fundamental group of~$X$}
\label{sec: de Rham fundamental group}

\subsubsection{Basepoints}
By a basepoint on~$X = \bP^1\smallsetminus \{0,1,\infty\}$ we mean either a point in $X(\bQ)$ or a tangential basepoint in the sense of \cite[§15]{Deligne89}, i.e., a nonzero tangent vector at one of the three cusps. If $t$ denotes the standard coordinate on $\bA^1$, then for $\lambda \in \bQ^\times$ we denote by $\vec{\lambda}_0$ the tangential basepoint at~0 with $\mathrm{d}t(\vec{\lambda}_0) = \lambda$, and by $\vec{\lambda}_1$ the one at~1 with $\mathrm{d}t(\vec{\lambda}_1) = \lambda$. We will often use the particular tangential basepoints $\vec{1}_0$ and $(-\vec{1})_1$, which usually we write simply as~$0$ and~$1$.

\subsubsection{De Rham fundamental group and path spaces}
Let $\Un(X)$ be the $\bQ$-linear Tannakian category of unipotent vector bundles with flat connection on $X_{\bQ}$. Let $x$ and $y$ be two basepoints. They define fiber functors $\Fib_x, \Fib_y \colon \Un(X) \to \Vect_{\bQ}$ and the unipotent \emph{de Rham path space} $\pi_1^{\dR}(X; x,y)$ is defined as the scheme of tensor isomorphisms
\[ \pi_1^{\dR}(X; x,y) \coloneqq \underline{\Isom}^{\otimes}(\Fib_x, \Fib_y). \]
In the case $x = y$ it is denoted by $\pi_1^{\dR}(X,x)$ and called the unipotent \emph{de Rham fundamental group} of~$X$. It is a pro-unipotent group over $\bQ$. When we write $\pi_1^{\dR}(X)$ without specified basepoint, we are referring to the tangential basepoint $0 = \vec{1}_0$.

\subsubsection{Canonical de Rham paths}
\label{sec:canonical-paths}

For any two basepoints $x$, $y$ on~$X$ there is a canonical de Rham path 
\begin{equation}
	\label{eq:canonical-path}
	\gamma_{x,y}^{\dR} \in \pi_1^{\dR}(X; x,y)(\bQ).
\end{equation}
It can be constructed as follows. Let $\overline{X} \coloneqq \bP^1$ be the smooth completion of $X$, and let $D \coloneqq \{0,1,\infty\}$ be the boundary divisor. By Deligne's extension theorem, the category $\Un(X)$ is canonically equivalent to the category $\Un(\overline{X}, D)$ of unipotent vector bundles with flat log connection on $(\overline{X}, D)$. Since every unipotent vector bundle on $\bP^1$ is trivial \cite[§12.4]{Deligne89}, the global sections functor on $\Un(\overline{X},D)$ is compatible with the tensor product, hence one has a canonical fiber functor
\[ \Fib_{\can}\colon \Un(X) \to \Vect_{\bQ}, \; \cE \mapsto \rH^0(\overline{X}, \cE_{\can}), \]
where $\cE_{\can}$ denotes the canonical extension of~$\cE$ to $\overline{X}$. For any two basepoints $x$, $y$ on~$X$, the fiber functors $\Fib_x$ and $\Fib_y$ are canonically isomorphic to $\Fib_{\can}$, and the composition of the isomorphisms
\[ \Fib_x \cong \Fib_{\can} \cong \Fib_y \]
defines the canonical de Rham path \eqref{eq:canonical-path}.

The path $\gamma_{x,y}^{\dR}$ can also be described as the unique element of the zeroth Hodge filtration subscheme $F^0 \pi_1^{\dR}(X;x,y)$. This follows from the fact that $F^0 \pi_1^{\dR}(X;x,y)$ consists of those Tannakian paths on $\Un(X) \cong \Un(\overline{X},D)$ which come from the category of unipotent vector bundles on $\overline{X}$ without connection, see \cite[Lemma~4.2.1]{betts:effective}. As a consequence, the map 
\begin{equation}
	\label{eq:hodge-filtered-map}
	(\gamma_{x,y}^{\dR})^{\sharp} \colon \cO(\pi_1^{\dR}(X;x,y)) \to \bQ
\end{equation}
is Hodge-filtered, i.e., vanishes on $F^1 \cO(\pi_1^{\dR}(X;x,y))$.

\subsubsection{Structure of the fundamental group}
\label{sec:structure-of-fundamental-group}

The fundamental group $\pi_1^{\dR}(X)$ has the structure of a free pro-unipotent group on two generators $\gamma_0$, $\gamma_1$. They can be constructed as follows. Denote by $T^\times_0 \cong \bG_m$ the tangent space of $\bP^1$ at~$0$ without the origin. Deligne's theory of tangential basepoints \cite[§15]{Deligne89} yields a functor $\Un(X) \to \Un(T_0^{\times})$, which upon taking Tannaka groups yields the \emph{local monodromy homomorphism}
\begin{equation}
	\label{eq:monodromy-homomorphism}
	\mu_0\colon \pi_1^{\dR}(T_0^*, 1) \to \pi_1^{\dR}(X,0).
\end{equation}
The de Rham fundamental group of $T_0^{\times} = \bG_m$ is the 1-dimensional vector group $\bG_a$ with Lie algebra $\rH_1^{\dR}(\bG_m)$, the dual space of $\rH^1_{\dR}(\bG_m) = \bQ [\rd t/t]$ \cite[§14]{Deligne89}. The image of the canonical generator of $\pi_1^{\dR}(T_0^{\times}, 1)$ under the local monodromy homomorphism~\eqref{eq:monodromy-homomorphism} defines an element $\gamma_0 \in \pi_1^{\dR}(X,0)(\bQ)$, which can be thought of as an infinitesimal loop around~$0$. The same construction for the tangential basepoint $(-\vec{1})_1$ near~1 yields an element of $\pi_1^{\dR}(X,1)(\bQ)$ and we define $\gamma_1'$ to be its inverse. Transporting it along the canonical path $\gamma_{0,1}^{\dR} \in \pi_1^{\dR}(X;0,1)(\bQ)$ yields the loop 
\begin{equation}
	\label{eq:gamma1-def}
	\gamma_1 \coloneqq (\gamma_{0,1}^{\dR})^{-1} \gamma_1' \gamma_{0,1}^{\dR} \in \pi_1^{\dR}(X,0)(\bQ).
\end{equation}
It follows from \cite[Proposition~12.10]{Deligne89} that $\pi_1^{\dR}(X)$ is a free pro-unipotent group on the generators~$\gamma_0$, $\gamma_1$. Equivalently, the Lie algebra $\fn(X) \coloneqq \Lie \pi_1^{\dR}(X)$ is free pro-nilpotent on the two generators $e_0 \coloneqq \log(\gamma_0)$ and $e_1 \coloneqq \log(\gamma_1)$.

\subsubsection{Matrix coefficients}
Let $x$ and $y$ be two basepoints. By the general Tannakian formalism, functions on the path space $\pi_1^{\dR}(X; x,y)$ are given by matrix coefficients. We thank Alex Betts for explaining this to us and refer to \cite[§2]{brown:motivic-periods} and \cite[§4.7]{deligne:categories-tannakiennes} for further details. A \emph{matrix coefficient} is a triple 
\[ [\cE, v, \phi] \] 
where $\cE$ is an object of~$\Un(X)$, $v \in \Fib_x(\cE)$, and $\phi\colon \Fib_y(\cE) \to \bQ$ is a $\bQ$-linear map. It represents the function on the path space sending $\gamma \in \pi_1^{\dR}(X; x,y)(R)$, valued in some $\bQ$-algebra, to $(\phi \otimes R)(\gamma_{\cE}(v)) \in R$, in other words to the image of $v \in \Fib_x(\cE)$ under the composition
\[ \Fib_x(\cE) \otimes R \overset{\gamma_{\cE}}{\lto} \Fib_y(\cE) \otimes R \overset{\phi \otimes R}{\lto} R. \]

We say that two matrix coefficients $[\cE,v,\phi]$ and $[\cE',v',\phi']$ are \emph{basic-equivalent} if there exists a morphism $f\colon \cE \to \cE'$ in $\Un(X)$ such that $v' = f_*(v)$ and $\phi = f^*(\phi')$. The collection of all matrix coefficients modulo the equivalence relation generated by basic equivalence has the structure of a $\bQ$-algebra, with addition and multiplication given by
\begin{align*}
	[\cE,v,\phi] + [\cE',v',\phi'] &= [\cE\oplus \cE', (v,v'), \phi+\phi'],\\
	[\cE,v,\phi] \cdot [\cE',v',\phi'] &= [\cE\otimes \cE', v \otimes v', \phi\otimes \phi'].
\end{align*}
The significance of this construction is that $\pi_1^{\dR}(X;x,y)$ is isomorphic as a $\bQ$-scheme to the spectrum of this algebra of matrix coefficients. (This construction works for arbitrary Tannakian categories; it is not specific to $\Un(X)$.) This can be concisely summarized in categorical language by saying that the ring of functions on $\pi_1^{\dR}(X;x,y)$ is given by the \emph{coend}
\[ \cO(\pi_1^{\dR}(X;x,y)) = \int^{\cE \in \Un(X)} \Fib_x(\cE) \otimes \Fib_y(\cE)^\vee. \]

The group(oid) operations on the de Rham fundamental group and path spaces can also be expressed in terms of matrix coefficients. Given three basepoints $x,y,z$ of~$X$, the path composition map $(\gamma_1,\gamma_2) \mapsto \gamma_1 \gamma_2$
\[ \pi_1^{\dR}(X;y,z) \times \pi_1^{\dR}(X;x,y) \to \pi_1^{\dR}(X;x,z) \]
corresponds in terms of matrix coefficients to the map
\begin{align*}
	\int^{\cE} \Fib_x(\cE) \otimes \Fib_z(\cE)^\vee &\to \Bigl(\int^{\cF} \Fib_y(\cF) \otimes \Fib_z(\cF)^\vee\Bigr) \otimes \Bigl(\int^{\cG} \Fib_x(\cG) \otimes \Fib_y(\cG)^\vee \Bigr),\\
	[\cE, v_x, \phi_z] &\mapsto \sum_{i=1}^n [\cE, b_i, \phi_z] \otimes [\cE, v_x, b_i^\vee],
\end{align*}
where $(b_i)_{i=1,\ldots,n}$ is a basis of $\Fib_y(\cE)$ and $(b_i^\vee)_{i=1,\ldots,n}$ is the dual basis. (It is easily checked that this is well-defined.)
The path inversion map $\pi_1^{\dR}(X;x,y) \to \pi_1^{\dR}(X;y,x)$, $\gamma \mapsto \gamma^{-1}$ corresponds in terms of matrix coefficients to
\begin{align*}
	\int^{\cE} \Fib_y(\cE) \otimes \Fib_x(\cE)^\vee &\to \int^{\cF} \Fib_x(\cF) \otimes \Fib_y(\cF)^\vee, \\
	[\cE, v_y, \phi_x] &\mapsto [\cE^\vee, \phi_x^{\vee}, v_y^{\vee}].
\end{align*}

\subsubsection{Tannakian iterated integrals}
\label{sec:tannakian-iterated-integrals}
Let $\underline{\omega} = \omega_1 \cdots \omega_n$ be a word in the differential forms $\mathrm{d}t/t$ and $\mathrm{d}t/(1-t)$. To $\underline{\omega}$ we associate an object $\cE_{\underline{\omega}}$ of $\Un(X)$, namely the trivial vector bundle $\cO_X^{n+1}$ with connection matrix
\begin{equation}
	\label{eq:connection_matrix}
	\mathrm{d} - \nabla = \begin{pmatrix}
		0 & \omega_1 & 0 & \ldots & 0 & 0 \\
		& 0 & \omega_2 & \ldots & 0 & 0 \\
		&   & 0 & \ldots  & 0 & 0 \\
		& & & \ddots & \vdots & \vdots \\
		& & & & 0 & \omega_n \\
		& & & & & 0
	\end{pmatrix}.
\end{equation}
For any basepoint~$x$ we have a canonical isomorphism $\Fib_x(\cE_{\underline{\omega}}) \cong \bQ^{n+1}$ since the underlying vector bundle of~$\cE_{\underline{\omega}}$ is trivial. (In the case where~$x$ is a tangential basepoint, this uses also the fact that the differential forms have logarithmic poles at the cusps, so Deligne's canonical extension of $\cE_{\underline{\omega}}$ to~$\bP^1$ also has trivial underlying vector bundle.) For any two basepoints~$x$ and~$y$ and any path $\gamma \in \pi_1^{\dR}(X; x,y)(R)$ valued in some $\bQ$-algebra~$R$, the Tannakian iterated integral
\[ \int_{\gamma} \omega_1 \cdots \omega_n \in R \]
is defined as the matrix coefficient $[\cE_{\underline{\omega}}, e_{n+1}, e_1^\vee]$ evaluated at~$\gamma$, i.e.\ the value of $e_{n+1} = (0,\ldots,0,1) \in R^{n+1}$ under the composition
\[ R^{n+1} = \Fib_x(\cE_{\underline{\omega}}) \otimes R  \xrightarrow{\gamma_{\cE_{\underline{\omega}}}} \Fib_y(\cE_{\underline{\omega}}) \otimes R = R^{n+1} \overset{e_1^\vee}{\lto} R. \]

Tannakian iterated integrals satisfy the following properties:
\begin{enumerate}
	\item \emph{Empty word.} For $\underline{\omega} = \emptyset$ the empty word and any path $\gamma \in \pi_1^{\dR}(X;x,y)$ we have
	\begin{equation*}
		\label{eq:integral_empty}
		\int_{\gamma} \emptyset = 1.
	\end{equation*}
	\item \emph{Constant paths.} For $1_x \in \pi_1^{\dR}(X,x)(\bQ)$ the constant path and $n \geq 1$ we have
	\begin{equation*}
		\label{eq:integral_constant}
		\int_{1_x} \omega_1 \cdots \omega_n = 0.
	\end{equation*}
	\item \emph{Path composition.} For any three basepoints $x,y,z$ and for any two paths $\gamma_1 \in \pi_1^{\dR}(X;y,z)$ and $\gamma_2 \in \pi_1^{\dR}(X;x,y)$ we have
	\begin{equation*}
		\label{eq:integral_composition}
		\int_{\gamma_1 \gamma_2} \omega_1 \cdots \omega_n = \sum_{i=0}^n \left(\int_{\gamma_1} \omega_1 \cdots \omega_i \right) \left(\int_{\gamma_2} \omega_{i+1}\cdots \omega_n \right).
	\end{equation*}
	\item \emph{Inversion.} For $\gamma \in \pi_1^{\dR}(X;x,y)$ with inverse $\gamma^{-1} \in \pi_1^{\dR}(X;y,x)$ we have
	\begin{equation*}
		\label{eq:integral_inversion}
		\int_{\gamma^{-1}} \omega_1 \cdots \omega_n = (-1)^n \int_{\gamma} \omega_n \cdots \omega_1.
	\end{equation*}
	\item \emph{Shuffle product formula.} For $\gamma \in \pi_1^{\dR}(X;x,y)$ and two sequences $\omega_1\cdots\omega_n$ and $\omega_{n+1}\cdots \omega_{n+m}$ of differential forms we have
	\begin{equation*}
		\left(\int_{\gamma} \omega_1\cdots\omega_n\right) \left(\int_{\gamma} \omega_{n+1}\cdots\omega_{n+m}\right) = \sum_{\pi \in \Sh(n,m)} \int_{\gamma} \omega_{\pi(1)}\cdots \omega_{\pi(n+m)}
	\end{equation*}
	where the sum is over shuffle permutations $\pi \in \Sh(n,m) \subseteq S_{n+m}$, i.e., permutations satisfying $\pi^{-1}(1) < \ldots < \pi^{-1}(n)$ and $\pi^{-1}(n+1) < \ldots < \pi^{-1}(n+m)$.
\end{enumerate}

Let us sketch just the proof of the path inversion formula, to give an idea of how these properties can be proved.

\begin{proof}[Proof of path inversion formula]
	The inversion map
	\[
	\pi_1^{\dR}(X;x,y) \to \pi_1^{\dR}(X;y,x)
	\] 
	corresponds to the antipode $S\colon \cO(\pi_1^{\dR}(X;y,x)) \to \cO(\pi_1^{\dR}(X;x,y))$ which is given by $[\cE,v,\phi] \mapsto [\cE^\vee, \phi^\vee, v^\vee]$ in terms of matrix coefficients. Given $\cE_{\underline{\omega}} = \cO_X^{n+1}$ with connection matrix~\eqref{eq:connection_matrix}, the dual object in $\Un(X)$ is $\cE_{\underline{\omega}}^\vee = \cO_X^{n+1}$ with connection matrix
	\[ 
	\begin{pmatrix}
		0 &  & & & & \\
		-\omega_1 & 0 & & & & \\
		0 & -\omega_2 & 0 & & & \\
		\vdots & \vdots & \vdots & \ddots & & \\
		0 & 0 & 0 & \cdots & 0 & \\
		0 & 0 & 0 & \cdots & -\omega_n & 0
	\end{pmatrix}.
	\]
	Reversing the order of coordinates transforms this into
	\[
	\begin{pmatrix}
		0 & -\omega_n & 0 & \ldots & 0 & 0 \\
		& 0 & -\omega_{n-1} & \ldots & 0 & 0 \\
		&   & 0 & \ldots  & 0 & 0 \\
		& & & \ddots & \vdots & \vdots \\
		& & & & 0 & -\omega_1 \\
		& & & & & 0
	\end{pmatrix},
	\]
	so we find that
	\[ \int_{\gamma^{-1}} \omega_1 \cdots \omega_n = \int_\gamma (-\omega_n)\cdots(-\omega_1) = (-1)^n \int_\gamma \omega_n \cdots \omega_1 \]
	as claimed.
\end{proof}

We note the following property of the canonical de Rham path $\gamma_{x,y}^{\dR} \in \pi_1^{\dR}(X;x,y)(\bQ)$ (§\ref{sec:canonical-paths}).

\begin{lemma}
	\label{integral-over-canonical-de-rham-path}
	If $\underline{\omega} = \omega_1\cdots\omega_n$ is not the empty word then
	\[ \int_{\gamma_{x,y}^{\dR}} \omega_1\cdots \omega_n = 0. \]
\end{lemma}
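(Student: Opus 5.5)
The plan is to use the description of $\gamma_{x,y}^{\dR}$ via the canonical fiber functor $\Fib_{\can}$ from §\ref{sec:canonical-paths}. By definition, the iterated integral $\int_{\gamma_{x,y}^{\dR}} \omega_1\cdots\omega_n$ is the matrix coefficient $[\cE_{\underline{\omega}}, e_{n+1}, e_1^\vee]$ evaluated at the canonical path. So I need the isomorphism $\Fib_x(\cE_{\underline\omega}) \to \Fib_y(\cE_{\underline\omega})$ obtained by composing $\Fib_x \cong \Fib_{\can} \cong \Fib_y$.

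The key observation concerns the canonical extension of $\cE_{\underline{\omega}}$. It is the trivial bundle $\cO_{\bP^1}^{n+1}$ with the same log connection, because the forms $\mathrm{d}t/t$ and $\mathrm{d}t/(1-t)$ have logarithmic poles with nilpotent (strictly upper triangular) residues. Its global sections are therefore the constant vectors $\bQ^{n+1}$. For each basepoint $z$, whether an honest rational point or a tangential basepoint, the isomorphism $\Fib_{\can}(\cE_{\underline\omega}) = \rH^0(\bP^1, \cO^{n+1}) \to \Fib_z(\cE_{\underline\omega}) = \bQ^{n+1}$ is evaluation of a constant section, i.e.\ the identity in the standard trivialization. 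For a tangential basepoint, the fiber of the canonical extension at the cusp, with its given trivialization, is identified with $\Fib_z$; I would cite Deligne \cite[§15]{Deligne89} for this, noting that the trivializations in question are exactly the ones used to define the iterated integrals. Composing these identifications, $\gamma^{\dR}_{x,y}$ acts on $\bQ^{n+1}$ as the identity. Hence $e_1^\vee(e_{n+1}) = 0$ whenever $n \ge 1$.

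As an alternative and cross-check, I would use the Hodge-filtration characterization \eqref{eq:hodge-filtered-map}. The function $\int \omega_1\cdots\omega_n$ lies in $F^n\cO(\pi_1^{\dR}(X;x,y))$, since each $\omega_i$ lies in $F^1$ of $\rH^1_{\dR}$ and $\cE_{\underline\omega}$ carries the Hodge filtration for which $e_{n+1}$ sits in $F^0$ and $e_1^\vee$ has weight raised by $n$. For $n\geq1$ it therefore lies in $F^1$, so $(\gamma_{x,y}^{\dR})^\sharp$ kills it.

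The main obstacle is justifying that the identifications $\Fib_z \cong \Fib_{\can}$ for tangential basepoints are compatible with the standard trivializations $\Fib_z(\cE_{\underline\omega})\cong\bQ^{n+1}$ used in §\ref{sec:tannakian-iterated-integrals}. I would argue this directly from the construction of the tangential fiber functor as the fiber of the canonical extension at the cusp. Since this is precisely how the trivialization was asserted in the parenthetical remark there, no additional twisting by the residue should occur.
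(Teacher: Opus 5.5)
Your main argument is the same as the paper's proof. The canonical extension of $\cE_{\underline\omega}$ is the trivial bundle $\cO_{\bP^1}^{n+1}$ with the same log connection, so the composite $\Fib_x \cong \Fib_{\can} \cong \Fib_y$ is the identity on $\bQ^{n+1}$ and the first coordinate of the image of $e_{n+1}$ vanishes when $n \geq 1$. Your Hodge-filtration cross-check is consistent with the paper's remark that $(\gamma_{x,y}^{\dR})^\sharp$ vanishes on $F^1$, but the paper's proof does not need it.
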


\begin{proof}
	Deligne's extension of $\cE_{\underline{\omega}}$ to a unipotent vector bundle $\cE_{\underline{\omega},\can}$ on $\overline{X} = \bP^1$ with flat logarithmic connection is still the trivial vector bundle $\cO_{\overline{X}}^{n+1}$ and has the same connection matrix~\eqref{eq:connection_matrix} since the differential forms already have logarithmic poles at the cusps. The canonical fiber functor $\Fib_{\can}$ applied to $\cE_{\underline{\omega}}$ is simply
	\[ \Fib_{\can}(\cE_{\underline{\omega}}) = \rH^0(\overline{X}, \cE_{\underline{\omega},\can}) = \rH^0(\overline{X}, \cO_{\overline{X}}^{n+1}) = \bQ^{n+1} \]
	and the composition
	\[ (\gamma_{x,y}^{\dR})_{\cE_{\underline{\omega}}} \colon \bQ^{n+1} = \Fib_x(\cE_{\underline{\omega}}) \cong \Fib_{\can}(\cE_{\underline{\omega}}) \cong \Fib_y(\cE_{\underline{\omega}}) = \bQ^{n+1} \]
	is the identity on~$\bQ^{n+1}$. In particular, since $n > 0$, the vector $(\gamma_{x,y}^{\dR})_{\cE_{\underline{\omega}}}(0,\ldots,0,1)$ has its first entry equal to zero. But this means precisely that the iterated integral over $\gamma_{x,y}^{\dR}$ vanishes.
\end{proof}

\subsubsection{Multiple polylogarithms}
\label{sec:multiple-polylogarithms}
The formalism of Tannakian iterated integrals outlined above applies to arbitrary smooth curves. In our particular case of the thrice-punctured line, iterated integrals are called \emph{multiple polylogarithms} and special notation is used. For any two basepoints~$x$ and~$y$ on~$X$ and any path $\gamma \in \pi_1^{\dR}(X;x,y)$, the integral over $\rd t/t$ is called the \emph{logarithm}:
\begin{align*}
	\log(\gamma) \coloneqq \int_\gamma \frac{\rd t}{t}.
\end{align*}
The integral over a word of the form $\frac{\rd t}{t} \cdots \frac{\rd t}{t} \frac{\rd t}{1-t}$ of length~$n$ is called the \emph{$n$-th polylogarithm}:
\[ \Li_n(\gamma) \coloneqq \int_\gamma \underbrace{\frac{\rd t}{t} \cdots \frac{\rd t}{t} \frac{\rd t}{1-t}}_{n}. \]
More generally, for a tuple $(n_1,\ldots,n_r)$ of positive integers, we have the \emph{multiple polylogarithm}
\[ \Li_{n_1,\ldots,n_r}(\gamma) \coloneqq \int_\gamma \underbrace{\frac{\rd t}{t} \cdots \frac{\rd t}{t} \frac{\rd t}{1-t}}_{n_1} \cdots \cdots \underbrace{\frac{\rd t}{t} \cdots \frac{\rd t}{t} \frac{\rd t}{1-t}}_{n_r}. \]

An alternative notation uses words in two symbols $e_0$ and $e_1$ as subscripts, corresponding to the two differential forms~$\rd t/t$ and $\rd t/(1-t)$. In this notation we have $\log = \Li_{e_0}$, $\Li_n = \Li_{e_0^{n-1}e_1}$, and $\Li_{n_1,\ldots,n_r} = \Li_{e_0^{n_1-1}e_1\cdots e_0^{n_r-1}e_1}$.

\subsubsection{The universal unipotent vector bundle with flat connection}
\label{sec:universal-unipotent-vector-bundle}

Fix a basepoint $x$ of~$X$. The category $\Un(X)$ has a universal pointed pro-object $(\cE_{\univ}, 1_x)$ where $1_x$ is an element of $\cE_{\univ,x} \coloneqq \Fib_x(\cE_{\univ})$ \cite{kim09}. It is characterized by the fact that for any $\cV \in \Un(X)$ and $v_x \in \cV_x$ there exists a unique map $\phi\colon \cE_{\univ} \to \cV$ of pro-unipotent vector bundles with flat connection such that $1_x \mapsto v_x$; in other words, $(\cE_{\univ}, 1_x)$ represents the fiber functor~$\Fib_x$. There is a natural co-algebra structure on $\cE_{\univ}$: the coproduct $\Delta\colon \cE_{\univ} \to \cE_{\univ} \,\hat \otimes \, \cE_{\univ}$ is the unique map with $1_x \mapsto 1_x \otimes 1_x$ in the fiber over~$x$, and the counit is the unique map $\varepsilon\colon \cE_{\univ} \to \cO_X$ with $1_x \mapsto 1$. In particular, for any other basepoint~$y$ of~$X$, the fiber $\cE_{\univ,y}$ is a co-algebra over~$\bQ$, so its linear dual $\cE_{\univ,y}^{\vee}$ is a $\bQ$-algebra. There is a natural isomorphism
\begin{equation}
	\label{eq:path-space-via-universal-object}
	\pi_1^{\dR}(X;x,y) \cong \Spec(\cE_{\univ,y}^{\vee})
\end{equation}
which can be described as follows: given a de Rham path $\gamma \in \pi_1^{\dR}(X;x,y)(R)$ defined over some $\bQ$-algebra~$R$, we get an element $\gamma_{\cE_{\univ}}(1_x) \in \cE_{\univ,y} \, \hat\otimes\, R$, which defines a $\bQ$-linear map $f_{\gamma}\colon \cE_{\univ,y}^\vee \to R$. The fact that $\gamma$ is tensor-natural implies that $f_{\gamma}$ is an algebra homomorphism, hence defining an element of $\Spec(\cE_{\univ,y}^\vee)(R)$.

An explicit construction of the universal bundle is described in \cite{kim09}. Corresponding to the choice of basis $\omega_0 = \rd t/t$ and $\omega_1 = \rd t/(1-t)$ of $\rH^1_{\dR}(X)$ we have the pro-vector space of non-commutative power series $E \coloneqq \bQ\llangle e_0,e_1 \rrangle$, viewed as a pro-object by quotienting by powers of the augmentation ideal. The universal pro-bundle is given by $\cE_{\univ} \coloneqq E \, \hat \otimes\, \cO_X$ with connection determined by
\[ \nabla f = -e_0 f \omega_0 - e_1 f \omega_1 \]
for constant sections $f \in E$, and with distinguished element $1$ in the fiber over~$x$. The coproduct on this bundle satisfies $\Delta e_i = e_i \otimes 1 + 1 \otimes e_i$ for $i=0,1$ and is compatible with the multiplication on~$E$. Note that all fibers of $\cE_{\univ}$ are isomorphic to~$E$. Via the isomorphism~\eqref{eq:path-space-via-universal-object}, de Rham paths $\gamma \in \pi_1^{\dR}(X;x,y)(R)$ can be identified with ring homomorphisms $E^{\vee} \to R$, hence with grouplike elements of $E \,\hat\otimes\, R = R\llangle e_0,e_1 \rrangle$.

\begin{lemma}
	\label{paths-as-series}
	Under the isomorphism~\eqref{eq:path-space-via-universal-object}, a de Rham path $\gamma \in \pi_1^{\dR}(X;x,y)(R)$ corresponds to the grouplike element
	\begin{equation}
		\label{eq:path-as-series}
		\sum_w \left(\int_{\gamma} \underline{\omega}_w\right) w \in R\llangle e_0,e_1\rrangle,
	\end{equation}
	where $\underline{\omega}_w = \omega_{i_1} \cdots \omega_{i_n}$ denotes the sequence of differential forms corresponding to a word $w = e_{i_1}\cdots e_{i_n}$.
\end{lemma}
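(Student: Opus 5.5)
The plan is to compute the $w$-coefficient of $\gamma_{\cE_{\univ}}(1_x) \in R\llangle e_0,e_1\rrangle$ by mapping $\cE_{\univ}$ to the finite-rank objects $\cE_{\underline{\omega}}$ of §\ref{sec:tannakian-iterated-integrals}. Fix a word $w = e_{i_1}\cdots e_{i_n}$. I first construct a morphism of pro-objects $\phi_w\colon \cE_{\univ} \to \cE_{\underline{\omega}_w}$, or possibly to its dual or an order-reversed variant; the conventions need care. On constant sections $f\in E$, set
\[ \phi_w(f) = \bigl(f_{e_{i_1}\cdots e_{i_n}}(f),\, f_{e_{i_2}\cdots e_{i_n}}(f),\, \ldots,\, f_{e_{i_n}}(f),\, f_{\emptyset}(f)\bigr)\in \cO_X^{n+1}, \]
where $f_u$ are the coefficient extraction functionals of Definition~\ref{def:coeff-extraction-functions}. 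This map factors through the finite quotient of $E$ by words of length $>n$, so it is a genuine map from the pro-object.

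The first step is to check horizontality. Since $\nabla f = -e_0 f\omega_0 - e_1 f\omega_1$, and left multiplication by $e_i$ shifts coefficients via $f_{e_j u}(e_i f) = \delta_{ij} f_u(f)$, the $k$-th coordinate of $\phi_w(\nabla f)$ is $-\omega_{i_k}$ times the $(k+1)$-st coordinate of $\phi_w(f)$. I then compare this with the connection on $\cE_{\underline{\omega}}$ given by \eqref{eq:connection_matrix}, namely $\nabla = \rd - (\text{matrix})$. The signs should match. If they do not, I will replace the target by the variant with connection matrix $-(\cdot)$, or equivalently use the dual object as in the proof of the path inversion formula. This sign and ordering bookkeeping is the main obstacle: I must make sure that the word is read in the correct order, so that the answer is $\int_\gamma \omega_{i_1}\cdots\omega_{i_n}$ and not the reversed word.

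The next step is the conclusion. The map $\phi_w$ sends $1_x\in\cE_{\univ,x}=E$ to $e_{n+1}=(0,\ldots,0,1)$, because only the empty-word coefficient of $1$ is nonzero. By naturality of $\gamma$, we have $\phi_{w,y}(\gamma_{\cE_{\univ}}(1_x)) = \gamma_{\cE_{\underline\omega_w}}(e_{n+1})$. Applying $e_1^\vee$ to the left side gives the $w$-coefficient of $\gamma_{\cE_{\univ}}(1_x)$. Applying it to the right side gives $\int_\gamma\underline{\omega}_w$ by definition. Here I use that the fibre identifications $\Fib_y(\cE_{\univ}) = E$ and $\Fib_y(\cE_{\underline\omega}) = \bQ^{n+1}$ both come from the trivializations of the underlying bundles, for tangential basepoints via the canonical extension. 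Since $w$ was arbitrary, the series $\gamma_{\cE_{\univ}}(1_x)$ equals \eqref{eq:path-as-series}.

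Grouplikeness of this element is then automatic from the discussion preceding the lemma. As a consistency check, it also matches the shuffle product formula for iterated integrals against the shuffle formula \eqref{eq:shuffle-product} for coefficient functionals.
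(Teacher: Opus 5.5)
Your proposal is correct and follows essentially the same route as the paper. The paper uses exactly your map $\sum_{w'} b_{w'}w' \mapsto (b_{e_{i_k}\cdots e_{i_n}})_{k=1}^{n+1}$, checks that it is horizontal and sends $1 \mapsto (0,\ldots,0,1)$, and reads off the first coordinate using naturality of $\gamma$. Your sign worry also resolves in your favour: since $\nabla = \rd - M$ on $\cE_{\underline{\omega}}$, a constant section $v$ satisfies $(\nabla v)_k = -\omega_{i_k} v_{k+1}$, which matches your computation of $\phi_w(\nabla f)$, so you need neither the dual nor a reversed variant.
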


\begin{proof}
	The path $\gamma$ corresponds to the element $\gamma_{\cE_{\univ}}(1_x) \in \cE_{\univ,y} \, \hat\otimes\, R = E \,\hat\otimes \,R = R\llangle e_0,e_1\rrangle$. Write this as $\sum_w a_w w$. We are claiming that $a_w = \int_{\gamma} \underline{\omega}_w$. Fix $w = e_{i_1}\cdots e_{i_n}$, and let $\underline{\omega} = \omega_{i_1}\cdots\omega_{i_n}$ be the corresponding sequence of differential forms. Associated to $\underline{\omega}$ we have the unipotent vector bundle with flat connection $\cE_{\underline{\omega}}$ from §\ref{sec:tannakian-iterated-integrals}. By the universality of $\cE_{\univ}$ there is a unique map $\phi\colon \cE_{\univ} \to \cE_{\underline{\omega}}$ whose induced map on $x$-fibers $E \to \bQ^{n+1}$ sends $1 \mapsto (0,\ldots,0,1)$. The map given on constant sections by $\sum_{w'} b_{w'} w' \mapsto (b_{e_{i_k}\cdots e_{i_n}})_{k=1}^{n+1}$ is compatible with the connections on $\cE_{\univ}$ and $\cE_{\underline{\omega}}$ and maps $1 \mapsto (0,\ldots,0,1)$, so it is the map~$\phi$. By naturality of~$\gamma$, the iterated integral $\int_{\gamma} \underline{\omega}$ equals the first entry of the vector $\phi_y(\sum_{w'} a_{w'} w') \in \cE_{\underline{\omega},y} \otimes R = R^{n+1}$, but this is precisely $a_w$.
\end{proof}

In \Cref{paths-as-series}, the fact that $\sum_w \left(\int_{\gamma} \underline{\omega}_w\right) w$ is a grouplike element of $R\llangle e_0,e_1\rrangle$ is a reflection of the shuffle product formula for iterated integrals (§\ref{sec:tannakian-iterated-integrals}).

\begin{lemma}
	\label{lyndon-basis-of-coordinate-ring}
	The multiple polylogarithm functions $\Li_w$ with $w$ running through the Lyndon words on the letters $\{e_0 < e_1 \}$ form an algebra basis of $\cO(\pi_1^{\dR}(X;x,y))$.
\end{lemma}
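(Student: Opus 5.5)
The plan is to reduce the statement to the corresponding fact about coefficient extraction functionals on a free pro-unipotent group, which was recorded in §\ref{sec:hopf-algebra}. By \eqref{eq:path-space-via-universal-object} we have $\pi_1^{\dR}(X;x,y) \cong \Spec(\cE_{\univ,y}^\vee)$. Via Kim's explicit model $\cE_{\univ} = E \,\hat\otimes\, \cO_X$ with $E = \bQ\llangle e_0,e_1\rrangle$, this gives $\cO(\pi_1^{\dR}(X;x,y)) \cong E^\vee$. Here $E^\vee$ is the continuous dual of the non-commutative power series ring, with basis the coefficient extraction functionals $f_w$ of \Cref{def:coeff-extraction-functions}.

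The first step is to identify these functionals with multiple polylogarithms. By \Cref{paths-as-series}, a path $\gamma \in \pi_1^{\dR}(X;x,y)(R)$ corresponds to the series $\sum_w (\int_\gamma \underline{\omega}_w)\, w$. So the function $f_w$ on the path space sends $\gamma$ to $\int_\gamma \underline{\omega}_w$, which is $\Li_w(\gamma)$ in the notation of §\ref{sec:multiple-polylogarithms}. This holds functorially in $R$, so $f_w = \Li_w$ as elements of $\cO(\pi_1^{\dR}(X;x,y))$.

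The second step is to check that the algebra structure on $E^\vee$ is the shuffle product. The algebra structure on $\cE_{\univ,y}^\vee$ is dual to the coproduct on $E$, which is determined by $\Delta e_i = e_i\otimes 1 + 1 \otimes e_i$ and compatibility with multiplication. This is exactly the coproduct of $\cH(U)$ for the free pro-unipotent group $U = U(\{e_0,e_1\})$ in §\ref{sec:hopf-algebra}. Hence the product of the $f_w$ is given by the shuffle formula \eqref{eq:shuffle-product}. Alternatively, this follows directly from the shuffle product property of Tannakian iterated integrals in §\ref{sec:tannakian-iterated-integrals}, now that $f_w = \Li_w$.

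The conclusion then follows from Reutenauer's theorem, quoted after \eqref{eq:shuffle-product}: the shuffle algebra over $\{e_0<e_1\}$ is the free commutative $\bQ$-algebra on the $f_w$ with $w$ Lyndon (\cite[Theorem~6.1]{reutenauer93}). Since $f_w = \Li_w$, the $\Li_w$ for Lyndon $w$ form a free algebra basis, i.e.\ are algebraically independent generators. The only point needing care is that the identification $\cO(\pi_1^{\dR}(X;x,y)) \cong E^\vee$ respects the topology, i.e.\ uses continuous duals so that each $f_w$ is a genuine function. This holds because the equivalence \eqref{eq:vector-space-duality} matches the pro-structure on $E$ given by powers of the augmentation ideal. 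I do not expect a serious obstacle; the argument is a matter of assembling \Cref{paths-as-series}, the shuffle formula, and the Lyndon theorem.
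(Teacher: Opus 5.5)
Your proposal is correct and follows essentially the same route as the paper: identify $\cO(\pi_1^{\dR}(X;x,y))$ with the shuffle algebra $\bQ\llangle e_0,e_1\rrangle^\vee$, identify $f_w$ with $\Li_w$ via \Cref{paths-as-series}, and apply Reutenauer's theorem on Lyndon words. Your extra checks on the coproduct and on continuity of the duality go beyond the paper's short proof but do not change the argument.
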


\begin{proof}
	The ring $\cO(\pi_1^{\dR}(X;x,y)) = \bQ\llangle e_0,e_1 \rrangle^\vee$ is a shuffle algebra. An algebra basis is given by the coefficient extraction functionals $f_w$ with $w$ running through the Lyndon words \cite[Theorem~6.1]{reutenauer93}. By \Cref{paths-as-series}, $f_w$ is the multiple polylogarithm $\Li_w$ function associated to the word~$w$.
\end{proof}

\subsubsection{Frobenius-invariant paths}
\label{sec:frobenius-invariant-paths}

Let $p$ be a prime. For any two $\bZ_p$-integral basepoints $x$, $y$ of~$X$ there exists a certain canonical $\bQ_p$-valued de Rham path
\begin{equation}
	\label{eq:frobenius-invariant-path}
	\gamma_{x,y}^{\cris} \in \pi_1^{\dR}(X; x,y)(\bQ_p).
\end{equation}
Its construction relies on the comparison isomorphism
\begin{equation}
	\label{eq:deRham-crystalline-comparison}
	\pi_1^{\dR}(X;x,y) \otimes_{\bQ} \bQ_p \cong \pi_1^{\cris}(X;x,y)
\end{equation}
between the de Rham path space and the crystalline path space \cite[Proposition~2.4.1]{chiarelotto-le-stum}. The latter is defined as the scheme of tensor isomorphisms of fiber functors on the Tannakian category of unipotent overconvergent isocrystals on $X_{\FF_p}$. There is a canonical path $\gamma_{x,y}^{\cris} \in \pi_1^{\cris}(X; x,y)(\bQ_p)$ called \emph{analytic continuation along Frobenius} \cite[Definition~3.5]{Besser}, the unique path that is fixed by the Frobenius automorphism \cite[Corollary~3.3]{Besser}. Via the comparison isomorphism~\eqref{eq:deRham-crystalline-comparison}, we can view $\gamma_{x,y}^{\cris}$ as a $\bQ_p$-valued de Rham path.

\subsubsection{Iterated Coleman integrals and $p$-adic multiple zeta values}
\label{sec:padic-multiple-polylogarithms}

Let $p$ be a prime and $x,y$ two $\bZ_p$-integral basepoints. In Besser's interpretation of Coleman integration via the Tannakian formalism \cite{Besser}, the \emph{Coleman iterated integral} from $x$ to $y$ is defined as the iterated integral (§\ref{sec:tannakian-iterated-integrals}) along the Frobenius-invariant path $\gamma_{x,y}^{\cris} \in \pi_1^{\dR}(X;x,y)(\bQ_p)$ discussed in the previous paragraph: 
\[ \int_x^y \omega_1\cdots \omega_n \coloneqq \int_{\gamma_{x,y}^{\cris}} \omega_1 \cdots \omega_n \in \bQ_p. \]
In particular, when the first basepoint is fixed as the tangential basepoint $0 = \vec{1}_0$, we have the \emph{$p$-adic logarithm}
\[ \log^p(z) \coloneqq \int_0^z \frac{\rd t}{t}; \]
for $n \geq 1$ we have the \emph{$n$-th $p$-adic polylogarithm}
\[ \Li_n^p(z) \coloneqq \int_0^z \underbrace{\frac{\rd t}{t} \cdots \frac{\rd t}{t} \frac{\rd t}{1-t}}_{n}; \]
and more generally, for a tuple $(n_1,\ldots,n_r)$ of positive integers, we have the \emph{$p$-adic multiple polylogarithm}
\[ \Li_{n_1,\ldots,n_r}^p(z) \coloneqq \int_0^z \underbrace{\frac{\rd t}{t} \cdots \frac{\rd t}{t} \frac{\rd t}{1-t}}_{n_1} \cdots \cdots \underbrace{\frac{\rd t}{t} \cdots \frac{\rd t}{t} \frac{\rd t}{1-t}}_{n_r}. \]
The multiple polylogarithm functions $\Li_{n_1,\ldots,n_r}^p$ and $\log(z)$ belong to the ring $A_{\myCol}(X)$ of \emph{Coleman functions} on $X(\bZ_p)$, a sub-$\bQ_p$-algebra of the ring of locally analytic functions \cite[§4]{Besser}. (More precisely, they are \emph{Coleman algebraic functions}, cf.\ \cite[§4]{betts:effective}).

Iterated Coleman integrals from the tangential basepoint $0 = \vec{1}_0$ to the tangential basepoint $1 = (-\vec{1})_1$ are called \emph{$p$-adic multiple zeta values} \cite{FurushoI, FurushoII}:
\[ \zeta^p(n_1,\ldots,n_r) \coloneqq \Li_{n_1,\ldots,n_r}^p(1) = \int_0^1  \underbrace{\frac{\rd t}{t} \cdots \frac{\rd t}{t} \frac{\rd t}{1-t}}_{n_1} \cdots \cdots \underbrace{\frac{\rd t}{t} \cdots \frac{\rd t}{t} \frac{\rd t}{1-t}}_{n_r}. \]
We often omit the superscript $(-)^p$ when there is no danger of confusion.

\subsection{Mixed Tate motives and the motivic fundamental group}
\label{sec:mixed-tate-motives}

\subsubsection{The mixed Tate motivic Galois group of an integer scheme}
\label{sec:mixed-tate-galois-group}
Let $S$ be a finite set of primes and let $Z = \Spec \Zb[1/S] = \Spec(\bZ) \smallsetminus S$ be its complement in $\Spec(\bZ)$. Let $\MT(Z, \QQ)$ be the Tannakian category of mixed Tate motives over~$Z$ with $\QQ$-coefficients \cite{DelGon05}.
We have a \emph{de Rham realization functor}
\[
\mathrm{real}^{\dR} \colon \MT(Z, \QQ) \to \Vect_{\Qb}
\]
and a \emph{$p$-adic étale realization functor}
\[
\mathrm{real}^{\et} \colon \MT(Z, \QQ) \to \mathrm{Rep}^{f,Z}_{\Qp}(G_{\Qb})
\]
to Galois representations that are unramified at $\ell \in Z \setminus \{p\}$ and crystalline (resp.\ de Rham) at $p$ if $p \in Z$ (resp.\ $p \in S$). If we fix $p \in Z$, we also have a
\emph{crystalline realization functor}
\[ \real^{\cris}\colon \MT(Z,\QQ) \to \Vect_{\bQ_p}. \]
Note that every de Rham realization $\mathrm{real}^{\dR}(M)$ comes with a canonical Hodge filtration, and every crystalline realization $\mathrm{real}^{\cris}(M)$ comes with a Frobenius action.
The category $\MT(Z, \QQ)$ is a neutral $\Qb$-linear Tannakian category with fiber functor $\mathrm{real}^{\dR}$, and we let $G_Z$ denote its Tannakian fundamental group. The simple objects of $\MT(Z, \QQ)$ are precisely $\Qb(n) \coloneqq \Qb(1)^{\otimes n}$ for $n \in \Zb$, where $\Qb(1) \coloneqq h_2(\Pb^1;\Qb)$. Every $M \in \MT(Z,\QQ)$ has an increasing filtration $W_i M$ known as the \emph{weight filtration}. The quotient $W_i M / W_{i-1} M$ is trivial when $i$ is odd and is isomorphic to a direct sum of copies of $\Qb(-i/2)$ when $i$ is even. The weight filtration is split by the Hodge filtration on $\mathrm{real}^{\dR}$, which implies a canonical decomposition
\[
G_Z = \UZ \rtimes \Gm,
\]
where $\UZ$ is the maximal pro-unipotent subgroup of $G_Z$. We call $\UZ$ the \emph{unipotent mixed Tate motivic Galois group} of~$Z$. The action of $\Gm$ on $\UZ$ by conjugation gives an action of $\Gm$, or equivalently, a grading, on the Hopf algebra $\cO(\UZ)$. This graded Hopf algebra is denoted
\[
\bigoplus_{i=0}^\infty A(Z)_i = A(Z) := \Oo(\UZ),
\]
where $A(Z)_i$ denotes the graded piece on which $\lambda \in \bG_m$ acts as multiplication by $\lambda^{-i}$. 
We refer to the degree on $A(Z)$ as the \emph{half-weight}, since it is half the ordinary motivic weight. The pro-nilpotent Lie algebra of the pro-unipotent group $U_Z$ is denoted by
\[ \prod_{i=1}^\infty \fn(Z)_{-i} = \fn(Z).  \]
It is isomorphic to the space of derivations $A(Z) \to \bQ$ at the identity. Work of Voevodsky shows that $\Ext^1_{\MT(Z,\QQ)}(\Qb(0),\Qb(n)) \cong K_{2n-1}(Z)_\Qb = K_{2n-1}^{(n)}(Z)$ and $\Ext^i_{\MT(Z,\QQ)}=0$ for $i \ge 2$, which by work of Borel implies that $\nf(Z)$ is (non-canonically)\footnote{The isomorphism is canonical only on abelianization, which implies the isomorphism for $A(Z)$ is canonical only on primitive elements.} isomorphic to a free pro-nilpotent Lie algebra on the set $\Sigma \coloneqq \{\tau_{\ell}\}_{\ell \in S} \cup \{\sigma_{2n+1}\}_{n \in \mathbb{Z}_{\ge 1}}$. Such an isomorphism determines an isomorphism between $A(Z)$ and the shuffle algebra spanned by elements $a_w$ dual to the words $w$ in the generators~$\Sigma$.

\subsubsection{The Motivic Unipotent Fundamental Group of $X$}
\label{sec:motivic-fundamental-group}

For basepoints $x$ and $y$, each of which is either an element of $X(Z)$ or a $Z$-integral tangential basepoint (as defined in \cite[\S 15]{Deligne89}), we have the motivic path space $\pi_1^{\mot}(X; x,y)$ from $x$ to $y$ in $X$ \cite[§4.2]{DelGon05}. It is an affine scheme in the Tannakian category $\MT(Z,\QQ)$, meaning that its coordinate ring $\cO(\pi_1^{\mot}(X; x,y))$ is an algebra in $\Ind\mhyphen\MT(Z,\QQ)$. 
Equivalently, this means that $\mathrm{real}^{\dR}(\pi_1^{\mot}(X; x,y))$ is an affine scheme with algebraic action of $G_Z$. For $x=y$, we denote the motivic path space by $\pi_1^{\mot}(X,x)$ and call it the \emph{motivic fundamental group} of~$X$ at~$x$. Then $\Oc(\pi_1^{\mot}(X,x))$ is a Hopf algebra in $\Ind\mhyphen\MT(Z,\QQ)$, while its Lie algebra $\nf(X)$ is a Lie algebra in $\Pro\mhyphen\MT(Z,\QQ)$. For the particular tangential basepoint $x=\vec{1}_0$, we denote $\pi_1^{\mot}(X,x)$ by $\pi_1^{\mot}(X)$.

The image of the motivic path space $\pi_1^{\mot}(X; x,y)$ under the de Rham realization functor $\real^{\dR}$ (resp.\ $p$-adic étale realization functor $\real^{\et}$, resp.\ crystalline realization functor $\real^{\cris}$) is the de Rham (resp.\ \'etale, crystalline) torsor of paths from $x$ to $y$, i.e., the scheme of tensor isomorphisms from the fiber functor at $x$ to the fiber functor at $y$ on the category $\Un(X)$ of unipotent vector bundles with flat connection on $X_{\Qb}$ (resp.\ the category of unipotent $\Qp$-local systems on $X_{\overline{\Qb}}$, the category of unipotent overconvergent isocrystals on $X_{\Fb_p}$). In particular, the motivic Galois group $G_Z$ acts on the de Rham fundamental group $\pi_1^{\dR}(X)$. Having an explicit understanding of this Galois action is crucial for our results. We address this problem in §\ref{sec:galois-action} below.

\subsubsection{The $p$-adic period point}
\label{sec:period-point}

Let $p \in Z$ be a prime. The unipotent mixed Tate motivic Galois group $U_Z$ contains a special $\bQ_p$-point
\begin{equation}
	\label{eq:period-point}
	\eta_p \in U_Z(\bQ_p)
\end{equation}
called the \emph{$p$-adic period point}. The associated algebra homomorphism
\begin{equation}
	\label{eq:period-map}
	\per_p = \eta_p^{\sharp}\colon A(Z) \to \bQ_p
\end{equation}
is called the \emph{$p$-adic period map}.
The point $\eta_p$ is the inverse of the point $\eta_p^{\ur}$ constructed in \cite[Lemma~2.2.5]{ChatUnv}; see also \cite[§3.3]{BKL:chabauty_kim_sc}. It is defined by comparing two canonical splittings $s_{\dR}$ and $s_{\cris}$ of the weight filtration on the de Rham realization over $\bQ_p$. For a mixed Tate motive $M \in \MT(Z,\bQ)$, the Hodge filtration provides a splitting:
\[ M^{\dR} = \bigoplus_i W_{2i}M^{\dR} \cap F^i M^{\dR}. \]
On the other hand, $M^{\dR}_{\bQ_p} \cong M^{\cris}$ has the structure of a filtered $\varphi$-module, and the decomposition of $M^{\dR}_{\bQ_p}$ into Frobenius eigenspaces provides a second splitting:
\[ M^{\dR}_{\bQ_p} = \bigoplus_i (M^{\dR}_{\bQ_p})^{\varphi=p^i}. \]
The automorphism
\[ M^{\dR}_{\bQ_p} \xrightarrow[\sim]{s_{\cris}^{-1}} \bigoplus_i \gr^W_{2i} M^{\dR}_{\bQ_p} \xrightarrow[\sim]{s_{\dR}} M^{\dR}_{\bQ_p} \]
taking $(M^{\dR}_{\bQ_p})^{\varphi=p^i}$ to $W_{2i}M^{\dR}_{\bQ_p} \cap F^i M^{\dR}_{\bQ_p}$ is $\otimes$-natural in~$M$ and hence defines a point $\eta_p^{\ur} \in G_Z(\bQ_p)$. It is contained in the pro-unipotent subgroup $U_Z$ since it acts trivially on the pure motives $\bQ(n)$; its inverse is the $p$-adic period point~\eqref{eq:period-point}.

The property most important for us is that $\eta_p$ takes the canonical de Rham path (§\ref{sec:canonical-paths}) to the Frobenius-invariant path (§\ref{sec:frobenius-invariant-paths}) \cite[Lemma~3.9]{BKL:chabauty_kim_sc}:

\begin{lemma}
	\label{lem:period-point-of-de-rham-path}
	For any two $Z$-integral basepoints $x,y$ of $X$, we have
	\[ \eta_p(\gamma_{x,y}^{\dR}) = \gamma_{x,y}^{\cris}. \]
\end{lemma}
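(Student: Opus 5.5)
The statement reduces to a uniqueness property of the crystalline path together with equivariance of the period point. By \cite[Corollary~3.3]{Besser}, $\gamma_{x,y}^{\cris}$ is the \emph{unique} $\bQ_p$-point of $\pi_1^{\cris}(X;x,y)\cong\pi_1^{\dR}(X;x,y)_{\bQ_p}$ fixed by Frobenius. So it suffices to show that $\eta_p(\gamma_{x,y}^{\dR})$ is Frobenius-invariant. We will check this by comparing the two splittings $s_{\dR}$ and $s_{\cris}$ that define $\eta_p^{\ur}=\eta_p^{-1}$.

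\textbf{Step 1: pass to ind-motives.} The coordinate ring $\cO(\pi_1^{\mot}(X;x,y))$ is an ind-object of $\MT(Z,\QQ)$, and the basepoints are $Z$-integral with $p\in Z$. Hence $\eta_p^{\ur}$, being $\otimes$-natural, acts on $\cO(\pi_1^{\dR}(X;x,y))_{\bQ_p}$ by algebra automorphisms. We write every path as an algebra map $\cO\to\bQ_p$. The action on points is then precomposition: $\eta_p(\gamma)^\sharp=\gamma^\sharp\circ(\eta_p^{\ur})$, up to the left/right convention, which we will fix consistently.

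\textbf{Step 2: characterize the de Rham path by the Hodge splitting.} By \eqref{eq:hodge-filtered-map}, $(\gamma^{\dR}_{x,y})^\sharp$ kills $F^1\cO$. Every weight piece of $\cO$ has nonnegative Hodge weight, since it is built from $\bQ(-n)$ with $n\ge 0$. It follows that $(\gamma^{\dR}_{x,y})^\sharp$ is the projection onto the summand $W_0\cap F^0$ of the Hodge splitting $s_{\dR}$, that is, the part of half-weight $0$, which is $\bQ\cdot 1$. Equivalently, it kills $s_{\dR}(\gr^W_{2i})$ for every $i>0$.

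\textbf{Step 3: transport to the Frobenius splitting.} $\eta_p^{\ur}$ carries $(\cO_{\bQ_p})^{\varphi=p^i}$ onto $W_{2i}\cap F^i$. Therefore $\gamma^\sharp\circ\eta_p^{\ur}$ kills every Frobenius eigenspace with $i>0$ and is the counit-type projection on $(\cO)^{\varphi=1}=\bQ_p$. Such a functional commutes with $\varphi$, i.e.\ $\lambda\circ\varphi=\lambda$, so the corresponding path is Frobenius-fixed. Uniqueness then gives the equality with $\gamma^{\cris}_{x,y}$.

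The main obstacle is bookkeeping rather than substance. We must identify the motivic Frobenius on $\real^{\cris}$ with Besser's Frobenius on overconvergent isocrystals, including at the tangential basepoints, via the comparison \eqref{eq:deRham-crystalline-comparison}. We must also get the $\eta_p$ versus $\eta_p^{\ur}$ inversion and the direction of the action right. For both we would cite \cite[Lemma~3.9]{BKL:chabauty_kim_sc} and \cite{ChatUnv} rather than reprove the compatibility.
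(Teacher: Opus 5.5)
Your argument is correct. The paper gives no proof of its own here; it only cites \cite[Lemma~3.9]{BKL:chabauty_kim_sc}. Your proposal therefore supplies the argument the paper outsources. It rests on three facts. First, $(\gamma^{\dR}_{x,y})^\sharp$ is the projection of $\cO$ onto $W_0\cO=\bQ\cdot 1$ along $F^1\cO=\bigoplus_{i>0}(W_{2i}\cap F^i)$. Second, a Frobenius-fixed path is the projection onto $\cO_{\bQ_p}^{\varphi=1}=\bQ_p\cdot 1$ along $\bigoplus_{i>0}\cO_{\bQ_p}^{\varphi=p^i}$. Third, $\eta_p^{\ur}$ is an algebra automorphism carrying the second decomposition onto the first.

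Three remarks on the write-up.

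\emph{Circular citation.} You cannot cite \cite[Lemma~3.9]{BKL:chabauty_kim_sc} for the ``bookkeeping'', since that is exactly the statement being proved. The only genuine external input is that the Frobenius on the crystalline realization of $\pi_1^{\mot}(X;x,y)$ agrees with Besser's Frobenius on overconvergent isocrystals, including at tangential basepoints. Cite \cite{ChatUnv} (or the underlying comparison results) for that.

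\emph{Direction of the action.} Fix this rather than hedging it. Points are grouplike elements of $\cO^\vee$, and the pairing $\cO^\vee\otimes\cO\to\bQ$ is $G_Z$-equivariant. So $g(\gamma)^\sharp=\gamma^\sharp\circ g^{-1}$, and hence $\eta_p(\gamma)^\sharp=\gamma^\sharp\circ\eta_p^{\ur}$, exactly as you wrote. The opposite choice would prove the different, generally false, statement $\eta_p^{\ur}(\gamma^{\dR}_{x,y})=\gamma^{\cris}_{x,y}$. For $x=\vec{1}_0$ and $y\in X(Z)$, the resulting path would have $\log$ equal to $-\log^p(y)$ instead of $\log^p(y)$.

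\emph{Uniqueness.} Besser's uniqueness result \cite[Corollary~3.3]{Besser} is not actually needed. Your eigenspace computation already shows that any algebra homomorphism $\lambda$ with $\lambda\circ\varphi=\lambda$ kills the eigenspaces with eigenvalue $p^i\neq 1$ and sends $1\mapsto 1$. So $\lambda$ is the projection you computed, and you only need that $\gamma^{\cris}_{x,y}$ is Frobenius-fixed.
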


Elements in the image of the period map $\per_p\colon A(Z) \to \bQ_p$ are called \emph{$p$-adic periods}. Conjecturally, all algebraic relations between $p$-adic periods already hold in $A(Z)$ \cite[Conjecture~2.25]{CDC:polylog1} \cite[Conjecture 4]{YamashitaBounds}:

\begin{conj}[$p$-adic period conjecture]
	\label{conj:period-conjecture}
	The period map~\eqref{eq:period-map} is injective.
\end{conj}

\subsection{Motivic polylogarithms and MZVs}
\label{sec:motivic-polylogarithms}

\subsubsection{Motivic iterated integrals}
\label{sec:motivic-iterated-integrals}

If $z$ is a $Z$-integral basepoint of~$X$, and $(n_1,\ldots,n_r)$ is a tuple of positive integers, the (unipotent) \emph{motivic polylogarithm} 
\[ \Li_{n_1,\ldots,n_r}^{\fu}(z) \]
is the element of $A(Z) = \Oc(U_Z)$ defined by
\[ u \mapsto \Li_{n_1,\ldots,n_r}(u(\gamma_{0,z}^{\dR})) = \int_{u(\gamma_{0,z}^{\dR})} \underbrace{\frac{\rd t}{t} \cdots \frac{\rd t}{t} \frac{\rd t}{1-t}}_{n_1} \cdots \cdots \underbrace{\frac{\rd t}{t} \cdots \frac{\rd t}{t} \frac{\rd t}{1-t}}_{n_r}. \]
Here, $\gamma_{0,z}^{\dR} \in \pi_1^{\dR}(X;0,z)(\bQ)$ denotes the canonical de Rham path from the tangential basepoint $0 = \vec{1}_0$ to~$z$ (§\ref{sec:canonical-paths}); $u$ is an element of $U_Z(R)$ for some $\bQ$-algebra~$R$, and $u(\gamma_{0,z}^{\dR})$ is the $R$-valued de Rham path from $0$ to~$z$ obtained by letting $u$ act on the canonical path via the $U_Z$-action on $\pi_1^{\dR}(X;0,z)$. We also define the \emph{motivic logarithm}
\[ \log^{\fu}(z)\colon u \mapsto \int_{u(\gamma_{0,z}^{\dR})} \frac{\rd t}{t}. \]
Note that $\Li_1^{\fu}(z) = -\log^{\fu}(1-z)$.
The \emph{motivic multiple zeta value}
\[ \zeta^{\fu}(n_1,\ldots,n_r) \coloneqq \Li_{n_1,\ldots,n_r}^{\fu}(1) \in A(Z) \]
is defined as a motivic iterated integral from $0 = \vec{1}_0$ to $1 = (-\vec{1})_1$. We also sometimes use the alternative notation where the argument of~$\zeta^{\fu}$ is a word in $e_0,e_1$ corresponding to the sequence of differential forms appearing in the integral:
\[ \zeta^{\fu}(e_0^{n_1-1}e_1 \cdots e_0^{n_r-1}e_1) \coloneqq \zeta^{\fu}(n_1,\ldots,n_r). \]
We also use the notation $\zeta^{\fu}(w)$ when $w$ is a word in $\{e_0,e_1\}$ not necessarily beginning with $e_0$ and ending in $e_1$.

\begin{prop}
	Let $p \in Z$ be a prime. The period map~\eqref{eq:period-map} maps motivic iterated integrals to their $p$-adic counterparts:
	\begin{align*}
		\per_p(\log^{\fu}(z)) &= \log^p(z),\\
		\per_p(\Li_{n_1,\ldots,n_r}^{\fu}(z)) &= \Li_{n_1,\ldots,n_r}^p(z), \\
		\per_p(\zeta^{\fu}(n_1,\ldots,n_r)) &= \zeta^p(n_1,\ldots,n_r).
	\end{align*}
\end{prop}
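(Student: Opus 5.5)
The plan is to unwind the definitions and apply \Cref{lem:period-point-of-de-rham-path}. By definition, $\per_p = \eta_p^\sharp$ evaluates a function on $U_Z$ at the $\bQ_p$-point $\eta_p$. So for any word $\underline{\omega}$ we get
\[ \per_p\bigl(\Li^{\fu}_{n_1,\ldots,n_r}(z)\bigr) = \int_{\eta_p(\gamma_{0,z}^{\dR})} \underline{\omega}, \]
where $\underline{\omega}$ is the sequence of forms attached to $(n_1,\ldots,n_r)$. To justify this, note that the motivic polylogarithm is the function $u \mapsto \int_{u(\gamma^{\dR}_{0,z})}\underline{\omega}$, defined functorially in $R$-points. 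Specializing to $R=\bQ_p$ and $u=\eta_p\in U_Z(\bQ_p)$ gives exactly the displayed equality. Here we use that the action map $U_Z\times \pi_1^{\dR}(X;0,z)\to \pi_1^{\dR}(X;0,z)$ is a morphism of schemes, so the composite with the matrix-coefficient function $[\cE_{\underline{\omega}},e_{n+1},e_1^\vee]$ is a regular function on $U_Z$ whose $\bQ_p$-point value is computed pointwise.

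Next, \Cref{lem:period-point-of-de-rham-path}, applied to the $Z$-integral basepoints $x=0=\vec 1_0$ and $y=z$, gives $\eta_p(\gamma_{0,z}^{\dR}) = \gamma_{0,z}^{\cris}$. The right-hand side therefore becomes $\int_{\gamma^{\cris}_{0,z}}\underline{\omega}$. By the definition of Coleman iterated integrals in \S\ref{sec:padic-multiple-polylogarithms}, this is $\Li^p_{n_1,\ldots,n_r}(z)$.

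The logarithm case is the same argument with the single word $\rd t/t$. The multiple zeta value case is the special case $z = 1 = (-\vec 1)_1$. This is a $Z$-integral tangential basepoint for every $Z$, since it is defined over $\bZ$ with unit tangent vector. Both $\zeta^{\fu}$ and $\zeta^p$ are defined as the corresponding polylogarithms at this basepoint, so the formula follows.

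The only point requiring care is the compatibility of the $\bQ_p$-point evaluation with the comparison isomorphism~\eqref{eq:deRham-crystalline-comparison}: $\gamma^{\cris}$ must be viewed as a $\bQ_p$-valued de Rham path and the $G_Z$-action base-changed to $\bQ_p$. I expect no genuine obstacle here, because this is built into the statement of \Cref{lem:period-point-of-de-rham-path}, which we may assume. The remaining work is purely formal bookkeeping of functoriality in the coefficient ring.
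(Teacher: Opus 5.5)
Your proposal is correct and is essentially the paper's own proof. Both evaluate at $\eta_p$, use \Cref{lem:period-point-of-de-rham-path} to replace $\eta_p(\gamma_{0,z}^{\dR})$ by $\gamma_{0,z}^{\cris}$, recognize the Coleman iterated integral, and treat the zeta case as the specialization $z = 1$. Your added remarks on functoriality in the coefficient ring are harmless bookkeeping that the paper leaves implicit.
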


\begin{proof}
	The period map is defined as evaluation at the $p$-adic point~$\eta_p$, which by \Cref{lem:period-point-of-de-rham-path} maps the canonical de Rham path to the crystalline path. For example:
	\[ \per_p(\log^{\fu}(z)) = \log^{\fu}(z)(\eta_p) = \int_{\eta_p(\gamma_{0,z}^{\dR})} \frac{\rd t}{t} =  \int_{\gamma_{0,z}^{\cris}} \frac{\rd t}{t} = \log^p(z). \]
	The other cases are similar. 
\end{proof}

\begin{lemma}
	\label{motivic-zetas-nonzero}
	For odd integers $n \geq 3$ we have $\zeta^{\fu}(n) \neq 0$.
\end{lemma}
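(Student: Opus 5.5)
Since $\zeta^{\fu}(n)$ is a function on $U_Z$, to show it is nonzero I will exhibit a point at which it does not vanish, or rather a tangent vector at the identity along which its derivative is nonzero. The natural choice is a $p$-adic argument via the period map, but that would require $\zeta^p(n)\neq 0$. This is known (Coleman, Furusho: the $p$-adic value is $\frac{p^n}{p^n-1}L_p(n,\omega^{1-n})$, nonvanishing for $p$ regular, say) but delicate. I will instead argue purely motivically.

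The plan is to reduce to the case $Z=\Spec\ZZ$ and use the Lie coalgebra of indecomposables. Restriction along $U_Z\surj U_{\Spec\ZZ}$ is a Hopf-algebra inclusion $A(\Spec\ZZ)\hookrightarrow A(Z)$, compatible with the definition of $\zeta^{\fu}$ because the basepoints $0,1$ are $\ZZ$-integral. So it suffices to treat $Z=\Spec\ZZ$.

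There, $\fn(\Spec\ZZ)$ is free on generators $\sigma_{2k+1}$ of degree $-(2k+1)$, and the indecomposables $A_{>0}/A_{>0}^2$ in degree $n$ are dual to $\fn^{\ab}_{-n}=\Ext^1(\QQ(0),\QQ(n))^\vee\cong K_{2n-1}(\ZZ)_\QQ^\vee$, which is one-dimensional for odd $n\ge3$. I need to show that the image of $\zeta^{\fu}(n)$ in this quotient is nonzero. Equivalently, I will show that $\zeta^{\fu}(n)$ pairs nontrivially with $\sigma_n$, i.e. that the derivation $\sigma_n$ does not kill $\zeta^{\fu}(n)=\int e_0^{n-1}e_1$ over $u(\gamma^{\dR}_{0,1})$.

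Compute this derivation: $\sigma_n$ acts on $\pi_1^{\dR}(X;0,1)$, and $\gamma^{\dR}_{0,1}$ corresponds to the series $1$ by \Cref{integral-over-canonical-de-rham-path}. The derivative is then the $e_0^{n-1}e_1$-coefficient of the image of $1$ under the infinitesimal action. By Ihara/Deligne--Goncharov, that image is a Lie polynomial $\psi_n$ of weight $n$ in $e_0,e_1$ whose depth-one part is a nonzero multiple of $\ad(e_0)^{n-1}e_1$. This identification is the standard fact that the $\sigma_n$ act nontrivially in depth one, equivalent to the Soulé/Beilinson element generating $K_{2n-1}(\ZZ)_\QQ$ being realized by the polylogarithm motive $\Li_n$ at $1$; Deligne--Goncharov show $\Ext^1(\QQ(0),\QQ(n))$ is spanned by the class of $\pi_1^{\mot}$'s depth-one quotient, i.e. the polylog extension. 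The coefficient of $e_0^{n-1}e_1$ in $\ad(e_0)^{n-1}e_1$ is $1$, giving a nonzero value.

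Dually, in the polylog quotient, whose Lie algebra is $\QQ e_0\oplus\prod_k\QQ\,\ad(e_0)^{k-1}e_1$, the extension class of $\QQ(0)$ by $\QQ(n)$ cut out by $\Li_n(1)$ is the polylogarithmic extension. Its class in $K_{2n-1}(\ZZ)_\QQ$ is a nonzero multiple of the Borel class; this follows from Beilinson--Deligne, and its complex regulator is $\zeta(n)\neq0$ for $n>1$. A nonsplit extension means the coproduct-reduced coefficient is nonzero in $\Ext^1$, hence $\zeta^{\fu}(n)\notin A_{>0}^2$, and in particular $\zeta^{\fu}(n)\neq 0$.

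The main obstacle is justifying this nonsplitness rigorously with what the paper allows. The cleanest route is to cite the Hodge/Betti realization: the real period $\zeta(n)$, being nonzero and not in the span of products of lower periods up to $(2\pi i)^n$, detects the regulator. The regulator map on $K_{2n-1}(\ZZ)_\QQ$ is injective (Borel) and sends the class to $\zeta(n)\neq0$. So I expect to invoke Borel's regulator theorem together with Deligne--Goncharov's identification of the extension, rather than prove anything new.
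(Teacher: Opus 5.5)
\textbf{The route you lead with is circular.} By \Cref{lem:kummer-cocycle-via-mzv}, the $e_0^{n-1}e_1$-coefficient of $\kappa_1(\sigma_n)$ (your $\psi_n$) is exactly $\langle \sigma_n, \zeta^{\fu}(n)\rangle$. So ``the depth-one part of $\psi_n$ is a nonzero multiple of $\ad(e_0)^{n-1}e_1$'' restates the lemma rather than proving it. The same goes for ``the polylog class is a nonzero multiple of the Borel class.''

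What actually carries the proof is your last paragraph, and that is the paper's argument. Reduce to $\Spec\bZ$, regard $\zeta^{\fu}(n)$ as an extension class $\xi_n$, and detect $\xi_n\neq 0$ through its Hodge realization, whose period is $\zeta(n)$.

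As sketched, that argument still needs three repairs.

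\emph{Why $\zeta^{\fu}(n)$ is an extension class.} The paper uses Goncharov's coproduct formula to show $\zeta^{\fu}(n)$ is primitive. Every term of the reduced coproduct of $\Li_n^{\fu}(z)$ carries a positive power of $\log^{\fu}(z)$, and $\log^{\fu}(1)=0$ because $A(\Spec\bZ)_1=0$. Hence $\zeta^{\fu}(n)$ lies in $E_n(\bZ)\cong\Ext^1(\bQ(0),\bQ(n))$. Your ``extension cut out by $\Li_n(1)$'' silently assumes this two-step structure.

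\emph{The right nonvanishing criterion.} It is not $\zeta(n)\neq 0$. The realization lives in $\Ext^1_{\MHS(\bQ)}(\bQ(0),\bQ(n))\cong \CC/(2\pi i)^n\bQ$, so you need $\zeta(n)\notin(2\pi i)^n\bQ$. This is exactly where oddness enters: for odd $n$, $(2\pi i)^n\bQ\subseteq i\RR$, while $\zeta(n)>0$. For even $n$ the class vanishes by Euler's formula, consistent with $\zeta^{\fu}(2k)=0$. So ``$\zeta(n)\neq0$ for $n>1$'' cannot be the reason. Your condition ``not in the span of products of lower periods'' is neither known nor needed.

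\emph{Borel's injectivity plays no role.} A realization functor takes split extensions to split extensions. A nonzero realization therefore already forces $\xi_n\neq 0$.

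\emph{A side error.} $A_{>0}/A_{>0}^2$ in degree $n$ is dual to $\fn_{-n}$, not to $\fn^{\ab}_{-n}$; it is the primitives $E_n$ that are dual to $\fn^{\ab}_{-n}$. The indecomposables are not one-dimensional in general: $\fn(\bZ)_{-11}$ already contains both $\sigma_{11}$ and $[\sigma_3,[\sigma_3,\sigma_5]]$. So your stated ``equivalence'' is false. Only the implication ``nonzero pairing with some Lie element implies nonzero'' survives, and that is all you actually use.
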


\begin{proof}
	This is part of \cite[Theorem~6.4]{GonGal} but the non-vanishing is not explicitly shown there. We sketch a proof to fill this gap in the literature. We can assume $Z = \Spec(\bZ)$. The motivic zeta value $\zeta^{\fu}(n)$ is a primitive element of $A(Z)$ by Goncharov's coproduct formula, hence it lies in the image of the natural map
	\[ \Ext^1_{\MT(Z,\bQ)}(\bQ(0),\bQ(n)) \hookrightarrow A(Z). \]
	Let $\xi_n$ be the extension class representing $\zeta^{\fu}(n)$. Its image under the Hodge realization functor (to the category $\MHS(\bQ)$ of mixed Hodge structures over~$\bQ$) corresponds to the class of the real number $\zeta(n) = \sum_{k=1}^{\infty} k^{-n}$ under the identification \cite[Theorem~2.256]{mzv-book}
	\[ \Ext^1_{\MHS(\bQ)}(\bQ(0),\bQ(n)) \cong \CC/(2\pi i)^n\bQ. \]
	Since $n$ is odd, $(2\pi i)^n\bQ \subseteq i \bR$, whereas $\zeta(n) \in \bR_{>0}$, so this class is non-trivial. Since $\xi_n$ has non-trivial Hodge realization, $\xi_n \neq 0$, and thus $\zeta^{\fu}(n) \neq 0$.
\end{proof}

\subsubsection{Relations of motivic MZVs}
\label{sec:relations}

Multiple zeta values satisfy many interesting relations. Among the most important ones are the \emph{shuffle relations}:
\begin{equation}
	\label{eq:mzv-shuffle-relation}
	\zeta^{\fu}(w_1) \cdot \zeta^{\fu}(w_2) = \sum_{\pi \in \Sh(|w_1|,|w_2|)} \zeta^{\fu}(\pi(w_1 w_2))
\end{equation}
for all words $w_1$ and $w_2$ in the symbols $e_0, e_1$. Here, the sum is over all shuffle permutations of $w_1 w_2$, i.e., permutations which preserve the order of the letters within each $w_i$. For example, 
\[ \zeta^{\fu}(e_0 e_1) \zeta^{\fu}(e_0 e_0 e_1) = 6 \cdot \zeta^{\fu}(e_0e_0e_0e_1e_1) + 3\cdot \zeta^{\fu}(e_0e_0e_1e_0e_1) + \zeta^{\fu}(e_0e_1e_0e_0e_1), \]
or written differently,
\[ \zeta^{\fu}(2) \zeta^{\fu}(3) = 6 \zeta^{\fu}(4,1) + 3 \zeta^{\fu}(3,2) + \zeta^{\fu}(2,3). \]
The shuffle relations follow from the shuffle product formula for iterated integrals (§\ref{sec:tannakian-iterated-integrals}). There are however additional relations. We will need the identity $\zeta^{\fu}(3) = \zeta^{\fu}(2,1)$, which does not follow from the shuffle relations since these are homogeneous in the \emph{depth} (the number of times the letter $e_1$ occurs, or equivalently the number of arguments), whereas $\zeta^{\fu}(3)$ and $\zeta^{\fu}(2,1)$ have depth~1 and~2, respectively. Instead we will deduce this identity from the following duality theorem for motivic multiple zeta values. Like its analogue for classical (complex) multiple zeta values \cite[§9]{ZagierValues}, it is a simple consequence of the change of variables formula for iterated integrals.

In order to state the duality theorem, define $\tau$ to be the involution on words over $\{e_0, e_1\}$ given by
\[ \tau(e_{i_1} \cdots e_{i_r}) \coloneqq e_{1-i_r} \cdots e_{1-i_1}.  \]
In other words, $\tau$ reverses the order and interchanges $e_0$ and $e_1$. 

\begin{prop}[Duality Theorem]
	\label{duality theorem}
	For all words $w$ over $\{e_0, e_1\}$ we have
	\[ \zeta^{\fu}(w) = \zeta^{\fu}(\tau(w)). \]
\end{prop}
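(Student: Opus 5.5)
The plan is to realize the involution $t \mapsto 1-t$ of $X = \Poneminusthreepoints$ motivically and to read off its effect on iterated integrals. Let $\phi\colon X \to X$ be this automorphism, defined over $\bZ$. It swaps the cusps $0$ and $1$. Its differential at $0$ sends $\vec{1}_0$ to the tangent vector at $1$ with $\rd t = -1$, which is exactly $(-\vec{1})_1$; symmetrically it sends $(-\vec{1})_1$ to $\vec{1}_0$. So $\phi$ exchanges our two tangential basepoints $0$ and $1$. It acts on the forms by $\phi^*(\rd t/t) = \rd t/(1-t)\cdot(-1)\cdot(-1)$. More precisely, $\phi^*(\rd t/t) = -\rd t/(1-t)$ and $\phi^*(\rd t/(1-t)) = -\rd t/t$.

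By functoriality of the motivic path torsors (\cite[§4.2]{DelGon05}), $\phi$ induces an isomorphism
\[ \phi_*\colon \pi_1^{\dR}(X;0,1) \to \pi_1^{\dR}(X;1,0). \]
This isomorphism is $G_Z$-equivariant because it comes from a morphism of motivic path torsors. On the other hand, pullback along $\phi$ is a tensor autoequivalence of $\Un(X)$ that sends $\cE_{\underline{\omega}}$ to $\cE_{\phi^*\underline{\omega}}$. Hence the change-of-variables formula
\[ \int_{\phi_*\gamma} \omega_1\cdots\omega_n = \int_\gamma \phi^*\omega_1\cdots\phi^*\omega_n \]
holds for Tannakian iterated integrals. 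It also matches the fibre identifications $\Fib_x(\cE_{\underline{\omega}}) \cong \bQ^{n+1}$, since these come from the trivial underlying bundle and $\phi$ extends to $\bP^1$.

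Next I check that $\phi_*$ respects canonical paths, i.e.\ $\phi_*(\gamma^{\dR}_{0,1}) = \gamma^{\dR}_{1,0}$. This holds because $\phi$ extends to $\bP^1$ preserving $D$, so it commutes with $\Fib_{\can}$. Alternatively, it follows from the characterisation via $F^0$, which is preserved by $\phi$. Now take $u \in U_Z(R)$ and $w = e_{i_1}\cdots e_{i_n}$. Using equivariance, change of variables, and the inversion formula, I compute
\[ \zeta^{\fu}(w)(u) = \int_{u\gamma^{\dR}_{0,1}} \underline{\omega}_w = \int_{u\gamma^{\dR}_{1,0}} \phi^*\underline{\omega}_w = (-1)^n \int_{u\gamma^{\dR}_{1,0}} \underline{\omega}_{\bar w}. \]
Here $\bar w$ denotes $w$ with $e_0 \leftrightarrow e_1$ swapped. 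The middle step uses that $\phi_*$ is an involution, so it carries $u\gamma^{\dR}_{1,0}$ back to $u\gamma^{\dR}_{0,1}$. Since $u\gamma^{\dR}_{1,0} = (u\gamma^{\dR}_{0,1})^{-1}$ (the action respects inversion and $\gamma^{\dR}_{1,0} = (\gamma^{\dR}_{0,1})^{-1}$), the inversion formula gives
\[ (-1)^n\int_{u\gamma^{\dR}_{1,0}}\underline{\omega}_{\bar w} = (-1)^n(-1)^n\int_{u\gamma^{\dR}_{0,1}}\underline{\omega}_{\tau(w)} = \zeta^{\fu}(\tau(w))(u). \]

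The main obstacle is the bookkeeping, not any deep input. I need to verify carefully that the tangential basepoints are exchanged exactly, with the right sign of the tangent vectors, so that no $\log(-1)$-type correction appears. I also need to justify that functoriality of $\pi_1^{\mot}$ for tangential basepoints yields $U_Z$-equivariance of $\phi_*$. For the latter I will cite Deligne's functoriality of tangential basepoints \cite[§15]{Deligne89}, applied to the automorphism $\phi$ of $(\bP^1, D)$.
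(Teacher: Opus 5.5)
Your proposal is correct and follows essentially the same route as the paper's proof: take the involution $t\mapsto 1-t$, use $\phi^*(\omega_i) = -\omega_{1-i}$, $\phi_*(\gamma_{0,1}^{\dR}) = \gamma_{1,0}^{\dR} = (\gamma_{0,1}^{\dR})^{-1}$ and the change-of-variables formula, then finish with the path inversion formula. You are more explicit than the paper about why $\phi_*$ commutes with the $U_Z$-action and why the tangential basepoints are exchanged exactly, both of which the paper uses tacitly; your first expression for $\phi^*(\rd t/t)$ is garbled, but you correct it immediately.
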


\begin{proof}
	Let $\iota$ be the involution $z \mapsto 1-z$ on $\bP^1 \smallsetminus \{0,1,\infty\}$. Observe that $\iota^*(\rd t/t) = - \rd t/(1-t)$ and $\iota^*(\rd t/(1-t)) = -\rd t/t$. In other words, $\iota^*(\omega_i) = - \omega_{1-i}$ for $i = 0,1$. Observe moreover that the canonical de Rham path $\gamma_{0,1}^{\dR}$ from~$0$ to~$1$ (viewed as tangential basepoints as usual) is mapped under $\iota$ to the canonical de Rham path $\gamma_{1,0}^{\dR}$ from $\iota(0) = 1$ to $\iota(1) = 0$. The inverse of the canonical path is the canonical path, hence we have $\iota(\gamma_{0,1}^{\dR}) = (\gamma_{0,1}^{\dR})^{-1}$. 
	
	Let $w = e_{i_1} \cdots e_{i_r}$ with $i_j \in \{0,1\}$ and let $\sigma$ be an element of $U_Z$ (valued in some $\bQ$-algebra omitted from the notation). Combining the change of variables formula and the path inversion formula for iterated integrals (§\ref{sec:tannakian-iterated-integrals}) we calculate
	\begin{align*}
		\zeta^{\fu}(w)(\sigma) &= \int_{\sigma(\gamma_{0,1}^{\dR})} \omega_{i_1} \cdots \omega_{i_r} \\
		&= \int_{\iota^{-1}(\sigma(\gamma_{0,1}^{\dR}))} (-\omega_{1-i_1}) \cdots (-\omega_{1-i_r}) \\
		&= (-1)^r \int_{\sigma(\gamma_{0,1}^{\dR})^{-1}} \omega_{1-i_1} \cdots \omega_{1-i_r} \\
		&= \int_{\sigma(\gamma_{0,1}^{\dR})} \omega_{1-i_r} \cdots \omega_{1-i_1} \\
		&= \zeta^{\fu}(\tau(w))(\sigma). \qedhere
	\end{align*}
\end{proof}

\begin{cor}
	\label{zeta3 relation}
	We have the identity of motivic multiple zeta values
	\[ \zeta^{\fu}(3) = \zeta^{\fu}(2,1). \]
\end{cor}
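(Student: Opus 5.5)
We will derive the identity directly from the Duality Theorem (\Cref{duality theorem}) applied to a single word. In the word notation of §\ref{sec:motivic-iterated-integrals}, we have
\[ \zeta^{\fu}(3) = \zeta^{\fu}(e_0e_0e_1) \hand \zeta^{\fu}(2,1) = \zeta^{\fu}(e_0e_1e_1), \]
since the tuple $(n_1,\ldots,n_r)$ corresponds to the word $e_0^{n_1-1}e_1\cdots e_0^{n_r-1}e_1$. Thus $(3)$ gives $e_0^2e_1$, and $(2,1)$ gives $e_0e_1\cdot e_1$.

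Next we compute the involution $\tau$ on $w = e_0e_0e_1$. Write $w = e_{i_1}e_{i_2}e_{i_3}$ with $(i_1,i_2,i_3) = (0,0,1)$. Then
\[ \tau(w) = e_{1-i_3}e_{1-i_2}e_{1-i_1} = e_0e_1e_1. \]
Applying \Cref{duality theorem} to $w$ gives $\zeta^{\fu}(e_0e_0e_1) = \zeta^{\fu}(e_0e_1e_1)$, which is exactly $\zeta^{\fu}(3) = \zeta^{\fu}(2,1)$.

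There is no real obstacle; the only point needing care is matching the index and word conventions. We check that $\Li_{n_1,\ldots,n_r}$ puts the block of $n_1$ forms first in the iterated integral, so that $(2,1)$ corresponds to $e_0e_1e_1$ and not to $e_1e_0e_1$. This agrees with the definition in §\ref{sec:multiple-polylogarithms}. Since the duality theorem holds as an identity in $A(Z)$, no further argument about periods is required.
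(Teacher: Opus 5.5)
Your proof is correct and matches the paper's proof, which consists of taking $w = e_0e_0e_1$ in the Duality Theorem. Your computation $\tau(e_0e_0e_1) = e_0e_1e_1$ and your check of the index-to-word convention, under which $(2,1)$ corresponds to $e_0e_1e_1$, are exactly the two details that one-line argument relies on.
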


\begin{proof}
	Take $w = e_0 e_0 e_1$ in the Duality Theorem \ref{duality theorem}.
\end{proof}

\subsection{Chabauty--Kim theory}
\label{sec:chabauty-kim}

As before, let $Z = \Spec(\bZ) \smallsetminus S$ be a dense open subscheme of $\Spec(\bZ)$, and let $X$ denote the thrice-punctured line over~$Z$. We fix a quotient $\pi_1^{\mot}(X) \twoheadrightarrow \Pi$ of the fundamental group in $\MT(Z,\bQ)$. We also fix an auxiliary prime $p \in Z$. The $p$-adic étale realization $\Pi^{\et} \coloneqq \real^{\et}(\Pi)$ is a $G_{\bQ}$-equivariant quotient of the $\bQ_p$-pro-unipotent étale fundamental group $\pi_1^{\et,p}(X)$. The Chabauty--Kim method is centered around the following \emph{Chabauty--Kim diagram} (or \emph{Kim's Cutter}), whose construction we recall briefly. 

\begin{equation}
	\label{eq:ck-diagram}
	\begin{tikzcd}
		X(Z) \dar["\kappa^{\et}_Z"] \rar[hook] & X(\bZ_p) \dar["\kappa^{\et}_p"] \arrow[dr,"\kappa_p^{\dR}"] & \\
		\rH^1_Z(G_{\bQ}, \Pi^{\et})(\bQ_p) \rar["\loc_p"] & \rH^1_f(G_p, \Pi^{\et})(\bQ_p) \rar["\sim"', "\logBK"]& \Pi^{\dR}(\bQ_p).
	\end{tikzcd}
\end{equation}

\subsubsection{Selmer schemes and Kummer maps}
\label{sec:selmer-schemes}

Let $G = G_{\bQ}$ or $G_{\ell}$, the local Galois group at a prime~$\ell$. For a $\bQ_p$-algebra~$R$, define $\rH^1(G, \Pi^{\et})(R)$ as the set of isomorphism classes of $G$-equivariant $\Pi^{\et}$-torsors over~$R$. Restriction from $G_{\bQ}$ to $G_{\ell}$ defines the \emph{localization maps}
\[ \loc_{\ell}\colon \rH^1(G_{\bQ}, \Pi^{\et}) \to \rH^1(G_{\ell}, \Pi^{\et}), \]
which are morphisms of functors on $\bQ_p$-algebras. In the local case with $\ell = p$, denote by $\rH^1_f(G_p, \Pi^{\et})(R)$ the subset of \emph{crystalline} torsors, i.e., those torsors which become trivial after tensoring with Fontaine's period ring~$\Bcris$:
\[ \rH^1_f(G_p, \Pi^{\et})(R) \coloneqq \ker\bigl( \rH^1(G_p, \Pi^{\et})(R) \to \rH^1(G_p, \Pi^{\et})(R \otimes_{\bQ_p} \Bcris)\bigr). \]
On the global side, define
\[ \rH^1_Z(G_{\bQ}, \Pi^{\et})(R) \]
as the subset of $\rH^1(G_{\bQ}, \Pi^{\et})(R)$ consisting of those classes $\xi$ such that $\loc_{\ell}(\xi)$ is unramified (equivalently: trivial) for $\ell \in Z \smallsetminus \{p\}$ and $\loc_p(\xi)$ is crystalline.

The functors $R \mapsto \rH^1_Z(G_{\bQ}, \Pi^{\et})(R)$ and $R \mapsto \rH^1_f(G_p, \Pi^{\et})(R)$ are representable by affine $\bQ_p$-schemes called the \emph{global} and \emph{local Selmer scheme}, respectively. The localization map $\loc_p$ from the global to the local Selmer scheme is automatically a morphism of $\bQ_p$-schemes by Yoneda's Lemma. The vertical maps $\kappa_Z^{\et}$ and $\kappa_p^{\et}$ in Diagram~\eqref{eq:ck-diagram} are the \emph{non-abelian Kummer maps}, sending a point $x$ in $X(Z)$ resp.\ in $X(\bZ_p)$ to the $\bQ_p$-pro-unipotent path torsor $\pi_1^{\et,p}(X;0,x)$ pushed out along the quotient map $\pi_1^{\et,p}(X) \twoheadrightarrow \Pi^{\et}$.
The diagonal map $\kappa_p^{\dR}$ in the Chabauty--Kim diagram is the \emph{de Rham Kummer map} sending $x \in X(\bZ_p)$ to the de Rham loop
\[ \kappa_p^{\dR}(x) \coloneqq (\gamma_{0,x}^{\dR})^{-1} \gamma_{0,x}^{\cris} \in \Pi^{\dR}(\bQ_p) \]
obtained by composing the crystalline path from~$0$ to~$x$ (§\ref{sec:frobenius-invariant-paths}) with the inverse of the canonical de Rham path (§\ref{sec:canonical-paths}), and passing to the quotient $\pi_1^{\dR}(X) \twoheadrightarrow \Pi^{\dR}$. Algebraic functions on $\Pi^{\dR}$ pull back along $\kappa_p^{\dR}$ to Coleman functions on $X(\bZ_p)$. More precisely, functions on $\Pi^{\dR}$ are polynomials in logarithms and multiple polylogarithms, and it follows from the path composition formula and the fact that non-empty integrals over the canonical de Rham path vanish (\Cref{integral-over-canonical-de-rham-path}) that
\begin{align*}
	\log(\kappa_p^{\dR}(x)) &= \log^p(x),\\
	\Li_{n_1,\ldots,n_r}(\kappa_p^{\dR}(x)) &= \Li^p_{n_1,\ldots,n_r}(x).
\end{align*}
Finally, the \emph{non-abelian Bloch--Kato logarithm}
\[ \logBK\colon \rH^1_f(G_p,\Pi^{\et}) \xrightarrow{\sim} \Pi^{\dR}_{\bQ_p} \]
sends a crystalline $G_p$-equivariant $\Pi^{\et}$-torsor $P^{\et}$ over a $\bQ_p$-algebra~$R$ to the loop $(\gamma^{\dR})^{-1}\gamma^{\cris} \in \Pi^{\dR}(R)$, where $\gamma^{\dR}$ and $\gamma^{\cris}$ are the Hodge element resp.\ Frobenius-fixed element of the admissible $\Pi^{\dR}_{\bQ_p}$-torsor $\rD_{\dR}(P^{\et})$ over~$R$ \cite[Proposition~1]{kim09}.

\subsubsection{Chabauty--Kim loci}
\label{sec:ck-loci}

Let $A_{\myCol}(X)$ denote the ring of Coleman functions on $X(\bZ_p)$. The de Rham Kummer map $\kappa_p^{\dR}\colon X(\bZ_p) \to \Pi_{\bQ_p}^{\dR}$ is \emph{Coleman analytic}, i.e., algebraic functions on $\Pi_{\bQ_p}^{\dR}$ pull back to Coleman analytic functions on $X(\bZ_p)$. This defines a homomorphism of $\bQ_p$-algebras
\begin{equation} 
	\label{eq:de-Rham-Kummer-pullback}
	(\kappa_p^{\dR})^{\sharp}\colon \cO(\Pi_{\bQ_p}^{\dR}) \to A_{\myCol}(X).
\end{equation}
If $f \in \cO(\Pi_{\bQ_p}^{\dR})$ is a function which vanishes on the scheme-theoretic image of the Selmer scheme $\rH^1_Z(G_{\bQ}, \Pi^{\et})$ under $\logBK \circ \loc_p$, then, by the commutativity of Diagram~\eqref{eq:ck-diagram}, the pullback $(\kappa_p^{\dR})^{\sharp} f \in A_{\myCol}(X)$ defines a function $X(\bZ_p) \to \bQ_p$ which vanishes on $X(Z)$. This motivates the following definition.

\begin{defn}
	\label{def:CK-locus}
	Let $\cI_{Z,\Pi}^{\dR} \subseteq \cO(\Pi_{\bQ_p}^{\dR})$ be the ideal of functions vanishing on the image of $\rH^1_Z(G_{\bQ}, \Pi^{\et})$ under $\logBK \circ \loc_p$, i.e.
	\[ \cI_{Z,\Pi}^{\dR} = \ker\bigl[ (\logBK \circ \loc_p)^{\sharp}\colon \cO(\Pi_{\bQ_p}^{\dR}) \to \cO(\rH^1_Z(G_{\bQ}, \Pi^{\et})) \bigr]. \]
	The \emph{Chabauty--Kim ideal} $\cI_{Z,\Pi} \subseteq A_{\myCol}(X)$ is the ideal generated by the image of $\cI_{Z,\Pi}^{\dR}$ under the pullback map~\eqref{eq:de-Rham-Kummer-pullback}. Elements of $\cI_{Z,\Pi}$ are called \emph{Kim functions}.
	The \emph{Chabauty--Kim locus} $X(\bZ_p)_{Z,\Pi} \subseteq X(\bZ_p)$ is defined as the vanishing locus of the Chabauty--Kim ideal $\cI_{Z,\Pi}$.
\end{defn}

The Chabauty--Kim locus $X(\bZ_p)_{Z,\Pi}$ is a not necessarily reduced locally analytic subspace of $X(\bZ_p)$. Its underlying set is the set of all points $x \in X(\bZ_p)$ such that $\kappa_p^{\dR}(x)$ is contained in the scheme-theoretic image of $\rH^1_Z(G_{\bQ}, \Pi^{\et})$ under $\logBK \circ \loc_p$; equivalently, $X(\bZ_p)_{Z,\Pi}$ is the common vanishing set of all Kim functions associated to $Z$ and~$\Pi$. If $f$ is a nonzero element of the ideal $\cI_{Z,\Pi}^{\dR}$, then the Kim function $(\kappa_p^{\dR})^{\sharp} f$ is a nonzero analytic function on each residue disc by \cite[Theorem~1]{kim09}, hence it has only finitely many zeros. Thus, if $\cI_{Z,\Pi}^{\dR} \neq 0$ then $X(\bZ_p)_{Z,\Pi}$ is finite.

By construction we have the inclusions
\[ X(Z) \subseteq X(\bZ_p)_{Z,\Pi} \subseteq X(\bZ_p), \]
hence the Chabauty--Kim loci constrain the location of the $Z$-integral points inside $X(\bZ_p)$. When $\Pi$ is the maximal $n$-step nilpotent quotient of $\pi_1^{\mot}(X)$, the associated Chabauty--Kim locus is denoted $X(\bZ_p)_{Z,n}$. This is often called the \emph{depth~$n$} (or \emph{level}~$n$) locus. Since larger fundamental group quotients correspond to smaller Chabauty--Kim loci, we have a descending chain
\[ X(\bZ_p) \supseteq X(\bZ_p)_{Z,1} \supseteq X(\bZ_p)_{Z,2} \supseteq \ldots  \]
of subsets of $X(\bZ_p)$ containing $X(Z)$.
In our specific case where $X = \bP^1 \smallsetminus \{0,1,\infty\}$, Kim proved that $X(\bZ_p)_{Z,n}$ is finite for $n \gg 0$ \cite{KimP1}. Kim's Conjecture states that eventually the Chabauty--Kim locus is not just a finite superset of but precisely equal to the set of $Z$-integral points \cite[§8.1]{nabsd}.

\begin{conj}[Kim's Conjecture]
	\label{conj:kim}
	$X(\bZ_p)_{Z,n} = X(Z)$ for $n \gg 0$.
\end{conj}

While Kim's Conjecture is often formulated as an equality of sets, we interpret it in the slightly stronger sense of equality of locally analytic subspaces of $X(\bZ_p)$, with $X(Z)$ being given the reduced subspace structure. In other words, the conjecture predicts that the Chabauty--Kim ideal vanishes precisely on the $Z$-integral points, and that these are \emph{simple} roots.

\subsubsection{The motivic Selmer scheme}
\label{sec:motivic-selmer-scheme}

Again in the specific case of the thrice-punctured line, one can construct a \emph{motivic Selmer scheme} which provides a $\bQ$-structure for the usual (étale) Selmer scheme and at the same time admits a description in terms of algebraic cocycles which is more amenable to explicit calculations. We continue to denote by $\Pi$ a fixed quotient of the motivic fundamental group $\pi_1^{\mot}(X)$ in $\MT(Z,\bQ)$. Recall that $G_Z = \UZ \rtimes \Gm$ denotes the Tannakian fundamental group of $\MT(Z,\bQ)$ with respect to the de Rham fiber functor (§\ref{sec:mixed-tate-galois-group}). For a $\bQ$-algebra~$R$, define $\rH^1(G_Z, \Pi^{\dR})(R)$ as the set of isomorphism classes of $G_Z$-equivariant $\Pi^{\dR}$-torsors over~$R$. 

\begin{defn}
	\label{def:motivic-selmer-scheme}
	The \emph{motivic Selmer scheme} $\rH^1(G_Z, \Pi^{\dR})$ is the affine $\bQ$-scheme representing the functor $R \mapsto \rH^1(G_Z, \Pi^{\dR})(R)$.
\end{defn}

The representability is shown in \cite[Proposition~6.4]{BrownIntegral}. In §\ref{sec:coordinates-on-selmer-scheme}, we recall the construction of a set of coordinates on $\rH^1(G_Z, \Pi^{\dR})$ which identifies the motivic Selmer scheme more explicitly as an affine space over~$\bQ$ \cite{Luedtke:motivic_selmer_scheme}.

We have a \emph{motivic Kummer map}
\begin{equation}
	\label{eq:kummer-map}
	\kappa_Z\colon X(Z) \to \rH^1(G_Z, \Pi^{\dR})(\bQ)
\end{equation}
which sends $x \in X(Z)$ to the de Rham path torsor $\pi_1^{\dR}(X;0,x)$ pushed out along $\pi_1^{\dR}(X) \twoheadrightarrow \Pi^{\dR}$. It carries a $G_Z$-action since $\pi_1^{\dR}(X;0,x)$ arises as the de Rham realization of a motivic path torsor $\pi_1^{\mot}(X;0,x)$ (§\ref{sec:motivic-fundamental-group}). 

Via the Tannakian formalism, elements of $\rH^1(G_Z,\Pi^{\dR})(R)$ can be identified with isomorphism classes of $\Pi$-torsors over~$R$ in $\MT(Z,\bQ)$. Taking the $p$-adic étale realization of such a torsor results in a $G_{\bQ}$-equivariant $\Pi^{\et}$-torsor which is unramified at all $\ell \in Z \smallsetminus \{p\}$ and crystalline at~$p$. The resulting map
\begin{equation}
	\label{eq:motivic-etale-comparison}
	\rH^1(G_Z, \Pi^{\dR}) \otimes_{\bQ} \bQ_p \to \rH^1_Z(G_{\bQ}, \Pi^{\et}) 
\end{equation}
is an isomorphism of $\bQ_p$-schemes \cite[Theorem~3.2]{BKL:chabauty_kim_sc}. This expresses a compatibility between the motivic and the classical (étale) Chabauty--Kim method.

\subsubsection{Cohomology and Cocycles}
\label{sec:cocycles}

A more explicit description of the motivic Selmer scheme is given in terms of $\bG_m$-equivariant cocycles. Recall that the mixed Tate motivic Galois group $G_Z$ has the structure of a semidirect product $U_Z \rtimes \bG_m$ (§\ref{sec:mixed-tate-galois-group}); in particular, $\bG_m$ acts on $U_Z$ as well as on $\Pi^{\dR}$. For a $\bQ$-algebra~$R$, denote by
\[ \rZ^1(U_Z, \Pi^{\dR})^{\bG_m}(R) \coloneqq \rZ^1((U_Z)_R, \Pi^{\dR}_R)^{\bG_m} \]
the set of \emph{$\bG_m$-equivariant cocycles} $c\colon (U_Z)_R \to \Pi^{\dR}_R$. This means that $c$ is a morphism of $R$-schemes, compatible with the $\bG_m$-action on $U_Z$ and $\Pi^{\dR}$, and satisfying the non-abelian cocycle condition
\[ c(u_1 u_2) = c(u_1) \cdot u_1(c(u_2)) \]
for $u_1, u_2 \in U_Z(R')$ and $R$-algebras~$R'$.

\begin{prop}[{\cite[Theorem~A.4]{corwin:mixed_elliptic}, \cite[§4]{Luedtke:motivic_selmer_scheme}}]
	\label{cocycle-description}
	There is a canonical isomorphism
	\begin{equation}
		\label{eq:H1-cocycles}
		\rH^1(G_Z,\Pi^{\dR}) \cong \rZ^1(U_Z, \Pi^{\dR})^{\bG_m}.
	\end{equation}
\end{prop}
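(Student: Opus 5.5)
The plan is to go through the Tannakian description of $\rH^1(G_Z,\Pi^{\dR})$ as isomorphism classes of $G_Z$-equivariant $\Pi^{\dR}$-torsors. We first show that every such torsor over $R$ is trivial as a $\Pi^{\dR}$-torsor, and that it has a canonical trivialization. This gives a map from torsors to cocycles. We then show that this map is bijective and functorial in $R$.

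\emph{Step 1: a canonical point.} Let $P$ be a $G_Z$-equivariant $\Pi^{\dR}$-torsor over $R$, i.e.\ the de Rham realization of a $\Pi$-torsor in $\MT(Z,\bQ)$ base-changed to $R$. Its coordinate ring $\cO(P)$ carries a $\bG_m$-action, i.e.\ a grading, coming from the weight splitting $G_Z=U_Z\rtimes\bG_m$. Because $\Pi$ is a quotient of $\pi_1^{\mot}(X)$, the weights on $\cO(\Pi^{\dR})$ are nonnegative and the degree-zero part is $R$. I would first check the same holds for $\cO(P)$, by passing to associated gradeds for the weight filtration. Then the augmentation onto the degree-zero part, $\cO(P)\to\cO(P)_0=R$, gives an $R$-point $p_0\in P(R)$. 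Equivalently, $P^{\bG_m}$ is a $\bG_m$-fixed point scheme which is a torsor under $(\Pi^{\dR})^{\bG_m}=1$. This is the analogue of the Hodge path $F^0$ from §\ref{sec:canonical-paths}.

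\emph{Step 2: the cocycle.} Define $c(u)$ by $u(p_0)=p_0\cdot c(u)$, or with the convention matching the cocycle condition, $c(u)=p_0^{-1}u(p_0)$ after identifying $P\cong\Pi^{\dR}$ via $p_0$. Then
\[ c(u_1u_2)=c(u_1)\,u_1(c(u_2)) \]
follows formally from $G_Z$-equivariance of the torsor structure. The cocycle $c$ is $\bG_m$-equivariant because $p_0$ is $\bG_m$-fixed. Isomorphic torsors yield the same cocycle, since an isomorphism preserves the unique $\bG_m$-fixed point. So the cocycle depends only on the class of $P$, with no coboundary ambiguity.

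\emph{Step 3: inverse and functoriality.} Given a $\bG_m$-equivariant cocycle $c$, form the trivial torsor $\Pi^{\dR}_R$ with twisted action $u\ast\pi=c(u)\,u(\pi)$ and $\bG_m$ acting as before. This is $U_Z\rtimes\bG_m$-equivariant: the cocycle condition gives compatibility with $U_Z$, and equivariance of $c$ gives compatibility with the semidirect product. These two constructions are mutually inverse and natural in $R$, so the isomorphism is one of functors and hence of the representing schemes. Equivalently, one can invoke \cite[Theorem~A.4]{corwin:mixed_elliptic} or \cite[§4]{Luedtke:motivic_selmer_scheme}.

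The main obstacle is Step 1: establishing rigorously that a torsor over an arbitrary $\bQ$-algebra $R$ has a unique $\bG_m$-fixed point. I would first prove this for finite-dimensional unipotent quotients by induction along the central series, using that each graded piece is a sum of $\bQ(n)$ with $n\ge1$. Those pieces have no $\bG_m$-invariants and vanishing $\bG_m$-cohomology, since $\bG_m$ is reductive. I would then pass to the limit over quotients.
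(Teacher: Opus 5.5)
Your proposal is correct and follows the paper's argument. The paper likewise takes the unique $\bG_m$-invariant point $\gamma^{\dR}\in P(R)$: existence comes from $\bG_m$ being reductive and $\Pi^{\dR}$ pro-unipotent, and uniqueness from $(\Pi^{\dR})^{\bG_m}=1$, since $\Lie(\Pi^{\dR})$ lies in strictly negative weights. It then sends $[P]$ to the $\bG_m$-equivariant cocycle $u\mapsto(\gamma^{\dR})^{-1}u(\gamma^{\dR})$, which is exactly your Steps 1--2. Your explicit inverse via the twisted trivial torsor and your induction along the central series fill in details the paper leaves as a sketch. In that induction, each step first needs the torsor under the vector-group piece to have an $R$-point, which holds because $\Spec R$ is affine; only then does the vanishing of $\bG_m$-cohomology give a fixed one.
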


\begin{proof}
	We just sketch the construction of the isomorphism~\eqref{eq:H1-cocycles}. Given a $G_Z$-equivariant $\Pi^{\dR}$-torsor~$P$ over some $\bQ$-algebra~$R$, there exists a unique $\bG_m$-invariant point $\gamma^{\dR} \in P(R)$. (The existence boils down to $\bG_m$ being reductive and $\Pi^{\dR}$ pro-unipotent; the uniqueness follows from $(\Pi^{\dR})^{\bG_m} = 1$, a consequence of $\Lie(\Pi^{\dR})$ living in strictly negative weights.) The canonical element $\gamma^{\dR}$ defines a cocycle $c_P\colon (G_Z)_R \to \Pi^{\dR}_R$ by
	\[ c_P(g) = (\gamma^{\dR})^{-1} \cdot g(\gamma^{\dR}). \]
	Since $\gamma^{\dR}$ is $\bG_m$-invariant, the restriction of $c_P$ to $U_Z$ is $\bG_m$-equivariant. The association $[P] \mapsto c_P\vert_{U_Z}$ defines the isomorphism~\eqref{eq:H1-cocycles}.
\end{proof}

For a path torsor $\pi_1^{\mot}(X;x,y)$, the unique $\bG_m$-invariant path in $\pi_1^{\dR}(X;x,y)(\bQ)$ is the canonical de Rham path $\gamma_{x,y}^{\dR}$ from §\ref{sec:canonical-paths}. Indeed, the corresponding algebra homomorphism $(\gamma_{x,y}^{\dR})^{\sharp}\colon \cO(\pi_1^{\dR}(X;x,y)) \to \bQ$ vanishes on $F^1$, hence it is given by projecting onto $\gr^W_0 \cO(\pi_1^{\dR}(X;x,y)) = \bQ$, which is a $\bG_m$-equivariant map. The corresponding Kummer cocycles can be used to give an alternative definition of the motivic iterated integrals from §\ref{sec:motivic-iterated-integrals}. This is the definition used in \cite[§3.2]{CDC:polylog1} for example.

\begin{lemma}
	\label{motivic-integrals-via-kummer-cocycles}
	Let $\alpha$ be an element of $X(Z)$ or a $Z$-integral tangent vector, and let $w$ be a word in the symbols $\{e_0,e_1\}$ such that $\Li_w \in \cO(\Pi^{\dR}) \subseteq \cO(\pi_1^{\dR}(X))$. Let $\kappa_\alpha\colon U_Z \to \Pi^{\dR}$ be the $\bG_m$-equivariant cocycle representing the path torsor $\pi_1^{\mot}(X;0,\alpha)$. Then we have
	\[ \Li_w^{\fu}(\alpha) = \kappa_{\alpha}^{\sharp}(\Li_w). \]
\end{lemma}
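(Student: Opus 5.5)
The plan is to unwind both sides as functions on $U_Z$ and compare their values at an arbitrary point $u \in U_Z(R)$, for $R$ a $\bQ$-algebra.

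First I make the Kummer cocycle explicit. By the proof of \Cref{cocycle-description}, the cocycle attached to the torsor $P = \pi_1^{\mot}(X;0,\alpha)$, pushed out to $\Pi$, is built from the unique $\bG_m$-invariant point of $P(\bQ)$. By the remark following that proposition, this point is the image of the canonical de Rham path $\gamma_{0,\alpha}^{\dR}$. Hence
\[
\kappa_\alpha(u) = (\gamma_{0,\alpha}^{\dR})^{-1}\cdot u(\gamma_{0,\alpha}^{\dR}),
\]
viewed in $\pi_1^{\dR}(X)(R)$ and then projected to $\Pi^{\dR}(R)$. The pullback $\kappa_\alpha^\sharp(\Li_w)$ therefore sends $u$ to $\Li_w$ of this loop. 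Since $\Li_w$ lies in $\cO(\Pi^{\dR})$, we may compute it directly on the loop in $\pi_1^{\dR}(X)$ before projecting.

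Next I evaluate $\Li_w\bigl((\gamma_{0,\alpha}^{\dR})^{-1}\, u(\gamma_{0,\alpha}^{\dR})\bigr)$ using the path composition formula for Tannakian iterated integrals. This expands the integral over the word $\underline{\omega}_w = \omega_1\cdots\omega_n$ as
\[
\sum_{i=0}^n \Bigl(\int_{(\gamma_{0,\alpha}^{\dR})^{-1}} \omega_1\cdots\omega_i\Bigr)\Bigl(\int_{u(\gamma_{0,\alpha}^{\dR})}\omega_{i+1}\cdots\omega_n\Bigr).
\]
By the inversion formula, the integral over $(\gamma_{0,\alpha}^{\dR})^{-1}$ equals, up to sign, the integral over $\gamma_{0,\alpha}^{\dR}$ of the reversed word. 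Note that $(\gamma_{0,\alpha}^{\dR})^{-1}$ is itself the canonical path $\gamma_{\alpha,0}^{\dR}$, since both are defined by composing the isomorphisms with $\Fib_{\can}$. By \Cref{integral-over-canonical-de-rham-path}, all terms with $i \geq 1$ therefore vanish. Only the $i=0$ term survives, namely $\int_{u(\gamma_{0,\alpha}^{\dR})}\underline{\omega}_w$. This is by definition the value of $\Li_w^{\fu}(\alpha)$ at $u$ (§\ref{sec:motivic-iterated-integrals}). The same argument covers $w = e_0$, which gives $\log^{\fu}$.

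The only point needing care is the first step. One must check that the $\bG_m$-invariant element of the pushed-out torsor is exactly the image of $\gamma_{0,\alpha}^{\dR}$. One must also check that the $U_Z$-action on path spaces is compatible with the projection to $\Pi$ and with path composition, so that $u\bigl((\gamma^{\dR})^{-1}\gamma'\bigr)$ is computed factorwise. Both follow from functoriality of the motivic torsor structure: the composition map is a morphism of motivic schemes, hence $G_Z$-equivariant. The first point is precisely the uniqueness statement recalled in the proof of \Cref{cocycle-description}. With these in place, the rest is the formal computation above.
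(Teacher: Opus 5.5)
Your proposal is correct and is essentially the paper's own proof: reduce to the full fundamental group, write $\kappa_\alpha(u) = (\gamma_{0,\alpha}^{\dR})^{-1} u(\gamma_{0,\alpha}^{\dR})$, expand with the path composition formula, and use $(\gamma_{0,\alpha}^{\dR})^{-1} = \gamma_{\alpha,0}^{\dR}$ together with \Cref{integral-over-canonical-de-rham-path} to kill all terms with $i \geq 1$. Your extra appeal to the inversion formula is redundant but harmless, since it also yields the vanishing of those terms.
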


\begin{proof}
	We may assume that $\Pi = \pi_1^{\mot}(X)$ is the full fundamental group. Let $\gamma_{0,\alpha}^{\dR} \in \pi_1^{\dR}(X;0,\alpha)(\bQ)$ be the $\bG_m$-invariant path, so that the Kummer cocycle is given by $\kappa_{\alpha}(u) = (\gamma_{0,\alpha}^{\dR})^{-1} \cdot u(\gamma_{0,\alpha}^{\dR})$. Let $\omega_1\cdots\omega_n$ be the sequence of differential forms corresponding to~$w$. Then for $u \in U_Z$ (valued in some $\bQ$-algebra) we have
	\begin{align*}
		\Li_w(\kappa_{\alpha}(u)) &= \int_{(\gamma_{0,\alpha}^{\dR})^{-1} u(\gamma_{0,\alpha}^{\dR})} \omega_1 \cdots \omega_n \\
		&= \sum_{i=0}^n \left(\int_{\gamma_{\alpha,0}^{\dR}} \omega_1 \cdots \omega_i \right) \left(\int_{u(\gamma_{0,\alpha}^{\dR})} \omega_{i+1} \cdots \omega_n \right) \\
		&= \int_{u(\gamma_{0,\alpha}^{\dR})} \omega_1\cdots\omega_n \\
		&= \Li_w^{\fu}(\alpha)(u),
	\end{align*}
	where we have used the path composition formula for iterated integrals, the fact that $(\gamma_{0,\alpha}^{\dR})^{-1} = \gamma_{\alpha,0}^{\dR}$, and the fact that non-empty integrals along canonical de Rham paths vanish (\Cref{integral-over-canonical-de-rham-path}). 
\end{proof}

\subsubsection{The cocycle evaluation map}
\label{sec:cocycle-evaluation-map}

Recall from §\ref{sec:period-point} that we have a distinguished $p$-adic period point $\eta_p \in U_Z(\bQ_p)$. Evaluation at $\eta_p$ defines the \emph{$p$-adic cocycle evaluation map}
\begin{equation}
	\label{eq:p-adic-cocycle-evaluation}
	\ev_p \colon \rH^1(G_Z, \Pi^{\dR})_{\bQ_p} = \rZ^1(U_Z, \Pi^{\dR})_{\bQ_p}^{\bG_m} \to \Pi_{\bQ_p}^{\dR}.
\end{equation}
Under the identification~\eqref{eq:motivic-etale-comparison} of the motivic and the étale Selmer scheme, the cocycle evaluation map equals the localization map, in other words, the following diagram commutes \cite[Theorem~3.2]{BKL:chabauty_kim_sc}:
\[ 
\begin{tikzcd}
	\rH^1_Z(G_{\bQ}, \Pi^{\et}) \dar["\simeq", "\eqref{eq:motivic-etale-comparison}"'] \rar["\loc_p"] & \rH^1_f(G_p, \Pi^{\et}) \dar["\simeq", "\logBK"'] \\
	\rH^1(G_Z, \Pi^{\dR})_{\bQ_p} \rar["\ev_p"] & \Pi^{\dR}_{\bQ_p}.
\end{tikzcd}
\]
Using this diagram, the construction of Kim functions boils down to finding functions on $\Pi_{\bQ_p}^{\dR}$ which vanish on the image of the cocycle evaluation map. It is possible to perform this step uniformly for all primes~$p \in Z$ by considering the \emph{universal cocycle evaluation map}
\begin{equation}
	\label{eq:universal-cocycle-evaluation}
	\begin{split}
		\ev\colon \rZ^1(U_Z, \Pi^{\dR})^{\bG_m} \times U_Z \to \Pi^{\dR} \times U_Z,\\
		(c,u) \mapsto (c(u), u).
	\end{split}
\end{equation}
One can view this map as evaluating cocycles at the universal point $\eta_{\univ} \in U_Z(A(Z))$, where $A(Z) = \cO(U_Z)$ is the ring of motivic periods. The $p$-adic evaluation map for some prime $p \in Z$ is then obtained by specializing to the $p$-adic period point $\eta_p \in U_Z(\bQ_p)$, or equivalently by tensoring along the period map $\per_p\colon A(Z) \to \bQ_p$. If a function $\Pi^{\dR}_{A(Z)} \to \bA^1_{A(Z)}$ vanishes on the scheme-theoretic image of the universal cocycle evaluation map~\eqref{eq:universal-cocycle-evaluation}, specializing to~$\eta_p$ and pulling back along $\kappa_p^{\dR}$ results in a Kim function $X(\bZ_p) \to \bQ_p$ (§\ref{sec:ck-loci}). Functions arising in this way are called \emph{motivic Kim functions}. If the $p$-adic period conjecture holds (\Cref{conj:period-conjecture}), the whole Chabauty--Kim ideal $\cI_{Z,\Pi}$ is generated by motivic Kim functions and the Chabauty--Kim locus $X(\bZ_p)_{Z,\Pi}$ agrees with the \emph{motivic Chabauty--Kim locus} considered in \cite[§2.5]{CDC:polylog1} but a priori the former could be smaller, cf. Remark 2.28 of loc.\ cit. and \cite[Remark~3.3]{BKL:chabauty_kim_sc}.

\subsubsection{Normalized generators of the motivic Lie algebra}
\label{sec:normalized-generators}

Recall that the Lie algebra $\fn(Z)$ is freely generated by a set $\Sigma = \{\tau_{\ell}\}_{\ell \in S} \cup \{\sigma_3,\sigma_5,\sigma_7,\ldots\}$ with $\tau_{\ell}$ living in half-weight~$-1$ and $\sigma_n$ living in half-weight~$-n$ (§\ref{sec:mixed-tate-galois-group}). 
The generators $\sigma_n$ for $n \geq 3$ odd are not canonical. However, they can be chosen to satisfy
\begin{equation} \label{eq:sigma-normalisation}
	\langle \sigma_n, \zeta^{\fu}(n) \rangle = 1.
\end{equation}
This is the content of \cite[Proposition~4.1]{CDC:polylog1}, whose proof we briefly summarize. Define $E_n(Z)$ as the space of primitive elements in $A(Z)_n$. There is a canonical isomorphism
\[ E_n(Z) \cong \Ext^1_{\MT(Z,\bQ)}(\bQ(0),\bQ(n)) \]
and $E_n(Z)$ is called the subspace of \emph{extensions}. The Lie algebra $\fn(Z)$ embeds into the dual space $A(Z)^{\vee}$ as the space of derivations at the identity. Restricting to the extension space $E_n(Z)$ gives a surjective linear map $\fn(Z)_{-n} \twoheadrightarrow E_n^{\vee}$ which is canonically isomorphic to the abelianization map $\fn(Z)_{-n} \twoheadrightarrow \fn(Z)_{-n}^{\ab}$. Thus, an element of $\fn(Z)_{-n}^{\ab}$ is given by specifying its values on a basis of the extension space $E_n(Z)$. When $n \geq 3$ is odd, $E_n(Z)$ is one-dimensional and spanned by the motivic zeta value $\zeta^{\fu}(n)$. The fact that $\zeta^{\fu}(n)$ is primitive follows from the coproduct formula \cite[Proposition~3.1]{CDC:polylog1}; the fact that $\zeta^{\fu}(n)$ is non-zero is \Cref{motivic-zetas-nonzero}. Thus, $\zeta^{\fu}(n) \mapsto 1$ defines a normalized generator of $\fn(Z)_{-n}^{\ab} = E_n(Z)^{\vee}$, and we choose $\sigma_n \in \fn(Z)_{-n}$ as an arbitrary lift. 

The generators $\{\tau_{\ell}\}_{\ell \in S}$ in half-weight~$-1$ are canonical and can be defined as the dual basis of the basis $\{\log^{\fu}(\ell)\}_{\ell \in S}$ of $E_1(Z) = A_1(Z)$ given by motivic logarithms.

Let $\eta_{\univ} \in U_Z(A(Z))$ be the \emph{universal point}, i.e., the $A(Z)$-valued point corresponding to the identity $\Spec A(Z) \to U_Z$, and let 
\begin{equation}
	\label{eq:universal-point-expansion}
	\eta_{\univ} = \sum_u a_u u \in A(Z)\llangle \Sigma \rrangle
\end{equation}
be its expansion as a group-like element in the chosen generators $\Sigma$ of $U_Z$, with $u$ running over all words on~$\Sigma$. Thus, $a_u \in A(Z)$ is the coefficient extraction functional $f_u$ (\Cref{def:coeff-extraction-functions}) sending a point of $U_Z$ to its $u$-coefficient when expanded as a group-like power series. By~\eqref{eq:shuffle-product}, the elements $a_u$ satisfy the \emph{shuffle identity} 
\begin{equation}
	\label{eq:shuffle-identity}
	a_{u_1} a_{u_2} = \sum_{\pi \in \Sh(|u_1|,|u_2|)} a_{\pi(u_1 u_2)}
\end{equation}
for all words $u_1,u_2$ on~$\Sigma$. Applying the shuffle identity in the case of a power $a_{\sigma}^n$ yields
\begin{equation}
	\label{eq:shuffle-identity-for-power}
	a_{\sigma^n} = \frac1{n!} a_{\sigma}^n
\end{equation}
for all $\sigma \in \Sigma$ and $n \geq 0$.

A single-letter motivic period $a_{\sigma}$, for $\sigma \in \Sigma$ in half-weight~$-n$, is determined by the image of $\sigma$ in the abelianization $\fn(Z)^{\ab}$, i.e., by the restriction of $\sigma$ to the extension space $E_n(Z)$.

\begin{prop}[{\cite[Proposition~4.1]{CDC:polylog1}}]
	\label{single-letter-periods}
	With normalized generators $\Sigma = \{\tau_{\ell}\}_{\ell \in S} \cup \{\sigma_3,\sigma_5,\sigma_7,\ldots\}$ of $\fn(Z)$ chosen as above, we have
	\begin{align}
		\label{eq:a-tau}
		a_{\tau_{\ell}} &= \log^{\fu}(\ell) \quad \text{for $\ell \in S$},\\
		\label{eq:a-sigma}
		a_{\sigma_n} &= \zeta^{\fu}(n) \quad \text{for $n \geq 3$ odd}. 
	\end{align}
\end{prop}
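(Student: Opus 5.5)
The plan is to identify the single-letter coefficients $a_\sigma$ directly from the definition of the universal point and of the normalized generators. For $\sigma \in \Sigma$ the coefficient extraction functional $f_\sigma = a_\sigma \in A(Z)$ lies in $A(Z)_n$, where $n$ is the half-weight of $\sigma$. This is clear because $\Gm$ acts on $\Sigma$ by weights, so the grading on $A(Z)$ is compatible with word length weighted by the half-weights.

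First step: show that $a_\sigma$ is primitive. The coproduct on $A(Z) = \cH(U_Z)^\vee$ is dual to the multiplication of power series. Hence
\[ \Delta(f_w) = \sum_{w = w_1 w_2} f_{w_1} \otimes f_{w_2}, \]
and for a single letter this gives $\Delta(a_\sigma) = a_\sigma \otimes 1 + 1 \otimes a_\sigma$. So $a_\sigma \in E_n(Z)$.

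Second step: pair $a_\sigma$ against generators. The Lie algebra $\fn(Z)$ pairs with $A(Z)$ as derivations at the identity. A Lie element $X = \sum_w c_w w$, viewed as a functional on $A(Z) = \cH^\vee$, sends $f_w$ to $c_w$. For a generator $\sigma' \in \Sigma$ this gives $\langle \sigma', a_\sigma\rangle = \delta_{\sigma,\sigma'}$. It follows that $a_\sigma$ pairs to zero with all brackets, since brackets contain no single-letter words.

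Third step: compare with the normalized elements. The restriction map $\fn(Z)_{-n} \to E_n(Z)^\vee$ is identified with abelianization, and $\fn(Z)^{\ab}_{-n}$ has basis the images of the generators in half-weight $-n$. The preceding step therefore says that $a_\sigma$ is the element of $E_n(Z)$ dual to the basis given by the images of the generators of half-weight $-n$.

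For $n \ge 3$ odd, $E_n(Z)$ is one-dimensional, spanned by $\zeta^{\fu}(n) \neq 0$ by \Cref{motivic-zetas-nonzero}, and $\sigma_n$ is the only generator of half-weight $-n$. We have $\langle\sigma_n,\zeta^{\fu}(n)\rangle = 1$ by \eqref{eq:sigma-normalisation} and $\langle \sigma_n, a_{\sigma_n}\rangle = 1$. A linear functional that is nonzero on a one-dimensional space determines elements by their values, so $a_{\sigma_n} = \zeta^{\fu}(n)$.

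For $n = 1$, the generators $\tau_\ell$ are by definition dual to the basis $\{\log^{\fu}(\ell)\}$ of $E_1(Z)$. Both $\{a_{\tau_\ell}\}$ and $\{\log^{\fu}(\ell)\}$ are then dual to $\{\tau_\ell\}$ under a perfect pairing, so they agree.

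The main care point is the pairing convention in the second step. One must check that the embedding $\fn(Z) \subset A(Z)^\vee$ used to define the normalization agrees with the embedding of Lie-like power series into $\cH(U_Z)$ given by coefficient extraction. This holds because both come from the canonical identification $\cH(U_Z) = \cO(U_Z)^\vee$.
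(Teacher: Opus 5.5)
Your proposal is correct and follows essentially the argument the paper relies on. The paper cites the result and uses the preceding discussion of normalized generators. As there, a single-letter coefficient $a_\sigma$ is primitive, hence lies in $E_n(Z)$, and it pairs with $\fn(Z)$ through the abelianization with $\langle \sigma', a_\sigma\rangle = \delta_{\sigma,\sigma'}$; the normalizations of $\tau_\ell$ and $\sigma_n$ then force $a_{\tau_\ell} = \log^{\fu}(\ell)$ and $a_{\sigma_n} = \zeta^{\fu}(n)$.
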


\subsubsection{Coordinates on the motivic Selmer scheme}
\label{sec:coordinates-on-selmer-scheme}

We recall the construction of a set of coordinates on the motivic Selmer scheme $\rH^1(G_Z, \Pi^{\dR})$ which exhibit it as an affine space and allow us to derive formulas for the cocycle evaluation map~\eqref{eq:p-adic-cocycle-evaluation}. Such coordinates were constructed in \cite[§3.3]{CDC:polylog1} in the case of the polylogarithmic quotient and in \cite{Luedtke:motivic_selmer_scheme} in the general case. The construction depends on a choice of free generators $\Sigma = \{\tau_{\ell}\}_{\ell \in S} \cup \{\sigma_{2k+1}\}_{k\geq 1}$ of the Lie algebra $\fn(Z)$ (§\ref{sec:normalized-generators}). The $\tau_{\ell}$ are canonical but the $\sigma_{2k+1}$ are not. Assume that $\pi_1^{\mot}(X) \twoheadrightarrow \Pi_n$ is the quotient by the $(n+1)$th step of the descending central filtration. A basis of the associated Lie algebra $\Lie(\Pi_n^{\dR})$ is given by the standard bracketings $\lambda(w)$ of the Lyndon words $w$ of length~$\leq n$ in the symbols $\{e_0,e_1\}$ (§\ref{sec:lyndon-basis}). Denote the dual basis by $\{L_w\}$. By \Cref{cocycle-description}, the motivic Selmer scheme $\rH^1(G_Z, \Pi_n^{\dR})$ can be identified with the space of $\bG_m$-equivariant cocycles $U_Z \to \Pi_n^{\dR}$, which can in turn be identified with the space of graded Lie algebra cocycles $c\colon \fn(Z) \to \Lie(\Pi_n^{\dR})$. Such a cocycle is completely determined by the images of the generators in~$\Sigma$. Set
\begin{align*}
	x_{\ell}(c) &\coloneqq L_{e_0}(c(\tau_{\ell})), \\
	y_{\ell}(c) &\coloneqq L_{e_1}(c(\tau_{\ell})), \\
	z_w(c) &\coloneqq L_w(c(\sigma_{2k+1}))
\end{align*}
for $\ell \in S$, $k \geq 1$ and Lyndon words~$w$ of length $2k+1$, with $1 \leq k \leq \lfloor (n-1)/2 \rfloor$.

\begin{thm}[{\cite[Theorem~6.4]{Luedtke:motivic_selmer_scheme}}]
	\label{coordinates-on-selmer-scheme}
	The coordinates $x_{\ell}$, $y_{\ell}$, $z_w$ define an isomorphism
	\[ \rH^1(G_Z, \Pi_n^{\dR}) \cong \Spec \bQ[(x_{\ell})_{\ell \in S}, (y_{\ell})_{\ell \in S}, (z_w)_w]. \]
\end{thm}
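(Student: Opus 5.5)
We identify the motivic Selmer scheme with the space of $\bG_m$-equivariant cocycles via \Cref{cocycle-description}, and then pass to Lie algebras. Since $U_Z$ and $\Pi_n^{\dR}$ are pro-unipotent with compatible $\bG_m$-actions, taking derivatives at the identity gives a bijection, functorial in the $\bQ$-algebra $R$, between $\bG_m$-equivariant cocycles $U_Z \to \Pi_n^{\dR}$ and graded Lie algebra $1$-cocycles $\fn(Z) \to \Lie(\Pi_n^{\dR})$. Here $\fn(Z)$ acts on $\Lie \Pi_n^{\dR}$ through the derivative of the $U_Z$-action. The inverse direction integrates a Lie cocycle to a homomorphism $U_Z \to \Pi_n^{\dR} \rtimes U_Z$ that is a section over $U_Z$. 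This uses the equivalence between pro-unipotent groups and pro-nilpotent Lie algebras, applied to the semidirect product. It yields an isomorphism of functors, hence of affine $\bQ$-schemes.

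Next we use freeness of $\fn(Z)$ on $\Sigma$. A Lie algebra $1$-cocycle $c\colon \fn \to M$ is the same as a Lie algebra homomorphism $\fn \to M \rtimes \fn$ lifting the identity. Because $\fn(Z) = \fn(\Sigma)$ is free pro-nilpotent, the universal property says such a lift is determined by arbitrary images $c(\sigma) \in M$ of the generators. The only requirement is continuity, i.e.\ that all but finitely many $c(\sigma)$ vanish modulo each finite quotient. Here this is automatic from the grading.

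Now impose $\bG_m$-equivariance, which means $c$ preserves half-weights. The generator $\tau_\ell$ has half-weight $-1$, so $c(\tau_\ell)$ must lie in $\Lie(\Pi_n^{\dR})_{-1} = \bQ e_0 \oplus \bQ e_1$. The generator $\sigma_{2k+1}$ has half-weight $-(2k+1)$, so $c(\sigma_{2k+1})$ must lie in the span of the $\lambda(w)$ with $w$ Lyndon of length $2k+1 \le n$. For $2k+1 > n$ we get $c(\sigma_{2k+1}) = 0$. These graded pieces have exactly the Lyndon bases, with dual coordinates $L_w$. Hence $c \mapsto (x_\ell, y_\ell, z_w)$ is a bijection onto affine space, naturally in $R$, giving the stated isomorphism.

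The main obstacle is the first step: the passage from group cocycles to Lie cocycles for pro-objects, including the $R$-valued and continuity issues. I would handle it by working with the semidirect product $\Pi_n^{\dR} \rtimes U_Z$. Equivariant cocycles correspond to $\bG_m$-equivariant sections of $\Pi_n^{\dR} \rtimes U_Z \to U_Z$. Sections of pro-unipotent groups correspond to Lie sections, by the Lie correspondence over $R$.
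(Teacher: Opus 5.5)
Your proposal is correct and follows the same route the paper indicates. The paper cites the result from Lüdtke's Theorem~6.4, and the paragraph before the statement sketches exactly your steps: identify $\rH^1(G_Z,\Pi_n^{\dR})$ with $\bG_m$-equivariant cocycles via \Cref{cocycle-description}, pass to graded Lie algebra cocycles $\fn(Z)\to\Lie(\Pi_n^{\dR})$, and use freeness of $\fn(Z)$ on $\Sigma$ so that a cocycle is determined by arbitrary homogeneous images of the generators, read off in the Lyndon basis.
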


Any Lyndon word $w$ of length $\geq 2$ can be written as $w = e_0^{k_1-1}e_1\cdots e_0^{k_r-1}e_1$ and we use $z_{k_1,\ldots,k_r}$ as an alternative notation for $z_w$.

\begin{example}
	\label{ex:coordinates-depth-4}
	Taking $n = 4$, the motivic Selmer scheme $\rH^1(G_Z, \Pi_4^{\dR})$ is an affine space of dimension $2\# S + 2$ with coordinates $(x_{\ell})_{\ell \in S}$, $(y_{\ell})_{\ell \in S}$, $z_3$, $z_{2,1}$. Here, $z_3$ and $z_{2,1}$ correspond to the two Lyndon words $e_0e_0e_1$ and $e_0e_1e_1$ of length~$3$.
\end{example}

On the image of the motivic Kummer map \eqref{eq:kummer-map}, the coordinates $x_{\ell}$ and $y_{\ell}$ are simply given by $\ell$-adic valuations:

\begin{lemma}
	\label{xy-on-kummer}
	Assume that the generators $\tau_{\ell} \in \fn(Z)_{-1}$ are dual to the motivic logarithms $\log^{\fu}(\ell)\in A_1(Z)$ ($\ell \in S$). Then, for all $\alpha \in X(Z)$, the element $\kappa_Z(\alpha) \in \rH^1(G_Z,\Pi_n^{\dR})$ satisfies
	\begin{align*}
		x_{\ell}(\kappa_Z(\alpha)) &= v_{\ell}(\alpha), \\
		y_{\ell}(\kappa_Z(\alpha)) &= -v_{\ell}(1-\alpha)	
	\end{align*}
	for all $\ell \in S$.
\end{lemma}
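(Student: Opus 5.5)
The coordinates $x_\ell$ and $y_\ell$ depend only on the image of $\kappa_Z(\alpha)$ in the abelianization $\Pi^{\dR}_1$, so I will work there. The Lie algebra of $\Pi_1^{\dR}$ is spanned by $e_0, e_1$, with dual basis $L_{e_0} = \log$ and $L_{e_1} = \Li_1$. The key observation is that on $\Pi_1^{\dR}$, which is abelian and pure of half-weight $-1$, the group $U_Z$ acts trivially. Hence the cocycle $\kappa_\alpha$ restricted to $U_Z$ is a $\bG_m$-equivariant homomorphism $U_Z \to \Pi_1^{\dR} \cong \bG_a^2$, and its derivative sends $\tau_\ell \mapsto x_\ell e_0 + y_\ell e_1$ and kills $\sigma_{2k+1}$ for weight reasons.

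To compute $x_\ell$, I compose $\kappa_\alpha$ with the additive function $\log = L_{e_0}$ on $\Pi_1^{\dR}$. By \Cref{motivic-integrals-via-kummer-cocycles}, the pullback $\kappa_\alpha^\sharp(\log)$ equals $\log^{\fu}(\alpha) \in A_1(Z)$. This function on $U_Z$ is a homomorphism to $\bG_a$, so it is primitive, and its derivative at the identity evaluated at $\tau_\ell$ equals $L_{e_0}(c(\tau_\ell)) = x_\ell$. Since the $\tau_\ell$ are dual to the basis $\{\log^{\fu}(\ell)\}$ of $A_1(Z)$, we have $\langle \tau_\ell, f\rangle = $ the $\log^{\fu}(\ell)$-coefficient of $f$.

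So it remains to show that $\log^{\fu}(\alpha) = \sum_{\ell\in S} v_\ell(\alpha)\log^{\fu}(\ell)$, up to a torsion sign which vanishes rationally. This follows from the multiplicativity $\log^{\fu}(ab) = \log^{\fu}(a)+\log^{\fu}(b)$ for $a,b \in \Oc(Z)^\times$, together with $\log^{\fu}(-1)=0$. The $\log^{\fu}(-1)=0$ holds because the Kummer class of $-1$ is torsion, so it vanishes in $\Ext^1(\QQ(0),\QQ(1))\otimes\QQ = \Oc(Z)^\times\otimes\QQ$. Multiplicativity is the statement that $\alpha \mapsto \log^{\fu}(\alpha)$ is the Kummer map $\Oc(Z)^\times \to \Ext^1_{\MT}(\QQ(0),\QQ(1))$, which is a homomorphism. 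I expect this identification to be the main obstacle. I would justify it either through the standard identification of the $\bG_m$ path torsor with the Kummer motive, or via the path-composition formula applied to the $\bG_m$-paths $0\to a\to ab$ combined with scaling invariance of $\rd t/t$.

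For $y_\ell$, the argument is the same using $L_{e_1} = \Li_1$. Its pullback is $\Li_1^{\fu}(\alpha) = -\log^{\fu}(1-\alpha)$, and $1-\alpha \in \Oc(Z)^\times$ since $\alpha \in X(Z)$. By the same expansion, this equals $-\sum_\ell v_\ell(1-\alpha)\log^{\fu}(\ell)$, so pairing with $\tau_\ell$ gives $y_\ell = -v_\ell(1-\alpha)$.
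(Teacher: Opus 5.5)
Your proposal is correct and follows essentially the same route as the paper: both identify $x_\ell$ and $y_\ell$ with the pairings $\langle \kappa_\alpha^\sharp \Li_{e_i}, \tau_\ell\rangle$, use \Cref{motivic-integrals-via-kummer-cocycles} to rewrite these as pairings of $\tau_\ell$ with $\log^{\fu}(\alpha)$ and with $\Li_1^{\fu}(\alpha) = -\log^{\fu}(1-\alpha)$, and then expand in the dual basis $\{\log^{\fu}(\ell)\}_{\ell \in S}$. The differences are cosmetic. You reduce to the abelianization $\Pi_1^{\dR}$, where the paper instead passes to the full fundamental group. You also justify the expansion $\log^{\fu}(\alpha) = \sum_\ell v_\ell(\alpha)\log^{\fu}(\ell)$ via multiplicativity and the vanishing of $\log^{\fu}$ on $\pm 1$, which the paper uses without comment.
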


\begin{proof}
	Let $\kappa_{\alpha}\colon U_Z \to \Pi_n^{\dR}$ be the $\bG_m$-equivariant cocycle representing $\kappa_Z(\alpha)$, so that $x_{\ell}(\kappa_Z(\alpha)) = L_{e_0}(\kappa_{\alpha}(\tau_{\ell}))$ and $y_{\ell}(\kappa_Z(\alpha)) = L_{e_1}(\kappa_{\alpha}(\tau_{\ell}))$ by definition of $x_{\ell}, y_{\ell}$.
	We may assume that $\Pi_n = \Pi_{\infty} = \pi_1^{\mot}(X)$ is the full fundamental group. The Lie algebra $\Lie(\Pi_{\infty}^{\dR})$ is the subspace of primitive elements in $\cO(\Pi_{\infty}^{\dR})^{\vee} = \bQ\llangle e_0,e_1 \rrangle$, and the linear functional $L_{e_i}$ is the restriction of the coefficient extraction functional $f_{e_i}\colon \bQ\llangle e_0,e_1 \rrangle \to \bQ$, which agrees with the function $\Li_{e_i} \in \cO(\Pi_{\infty}^{\dR})$, for $i=0,1$ (\Cref{paths-as-series}). We obtain
	\[ L_{e_i}(\kappa_{\alpha}(\tau_{\ell})) = \langle \kappa_{\alpha}^{\sharp} \Li_{e_i}, \tau_{\ell} \rangle = \langle \Li_{e_i}^{\fu}(\alpha), \tau_{\ell} \rangle, \]
	where $\kappa_{\alpha}^{\sharp} \Li_{e_i} = \Li_{e_i}^{\fu}(\alpha)$ by \Cref{motivic-integrals-via-kummer-cocycles}. We have
	\[ \Li_{e_0}^{\fu}(\alpha) = \log^{\fu}(\alpha) = \sum_{\ell' \in S} v_{\ell'}(\alpha) \log^{\fu}(\ell') \]
	and 
	\[ \Li_{e_1}^{\fu}(\alpha) = \Li_1^{\fu}(\alpha) = -\log^{\fu}(1-\alpha) = -\sum_{\ell' \in S} v_{\ell'}(1-\alpha) \log^{\fu}(\ell'), \]
	and pairing with $\tau_{\ell}$ yields the claimed valuations $v_{\ell}(\alpha)$ and $-v_{\ell}(1-\alpha)$, respectively.
\end{proof}

\subsubsection{Refined Chabauty--Kim theory}
\label{sec:refined-chabauty-kim}

We also consider a strengthening of the Chabauty--Kim method developed by Betts--Dogra \cite{BD:refined} based on the notion of \emph{refined Selmer schemes}. We recall the definition of refined Selmer schemes and associated refined Chabauty--Kim loci, and define a motivic variant with $\bQ$-coefficients.

Let $S$ be a finite set of primes, $Z = \Spec(\bZ[1/S])$, and let $p$ be a prime not contained in~$S$. Let $\Pi$ be a quotient of the unipotent motivic fundamental group of $X$ which surjects onto $\rH_1^{\mot}(X)$. For $\ell \in S$, we let $\fk{S}_{\ell}$ be the Zariski closure of the image of the $p$-adic unipotent local Kummer map at $\ell$
\[
\ka_{\ell}:X(\QQ_{\ell}) \to \rH^1(G_{\ell}, \Pi^{\et});
\]
for $\ell \notin S$ we let $\fk{S}_{\ell}$ be the Zariski closure of $\ka_{\ell} \big( X(\ZZ_{\ell}) \big)$. Recall that $\m{loc}_{\ell}$ denotes the localization map
\[
\rH^1_Z(G_{\bQ}, \Pi^{\et}) \to \rH^1(G_{\ell}, \Pi^{\et}) 
\]
from the global étale Selmer scheme to the local Selmer scheme at~$\ell$ (§\ref{sec:selmer-schemes}).

\begin{defn}
	\label{20105B}
	The \emph{refined $p$-adic \'etale Selmer scheme} of Betts--Dogra \cite{BD:refined} is the closed subscheme of the global $p$-adic \'etale Selmer scheme $\rH^1_Z(G_{\bQ}, \Pi^{\et})$ defined by
	\[
	\Sel_{Z,\Pi}^{\et,\min}(X) := \bigcap_{\ell} \m{loc}_{\ell}^{-1} \fk{S}_{\ell},
	\]
	where the intersection runs over all primes $\ell$. The \emph{refined Chabauty--Kim ideal} $\cI_{Z,\Pi}^{\min}$ and the \emph{refined Chabauty--Kim locus} $X(\bZ_p)_{Z,\Pi}^{\min}$ are defined as in \Cref{def:CK-locus} but using the refined Selmer scheme in place of the whole Selmer scheme; i.e., $\cI_{Z,\Pi}^{\min}$ is the ideal of $A_{\myCol}(X)$ generated by all $(\kappa_p^{\dR})^{\sharp} f$ for $f \in \cO(\Pi_{\bQ_p}^{\dR})$ vanishing on the image of $\Sel_{Z,\Pi}^{\et,\min}(X)$ under $\logBK \circ \loc_p$. 
\end{defn}

The refined Chabauty--Kim locus is potentially smaller than the unrefined one but we still have the inclusion
\[ X(Z) \subseteq X(\bZ_p)_{Z,\Pi}^{\min} \subseteq X(\bZ_p) \]
since the global Kummer map $\kappa_Z^{\et}$ in the Chabauty--Kim diagram~\eqref{eq:ck-diagram} lands in the refined Selmer scheme by construction. It therefore provides potentially stronger constraints on the location of the $Z$-integral points inside the $p$-adic points. The ``Refined Kim's Conjecture'' (which is weaker than Kim's original conjecture) asserts that $X(\bZ_p)_{Z,\Pi}^{\min} = X(Z)$ for sufficiently large~$\Pi$.

\begin{rmk}
	\label{rem:refinement-conditions}
	At primes $\ell \not \in S$, the local refined Selmer schemes $\fk{S}_{\ell}$ are given by
	\[ \fk{S}_{\ell} = \begin{cases}
		\emptyset, & \text{if $\ell = 2$},\\
		\{\ast\}, & \text{if $\ell \neq 2,p$},\\
		\rH^1_f(G_p, \Pi^{\et}), & \text{if $\ell = p \neq 2$}.
	\end{cases} \]
	Thus, the only additional local conditions defining the global refined Selmer scheme inside $\rH^1_Z(G_{\bQ},\Pi^{\et})$ are at the primes inside~$S$ (and possibly at $2$). 
\end{rmk}

Recall from Eq.~\eqref{eq:motivic-etale-comparison} that the unrefined motivic Selmer scheme provides the $p$-adic \'etale Selmer scheme $\rH^1_Z(G_{\bQ}, \Pi^{\et})$ with a canonical $\bQ$-structure, i.e., the evident map is an isomorphism of finite-type affine schemes
\begin{equation}
	\label{eq:motivic-etale-comparison-again}
	\rH^1(G_Z, \Pi^{\dR})_\Qp \cong \rH^1_Z(G_{\bQ}, \Pi^{\et}).
\end{equation}
It is natural to ask whether the refined $p$-adic étale Selmer scheme admits a $\bQ$-structure as a subscheme of the motivic Selmer scheme as well. This is indeed the case; however, as the following remark shows, even if one assumes motivic conjectures, the naively expected construction of such a $\QQ$-structure does not work.

\begin{rmk}
	\label{20105D}
	Assume Beilinson--Soul\'e vanishing holds for $\QQ_{\ell}$ so that we have a Tannakian category of mixed Tate motives over $\Spec \QQ_{\ell}$ with fundamental group $G_{\ell}^{\rMT}$. Then one might expect to be able to substitute $\rH^1(G_{\ell}^{\rMT}, \Pi^{\dR})$ for $\rH^1(G_{\ell}, \Pi^{\et})$ in Definition \ref{20105B}, and in this way to obtain a motivic version of the refined Selmer scheme of Betts--Dogra. However, the functor on $\bQ$-algebras
	\begin{equation*}
		R \mapsto 
		\rH^1((G_{\ell}^{\rMT})_R, \Pi^{\dR}_R)
	\end{equation*}
	(which we denote simply by $\rH^1(G_{\ell}^{\rMT}, \Pi^{\dR})$) is not as well-behaved as $\rH^1(G_{\ell}, \Pi^{\et})$. As an example, we may consider the case $\Pi = \rH_1^{\mot}(X) = \QQ(1)^2$ and drop the power of 2. Restricting attention to $\QQ$-points, we then have
	\begin{align*}
		\rH^1(G_{\ell}^{\rMT}, \QQ(1))
		&\simeq 
		K_1^{(1)}(\QQ_{\ell})
		\\
		&\simeq
		\QQ_{\ell}^* \otimes \QQ
		\\
		& \simeq
		\Big(\ZZ \times \ZZ_{\ell} \times (\QQ_{\ell}^*)_{\m{tor}} \Big)
		\otimes \QQ
		\\
		& \simeq
		\QQ \times \QQ_{\ell}
	\end{align*}
	which is not the set of $\QQ$-points of a vector group over $\QQ$. Our goal here is to show in an ad-hoc way that the refined Selmer scheme of Definition \ref{20105B} is nevertheless rationally defined. 
\end{rmk}

Note that when $2 \not\in S$, the fact that $X(\bZ_2) = \emptyset$ implies that the refined étale Selmer scheme is empty, so a $\bQ$-structure is trivially given by the empty scheme. We thus assume $2 \in S$ in the following.
As in §\ref{sec:coordinates-on-selmer-scheme}, the motivic Selmer scheme is equipped with canonical functions $x_{\ell}$ and $y_{\ell}$ for $\ell \in S$. They are defined by
\[ c(\tau_{\ell}) = x_{\ell}(c) e_0 + y_{\ell}(c) e_1 \]
for graded Lie algebra cocycles $c\colon \fn(Z) \to \Lie(\Pi^{\dR})$, with $\{\tau_{\ell}\}_{\ell}$ being the canonical Lie algebra generators of $\fn(Z)$ in weight~$-1$ dual to the motivic logarithms $\{\log^{\fu}(\ell)\}_{\ell}$ in $A(Z)_{-1}$.

\begin{defn}
	\label{def:refined-motivic-selmer-scheme}
	Assume $2 \in S$. The \emph{refined motivic Selmer scheme} is the closed subscheme 
	\[ \Sel_{Z,\Pi}^{\min}(X) \subseteq \rH^1(G_Z,\Pi^{\dR}) \]
	defined by the equations
	\[ x_{\ell} y_{\ell}(x_{\ell} + y_{\ell}) = 0 \quad \text{for all $\ell \in S$}. \]
\end{defn}

The refined motivic Selmer scheme can be written as a union
\[ \Sel_{Z,\Pi}^{\min}(X) = \bigcup_{\Sigma} \Sel_{Z,\Pi}^{\Sigma}(X) \]
over the $3^{\# S}$ \emph{refinement conditions} $\Sigma = (\Sigma_{\ell})_{\ell \in S} \in \{0,1,\infty\}^S$, where $\Sel_{Z,\Pi}^{\Sigma}(X)$ is defined by the equations
\begin{align*}
	y_{\ell} &= 0 \quad \text{if $\Sigma_{\ell} = 0$},\\
	x_{\ell} &= 0 \quad \text{if $\Sigma_{\ell} = 1$},\\
	x_{\ell} + y_{\ell} &= 0 \quad \text{if $\Sigma_{\ell} = \infty$}
\end{align*}
for $\ell \in S$. 

\begin{rmk}
	The equations defining the refined Selmer scheme come from the fact that the map 
	\[ X(\bQ_{\ell}) \to \bQ^2,\quad \alpha \mapsto (v_{\ell}(\alpha),-v_{\ell}(1-\alpha)) \]
	lands inside the union of the three lines $\{y = 0\} \cup \{x = 0\} \cup \{x+y = 0\}$, see \cite[Lemma~2.9]{BBKLMQSX}.
\end{rmk}

\begin{prop}
	\label{20105F}
	The base change
	\[
	\Sel_{Z,\Pi}^{\min}(X)_{\QQ_p} \subseteq \rH^1(G_Z, \Pi^{\dR})_{\QQ_p}
	\]
	corresponds under the isomorphism \eqref{eq:motivic-etale-comparison-again} to the refined $p$-adic \'etale Selmer scheme
	\[
	\Sel_{Z,\Pi}^{\et,\min}(X)
	\subseteq
	\rH^1_Z(G_{\bQ}, \Pi^{\et}).
	\]
\end{prop}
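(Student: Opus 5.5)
By \Cref{rem:refinement-conditions}, for $\ell\notin S$ the local conditions defining $\Sel^{\et,\min}_{Z,\Pi}(X)$ are either vacuous ($\ell=p$, since the global Selmer scheme already has crystalline localization at $p$; and $\ell\neq 2,p$, since localization at $\ell\in Z\smallsetminus\{p\}$ is already trivial) or, for $\ell=2$, impose emptiness. Because we assume $2\in S$, the second case never arises. So the plan is to show that for each $\ell\in S$ the closed subscheme $\loc_\ell^{-1}\fk S_\ell$ of $\rH^1_Z(G_\bQ,\Pi^{\et})$ corresponds, under \eqref{eq:motivic-etale-comparison-again}, to the vanishing locus of $x_\ell y_\ell(x_\ell+y_\ell)$.

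\emph{Step 1: the abelian image.} The quotient $\Pi\twoheadrightarrow \rH_1^{\mot}(X)=\bQ(1)^2$ induces compatible maps of Selmer schemes. Write $\pi$ for the induced projection $\rH^1(G_\ell,\Pi^{\et})\to \rH^1(G_\ell,\bQ_p(1))^2$. By Kummer theory, $\rH^1(G_\ell,\bQ_p(1))=\widehat{\bQ_\ell^\times}\otimes\bQ_p\cong\bQ_p$ via $v_\ell$, since $\ell\neq p$ and the units of $\bZ_\ell$ become trivial after $p$-adic completion and tensoring with $\bQ_p$. On the motivic side, the composite of \eqref{eq:motivic-etale-comparison-again} with $\loc_\ell$ and $\pi$ sends a cocycle $c$ to $(x_\ell(c),y_\ell(c))$. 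To check this, I would use that $\loc_\ell$ on $\rH^1(G_\bQ,\bQ_p(1))=\bQ^\times\otimes\bQ_p$ (restricted to $Z$-units) is $u\mapsto v_\ell(u)$, that the coordinates $x_\ell,y_\ell$ are defined by pairing against $\tau_\ell$, dual to $\log^{\fu}(\ell)$, and then compare via \Cref{xy-on-kummer} applied to Kummer classes. Linearity and the fact that the abelian Selmer scheme is spanned by the classes of $\ell\in S$ give the identification.

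\emph{Step 2: the local image.} The expected obstacle is to show that $\fk S_\ell=\pi^{-1}(\{xy(x+y)=0\})$ as closed subschemes, i.e.\ that the local condition at $\ell$ only sees abelianization. I would rely on the known description of the local Kummer image for $\ell\neq p$ (Betts--Dogra; cf.\ \cite[Lemma~2.9]{BBKLMQSX}). Points of $X(\bQ_\ell)$ fall into the three ``cusp'' classes according to reduction to $0$, $1$ or $\infty$, plus integral points reducing into $X(\bF_\ell)$. The integral points have trivial Kummer class. For points near a cusp, the class depends only on the valuation $n$ and lies on the one-parameter curve traced by the tangential Kummer classes, which is a line in $\rH^1(G_\ell,\Pi^{\et})$ mapping isomorphically under $\pi$ onto the line $y=0$, $x=0$ or $x+y=0$ respectively. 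The Zariski closure of the infinitely many points $n\in\bZ$ on each such line is the whole line. One then has to verify that this union of three lines equals the full preimage under $\pi$ of the three abelian lines, or at least that $\loc_\ell^{-1}$ of the two agree. I would argue this by showing that $\rH^1(G_\ell,\Pi^{\et})\to\rH^1(G_\ell,\Pi^{\et,\ab})$ is an isomorphism for $\ell\neq p$. For mixed Tate $\Pi$ the relevant local groups are $\rH^1(G_\ell,\bQ_p(n))=0$ for $n\ge2$, and I would run an induction along the central series; the main difficulty is handling the $\rH^2$ terms in that induction.

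\emph{Step 3: assembly.} Pulling back along $\loc_\ell$ and intersecting over $\ell\in S$ gives the equations $x_\ell y_\ell(x_\ell+y_\ell)=0$. This is exactly \Cref{def:refined-motivic-selmer-scheme} after base change to $\bQ_p$.
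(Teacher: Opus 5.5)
Your argument is correct. It differs from the paper mainly in that the paper gives no argument of its own: it simply invokes the explicit description of the refined étale Selmer scheme in \cite[\S4.3--\S4.4]{BKL:chabauty_kim_sc}. You instead reconstruct that description. The local Selmer scheme at $\ell\in S$ equals its abelianization, the Kummer image is governed by $(v_\ell(z),-v_\ell(1-z))$, its Zariski closure is three lines, and these pull back to $x_\ell y_\ell(x_\ell+y_\ell)=0$. The citation buys brevity; your route makes the statement self-contained, using only abelian Kummer theory and local cohomology of Tate twists.

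The ``main difficulty'' you flag at the end of Step~2 is not a real obstacle. For $\ell\neq p$ and $n\geq 2$, local Tate duality gives $\rH^2(G_\ell,\bQ_p(n))\cong\rH^0(G_\ell,\bQ_p(1-n))^\vee=0$, because the cyclotomic character has infinite order on $G_\ell$. Since the Euler characteristic vanishes for $\ell\neq p$, this gives $\rH^1(G_\ell,\bQ_p(n))=0$ as well. Now $\Pi\twoheadrightarrow\rH_1^{\mot}(X)$ forces $\Pi^{\ab}=\bQ(1)^2$, and the higher graded pieces of the descending central series of $\Pi$ are quotients of those of $\pi_1^{\mot}(X)$, hence of the form $\bQ_p(n)^{\oplus m}$ with $n\geq 2$. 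The exact sequence for central extensions (fibres are $\rH^1(G_\ell,C)$-orbits, obstructions lie in $\rH^2(G_\ell,C)$) then makes $\pi$ bijective on $R$-points for every $\bQ_p$-algebra $R$, so $\rH^1(G_\ell,\Pi^{\et})\cong\bA^2_{\bQ_p}$.

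With this in hand, your tangential-basepoint discussion is unnecessary. The Kummer image consists of the points $(n,0)$, $(0,-n)$, $(-n,n)$ for $n\geq1$, realised by $z=\ell^n$, $1-\ell^n$, $\ell^{-n}$, possibly together with $(0,0)$. It is infinite on each of the three lines, so $\fk S_\ell=\{xy(x+y)=0\}$.

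In Step~1, make the check factorwise on $\rH^1(G_Z,\bQ(1))=E_1(Z)$, using the basis $\log^{\fu}(\ell)$ for $\ell\in S$, rather than through \Cref{xy-on-kummer}. The abelianized Kummer classes of points of $X(Z)$ need not span: for $S=\{2,11\}$, every $z\in X(Z)$ has $v_{11}(z)=v_{11}(1-z)=0$. Your ``linearity'' sentence, applied to the Kummer classes of the $\ell\in S$, is exactly this factorwise check, and it works.
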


\begin{proof}
	This follows from the explicit description of the refined $p$-adic étale Selmer scheme given in \cite[§4.3--§4.4]{BKL:chabauty_kim_sc}.
\end{proof}

Using the decomposition
\begin{equation}
	\label{eq:etale-refined-Selmer-scheme-decomposition}
	\Sel_{Z,\Pi}^{\et,\min}(X) = \bigcup_{\Sigma} \Sel_{Z,\Pi}^{\et,\Sigma}(X),
\end{equation}
we also define the $\Sigma$-refined Chabauty--Kim ideal $\cI_{Z,\Pi}^{\Sigma}$ and $\Sigma$-refined Chabauty--Kim locus $X(\bZ_p)_{Z,\Pi}^{\Sigma}$. As a set, $X(\bZ_p)_{Z,\Pi}^{\Sigma}$ consists of all $x \in X(\bZ_p)$ for which $\kappa_p^{\dR}(x)$ lies in the scheme-theoretic image of $\Sel_{Z,\Pi}^{\et,\Sigma}(X)$ under $\logBK \circ \loc_p$.
From \eqref{eq:etale-refined-Selmer-scheme-decomposition} we get
\[ X(\bZ_p)_{Z,\Pi}^{\min}=\bigcup_{\Sigma} X(\bZ_p)_{Z,\Pi}^{\Sigma}, \]
corresponding to the decomposition $X(Z) = \bigcup_{\Sigma} X(Z)_{\Sigma}$ where
\[ X(Z)_{\Sigma} = \left\{x \in X(Z) : \red_{\ell}(x) \in (X \cup \{\Sigma_{\ell}\})(\bF_{\ell}) \text{ for $\ell \in S$} \right\}. \]
Here, the mod-$\ell$ reduction map is
\[ \red_{\ell}\colon X(\bQ_{\ell}) \subseteq \bP^1(\bQ_{\ell}) = \bP^1(\bZ_{\ell}) \twoheadrightarrow \bP^1(\bF_{\ell}). \]
It is natural to formulate a $\Sigma$-wise version of Kim's Conjecture:

\begin{conj}[$\Sigma$-refined Kim's Conjecture]
	\label{conj:Sigma-refined-Kims-conjecture}
	Let $\Sigma \in \{0,1,\infty\}^S$ be a refinement condition. Then $X(\bZ_p)_{Z,\Pi}^{\Sigma} = X(Z)_{\Sigma}$ for sufficiently large fundamental group quotients~$\Pi$.
\end{conj}

As in the unrefined case, we interpret this to include the assertion that the Chabauty--Kim locus is reduced.

As noted in \cite[Lemma~4.12]{BKL:chabauty_kim_sc}, when $\Pi = \Pi_n$ is the full level~$n$ quotient of the fundamental group for some $1 \leq n \leq \infty$, the natural $S_3$-action on $X$ generated by $x \mapsto 1/x$ and $x \mapsto 1-x$ preserves the refined Chabauty--Kim locus $X(\bZ_p)_{Z,n}^{\min}$ and permutes the $\Sigma$-refined subsets according to the rule
\begin{equation}
	\label{eq:S3-action-on-refined-loci}
	\sigma(X(\bZ_p)_{Z,n}^{\Sigma}) = X(\bZ_p)_{Z,n}^{\sigma(\Sigma)}
\end{equation}
for all $\sigma \in S_3 = S_{\{0,1,\infty\}}$ and $\Sigma \in \{0,1,\infty\}^S$. As a consequence, in order to compute $X(\bZ_p)_{Z,n}^{\min}$, it suffices to compute the subsets $X(\bZ_p)_{Z,n}^{\Sigma}$ with $\Sigma$ running through a set of representatives of the orbits for $S_3$ acting on $\{0,1,\infty\}^S$, and taking the $S_3$-orbit closure.

\section{The motivic Galois action}
\label{sec:galois-action}

The main discovery of the present paper is an explicit description of the action of the unipotent motivic Galois group $U_Z$ (§\ref{sec:mixed-tate-galois-group}) on the de Rham fundamental group $\pi_1^{\dR}(X)$ of $X = \bP^1 \smallsetminus \{0,1,\infty\}$. We give a formula for this action based on the Kummer cocycle of the tangential basepoint $1 = \vec{-1}_1$, and then explain how this can be made explicit using the calculus of motivic multiple zeta values. 

\subsection{The Galois action on the fundamental group}
\label{sec:galois-action-fundamental-group}

Let $Z = \Spec(\bZ) \smallsetminus S$ be a dense open subscheme of $\Spec(\bZ)$. Recall that we write $\pi_1^{\dR}(X)$ for the fundamental group based at the tangential basepoint $0 = \vec{1}_0$, and that this group is free pro-unipotent on two canonical generators $\gamma_0$ and $\gamma_1$ (§\ref{sec:structure-of-fundamental-group}). It carries an action of the mixed Tate motivic Galois group $G_Z = U_Z \rtimes \bG_m$ as it arises as the de Rham realization of the motivic fundamental group $\pi_1^{\mot}(X)$ in $\MT(Z,\bQ)$ (§\ref{sec:motivic-fundamental-group}). 
The action of $\bG_m$ is simply given through the grading by half-weight, with $\lambda \in \bG_m$ acting as $\lambda^{-i}$ in half-weight~$i$. 
Since $\gamma_0$ and $\gamma_1$ come from $\pi_1^{\dR}(\bG_m)$, whose Lie algebra $\rH_1^{\dR}(\bG_m)$ is pure of weight $-2$, the $\bG_m$-action on the generators is given by 
\[ \lambda.\gamma_i = \gamma_i^{\lambda} = \exp(\lambda \log(\gamma_i)). \]
We therefore focus on the action of the unipotent part $U_Z$. It turns out that it is completely determined by the action on one particular element: the canonical de Rham path $\gamma_{0,1}^{\dR} \in \pi_1^{\dR}(X;0,1)(\bQ)$ from the tangential basepoint at~$0$ to the one at~$1$ (§\ref{sec:canonical-paths}). This action is encoded in the $\bG_m$-equivariant Kummer cocycle
\begin{equation}
	\label{eq:kummer-cocycle}
	\begin{split}
		\kappa_1\colon U_Z &\to \pi_1^{\dR}(X), \\
		\kappa_1(u) &= (\gamma_{0,1}^{\dR})^{-1} u(\gamma_{0,1}^{\dR}),
	\end{split}
\end{equation}
which represents the isomorphism class of the path torsor $\pi_1^{\dR}(X;0,1)$ under the identification~\eqref{eq:H1-cocycles}.

\begin{thm}[The motivic Galois action]
	\label{galois-action-group-version}
	The action of $U_Z$ on $\pi_1^{\dR}(X)$ satisfies and is completely determined by the following:
	\begin{enumerate}[label=(\alph*)]
		\item \label{item:group-action-factors}
		the action factors through $U_Z \twoheadrightarrow U_{\bZ}$;
		\item \label{item:action-on-gamma0}
		$U_Z$ acts trivially on $\gamma_0$, i.e., $u(\gamma_0) = \gamma_0$ for all $u \in U_Z$;
		\item \label{item:action-on-gamma1}
		the action of $u \in U_Z$ on $\gamma_1$ is given by
		\[ u(\gamma_1) = \kappa_1(u)^{-1} \gamma_1 \kappa_1(u). \]
	\end{enumerate}
\end{thm}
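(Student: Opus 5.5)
The proof proceeds by establishing (b) and (c) directly from the definitions of $\gamma_0$, $\gamma_1$ and $\kappa_1$, and then deducing determinacy and (a) from the freeness of $\pi_1^{\dR}(X)$ on $\gamma_0,\gamma_1$ (§\ref{sec:structure-of-fundamental-group}).

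\emph{Determinacy.} Any $u \in U_Z(R)$ acts on $\pi_1^{\dR}(X)_R$ by a group automorphism. This automorphism is fixed by its values on the topological generators $\gamma_0,\gamma_1$. So once (b) and (c) hold, the whole action is recovered from the cocycle $\kappa_1$.

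\emph{Part (b).} The element $\gamma_0$ is the image of the canonical generator of $\pi_1^{\dR}(T_0^\times,1)$ under the local monodromy map $\mu_0$ of \eqref{eq:monodromy-homomorphism}. Deligne's tangential basepoint construction is motivic, so $\mu_0$ is the de Rham realization of a morphism in $\MT(Z,\bQ)$ and is $G_Z$-equivariant. It therefore suffices to check that $U_Z$ acts trivially on $\pi_1^{\dR}(\bG_m,1)=\bG_a$, whose Lie algebra is $\rH_1^{\dR}(\bG_m)=\bQ(1)$. This motive is pure, and the unipotent radical $U_Z$ acts trivially on pure (semisimple) Tate motives.

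\emph{Part (c).} By the same argument at the tangential basepoint $(-\vec 1)_1$, the loop $\gamma_1'\in\pi_1^{\dR}(X,1)$ is fixed by $U_Z$. Apply $u$ to \eqref{eq:gamma1-def}; the action is compatible with path composition and inversion in the groupoid. Substituting $u(\gamma_{0,1}^{\dR})=\gamma_{0,1}^{\dR}\kappa_1(u)$ from \eqref{eq:kummer-cocycle} gives
\[ u(\gamma_1)=\kappa_1(u)^{-1}(\gamma_{0,1}^{\dR})^{-1}\gamma_1'\gamma_{0,1}^{\dR}\kappa_1(u)=\kappa_1(u)^{-1}\gamma_1\kappa_1(u). \]

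\emph{Part (a).} The motive $\pi_1^{\mot}(X)$ with base point $\vec 1_0$, which has good reduction everywhere, lies in the full subcategory $\MT(\bZ,\bQ)\subseteq\MT(Z,\bQ)$. The induced map $G_Z\to G_{\bZ}$ restricts to $U_Z\twoheadrightarrow U_{\bZ}$, and the action factors through it. Alternatively, one can argue from (b) and (c): by (c) it is enough to show that $\kappa_1$ factors through $U_{\bZ}$, which holds because the path torsor $\pi_1^{\mot}(X;\vec 1_0,(-\vec 1)_1)$ has $\bZ$-integral basepoints and so is defined over $\Spec\bZ$ \cite{DelGon05}.

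\textbf{Main obstacle.} The delicate step is justifying that the local monodromy maps at $0$ and at $1$ are motivic (Galois-equivariant) and that $\rH_1^{\dR}(\bG_m)$ carries the trivial $U_Z$-action. For this I would cite Deligne's and Deligne--Goncharov's construction of tangential basepoints as morphisms of motivic fundamental groupoids. The rest is formal.
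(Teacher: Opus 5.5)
Your proposal is correct and follows the paper's proof essentially step for step. Part (a) comes from $\pi_1^{\mot}(X)$ lying in $\MT(\Spec(\bZ),\bQ)$, part (b) from the motivic local monodromy map together with the trivial $U_Z$-action on $\pi_1^{\mot}(\bG_m,1)$, and part (c) from applying $u$ to $\gamma_1 = (\gamma_{0,1}^{\dR})^{-1}\gamma_1'\gamma_{0,1}^{\dR}$ and substituting $u(\gamma_{0,1}^{\dR}) = \gamma_{0,1}^{\dR}\kappa_1(u)$. Your determinacy remark, that an automorphism of the free pro-unipotent group is fixed by its values on $\gamma_0,\gamma_1$, simply makes explicit what the paper leaves implicit.
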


Here, $U_{\bZ}$ is shorthand for $U_{\Spec(\bZ)}$. 

\begin{proof}
	The action factors through $U_Z \twoheadrightarrow U_{\bZ}$ since $\pi_1^{\mot}(X)$ is contained in the full subcategory $\MT(\Spec(\bZ),\bQ) \subseteq \MT(Z,\bQ)$, using that the tangential basepoint $\vec{1}_0$ is $\bZ$-integral. Recall that $\gamma_0 \in \pi_1^{\dR}(X)(\bQ)$ is defined as the image of the canonical generator of $\pi_1^{\dR}(T_0^{\times},1) = \pi_1^{\dR}(\bG_m, 1)$ under the local monodromy homomorphism~\eqref{eq:monodromy-homomorphism}. Since this map is motivic (i.e., arising from a map of motivic fundamental groups $\pi_1^{\mot}(T_0^{\times},1) \to \pi_1^{\mot}(X)$ in $\MT(Z,\bQ)$ under de Rham realization) \cite[§5.4]{DelGon05}, and $U_Z$ acts trivially on the 1-dimensional vector group $\pi_1^{\mot}(\bG_m,1)$, the action of $U_Z$ on $\gamma_0$ is trivial as well. Likewise, $U_Z$ acts trivially on $\gamma_1' \in \pi_1^{\dR}(X,1)(\bQ)$, the inverse of the image of the canonical generator of $\pi_1^{\dR}(T_1^*, 1)$ under the local monodromy map at~$1$. By definition, $\gamma_1 \in \pi_1^{\dR}(X)(\bQ)$ is the loop obtained by transporting $\gamma_1'$ along the canonical de Rham path $\delta \coloneqq \gamma_{0,1}^{\dR}$, see Eq.~\eqref{eq:gamma1-def}, so for $u \in U_Z$ we have
	\begin{align*}
		u(\gamma_1) &= u(\delta)^{-1} \gamma_1' u(\delta) \\
		&= u(\delta)^{-1} \delta \gamma_1 \delta^{-1} u(\delta) \\
		&= (\delta^{-1} u(\delta))^{-1} \gamma_1 (\delta^{-1} u(\delta)) \\
		&= \kappa_1(u)^{-1} \gamma_1 \kappa_1(u). \qedhere
	\end{align*}
\end{proof}

\begin{rmk}
	\label{rmk: Galois action references}
	The statements of \Cref{galois-action-group-version} appear in a similar form in the proof of \cite[Proposition~5.9]{DelGon05}.
\end{rmk}

\subsection{Motivic Galois actions of Lie algebras}
\label{sec:galois-action-lie-algebras}

For explicit calculations it is convenient to work with the Lie algebras and the cocommutative Hopf algebras associated to the pro-unipotent groups $U_Z$ and $\pi_1^{\dR}(X)$. Denote the Lie algebras of $U_Z$ and $\pi_1^{\dR}(X)$ by $\fn(Z)$ and $\fn(X)$, respectively. The action of $U_Z$ on $\pi_1^{\dR}(X)$ induces a homomorphism of pro-algebraic groups
\[ U_Z \to \Aut(\pi_1^{\dR}(X)) \to \Aut(\fn(X)), \]
which induces a homomorphism of Lie algebras
\begin{equation}
	\label{eq:lie-algebra-galois-action}
	\phi\colon \fn(Z) \to \Der(\fn(X)),
\end{equation}
where $\Der(\fn(X))$ is the Lie algebra of derivations on $\fn(X)$. We write $\sigma.a \coloneqq \phi(\sigma)(a)$ and say that $\fn(Z)$ acts on $\fn(X)$ by derivations, i.e.,
\[ \sigma.[a,b] = [\sigma.a, b] + [a, \sigma.b] \]
for $\sigma \in \fn(Z)$ and $a,b \in \fn(X)$. To describe this action, we need the Lie algebra version of the Kummer cocycle~\eqref{eq:kummer-cocycle}. As explained in \cite[§5]{Luedtke:motivic_selmer_scheme}, for any cocycle $c\colon U_Z \to \pi_1^{\dR}(X)$ we can look at the induced map on tangent spaces at the identity to obtain a \emph{Lie algebra cocycle}
\[ c\colon \fn(Z) \to \fn(X), \]
denoted by the same letter. The cocycle property is
\begin{equation}
	\label{eq:lie-algebra-cocycle}
	c([\sigma,\tau]) = [c(\sigma), c(\tau)] + \sigma.c(\tau) - \tau.c(\sigma)
\end{equation}
for $\sigma, \tau \in \fn(Z)$.
In particular, we can apply this construction to the Kummer cocycle~\eqref{eq:kummer-cocycle} to obtain a Lie algebra cocycle
\begin{equation}
	\label{eq:lie-algebra-kummer-cocycle}
	\kappa_1\colon \fn(Z) \to \fn(X).
\end{equation}
As $\fn(X)$ is free pro-nilpotent on the two canonical generators $e_0 \coloneqq \log(\gamma_0)$ and $e_1 \coloneqq \log(\gamma_1)$, the Lie action~\eqref{eq:lie-algebra-galois-action} is completely determined by the action of $\fn(Z)$ on these two elements.

\begin{thm}[The motivic Galois action, Lie algebra version]
	\label{galois-action-lie-algebras}
	The action of $\fn(Z)$ on $\fn(X)$ satisfies and is completely determined by the following:
	\begin{enumerate}[label=(\alph*)]
		\item \label{item:action factors}
		the action factors through $\nf(Z) \twoheadrightarrow \nf(\mathbb{Z})$;
		\item \label{item:action on e0 trivial}
		$\nf(Z)$ acts trivially on $e_0$; that is, $\sigma . e_0 = 0$ for all $\sigma \in \nf(Z)$;
		\item \label{item:action on e1}
		the action of $\sigma \in \nf(Z)$ on $e_1$ is given by
		\[ \sigma . e_1 = [e_1, \kappa_1(\sigma)]. \]
	\end{enumerate}
\end{thm}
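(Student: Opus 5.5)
The plan is to differentiate the group-level statement, Theorem~\ref{galois-action-group-version}, at the identity of $U_Z$. The group action is a homomorphism of pro-algebraic groups $U_Z \to \Aut(\pi_1^{\dR}(X))$. Its differential is $\phi$ in~\eqref{eq:lie-algebra-galois-action}, once we identify $\Aut(\pi_1^{\dR}(X)) \cong \Aut(\fn(X))$ via $\log$/$\exp$. For $\sigma \in \fn(Z)$ we have $\sigma.a = \frac{d}{d\epsilon}\big|_{\epsilon=0}\, \exp(\epsilon\sigma)(a)$. To make this rigorous we work with points valued in the dual numbers $R = \bQ[\epsilon]/(\epsilon^2)$: the element $u_\epsilon \coloneqq \exp(\epsilon\sigma) = 1 + \epsilon\sigma \in U_Z(R)$ acts on $\fn(X)\hat\otimes R$ as $a \mapsto a + \epsilon\, \sigma.a$.

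\textbf{Parts (a) and (b).} For (a), the surjection $U_Z \twoheadrightarrow U_{\bZ}$ induces the surjection $\fn(Z) \twoheadrightarrow \fn(\bZ)$ on Lie algebras. Factorization of the group homomorphism therefore gives factorization of its differential. For (b), $u_\epsilon(\gamma_0) = \gamma_0$ by Theorem~\ref{galois-action-group-version}\ref{item:action-on-gamma0}. Applying $\log$, which commutes with group automorphisms, gives $u_\epsilon(e_0) = e_0$, hence $\sigma.e_0 = 0$.

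\textbf{Part (c).} The Lie algebra Kummer cocycle is the tangent map of~\eqref{eq:kummer-cocycle} at the identity. Since $\kappa_1(1) = 1$, this means $\kappa_1(u_\epsilon) = 1 + \epsilon\,\kappa_1(\sigma)$ in $\pi_1^{\dR}(X)(R) \subseteq R\llangle e_0,e_1\rrangle$, with $\kappa_1(\sigma) \in \fn(X)$ the Lie cocycle~\eqref{eq:lie-algebra-kummer-cocycle}. I will check that this matches the convention in \cite[§5]{Luedtke:motivic_selmer_scheme}. Theorem~\ref{galois-action-group-version}\ref{item:action-on-gamma1} then gives
\[ u_\epsilon(\gamma_1) = (1 - \epsilon\kappa_1(\sigma))\,\gamma_1\,(1+\epsilon\kappa_1(\sigma)) = \gamma_1 + \epsilon\,[\gamma_1, \kappa_1(\sigma)]. \]
Conjugation commutes with $\log$, so
\[ u_\epsilon(e_1) = (1-\epsilon\kappa_1(\sigma))\, e_1\, (1+\epsilon\kappa_1(\sigma)) = e_1 + \epsilon\,[e_1,\kappa_1(\sigma)], \]
computed in the completed Hopf algebra $R\llangle e_0,e_1\rrangle$ from §\ref{sec:hopf-algebra}. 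This yields $\sigma.e_1 = [e_1,\kappa_1(\sigma)]$.

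\textbf{Determination.} Since $U_Z$ is pro-unipotent, the group action is recovered from the Lie action by exponentiation. Moreover, a derivation of the free pro-nilpotent Lie algebra $\fn(X)$ is continuous and hence determined by its values on the topological generators $e_0, e_1$. Together these show that (a)--(c) determine the action.

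The only delicate point I anticipate is bookkeeping: checking that the identification of $\fn(Z)$ with tangent vectors, and the Lie cocycle $\kappa_1$, use the same sign and side conventions as the group cocycle $c(u) = \gamma^{-1}u(\gamma)$. Getting this wrong would turn the answer into $[\kappa_1(\sigma),e_1]$. To guard against this, I will verify the expansion $\kappa_1(1+\epsilon\sigma) = 1+\epsilon\,\kappa_1(\sigma)$ directly against the definition of the induced Lie cocycle.
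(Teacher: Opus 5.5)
Your proposal is correct and follows essentially the same route as the paper: both differentiate the group-level \Cref{galois-action-group-version} at the identity. The paper takes tangent maps of the composites $U_Z \to \Aut(\fn(X)) \xrightarrow{\ev_{e_i}} \fn(X)$, using $u(e_1) = \Ad(\kappa_1(u)^{-1})(e_1)$ for (c). You carry out the same computation with the dual-number point $1+\epsilon\sigma$ and correctly obtain $\ad(-\kappa_1(\sigma))(e_1) = [e_1,\kappa_1(\sigma)]$, with the right sign.
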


Here, $\fn(\bZ)$ is shorthand for $\fn(\Spec(\bZ))$. 

\begin{proof}
	Part~\ref{item:action factors} follows immediately from the corresponding statement in \Cref{galois-action-group-version}. Since $U_Z$ acts trivially on~$\gamma_0$, it also acts trivially on $e_0 = \log(\gamma_0)$, in other words, the composition
	\[ U_Z \to \Aut(\pi_1^{\dR}(X)) \to \Aut(\fn(X)) \xrightarrow{\ev_{e_0}} \fn(X) \]
	is constant with value $e_0$. Here, $\ev_{e_0}$ is the map ``evaluation at~$e_0$''. Taking the induced map on tangent spaces shows that
	\[ \fn(Z) \xrightarrow{\phi} \Der(\fn(X)) \xrightarrow{\ev_{e_0}} \fn(X) \]
	is constant zero, proving part~\ref{item:action on e0 trivial}.
	\Cref{galois-action-group-version}\ref{item:action-on-gamma1} implies that the action of $U_Z$ on $e_1 \in \fn(X)$ is given by
	\[ u(e_1) = \Ad(\kappa_1(u)^{-1})(e_1), \]
	where $\Ad$ denotes the adjoint action of $\pi_1^{\dR}(X)$ on its Lie algebra. In other words, the orbit map $U_Z \to \fn(X)$, $u \mapsto u(e_1)$ agrees with the composition
	\[ U_Z \xrightarrow{\kappa_1} \pi_1^{\dR}(X) \xrightarrow{(-)^{-1}} \pi_1^{\dR}(X) \xrightarrow{\Ad} \Aut(\fn(X)) \xrightarrow{\ev_{e_1}} \fn(X). \]
	Taking the induced map on tangent spaces shows that the orbit map $\fn(Z) \to \fn(X)$, $\sigma \mapsto \sigma.e_1$ equals
	\[ \fn(Z) \xrightarrow{\kappa_1} \fn(X) \xrightarrow{-1} \fn(X) \xrightarrow{\ad} \Der(\fn(X)) \xrightarrow{\ev_{e_1}} \fn(X). \]
	This means we have
	\[ \sigma.e_1 = \ad(-\kappa_1(\sigma))(e_1) = [-\kappa_1(\sigma), e_1] = [e_1, \kappa_1(\sigma)]. \qedhere \]
\end{proof}

\subsection{Hopf algebra actions and cocycles}
\label{sec:hopf-algebra-action}

Recall that to a pro-unipotent group~$U$ we can associate a cocommutative pro-Hopf algebra $\cH(U) = \cO(U)^{\vee}$ (§\ref{sec:hopf-algebra}). We denote the Hopf algebras associated to $U_Z$ and $\pi_1^{\dR}(X)$ by
\[ \cH(Z) \coloneqq \cO(U_Z)^{\vee}, \qquad \cH(X) \coloneqq \cO(\pi_1^{\dR}(X))^{\vee}. \]
There is a canonical isomorphism $\cH(X) = \bQ\llangle e_0,e_1 \rrangle$ corresponding to the canonical choice of generators of the Lie algebra~$\fn(X)$ (§\ref{sec:structure-of-fundamental-group}), and a non-canonical isomorphism $\cH(Z) = \bQ\llangle\Sigma \rrangle$ corresponding to a choice of generators $\Sigma$ of $\fn(Z)$ (§\ref{sec:mixed-tate-galois-group}). (In the latter case, we also allow infinite sums of bounded degree like $\sum_{\sigma \in \Sigma} \sigma$ as elements of $\bQ\llangle \Sigma\rrangle$, as the pro-object is constructed as a formal limit over finite subsets of~$\Sigma$.)

The action map $U_Z \times \pi_1^{\dR}(X) \to \pi_1^{\dR}(X)$ is given by an algebra homomorphism $\cO(\pi_1^{\dR}(X)) \to \cO(U_Z) \otimes \cO(\pi_1^{\dR}(X))$, which dualizes to a coalgebra homomorphism 
\[ \rho\colon \cH(Z) \, \hat\otimes \, \cH(X) \to \cH(X). \]
We write this action as $\sigma.a \coloneqq \rho(\sigma \otimes a)$ for $\sigma \in \cH(Z)$ and $a \in \cH(X)$. The associativity of the group action means that $1.a = a$ and $(\sigma \tau).a = \sigma.(\tau.a)$ holds for all $\sigma,\tau \in \cH(Z)$ and $a \in \cH(X)$, and the fact that $U_Z$ acts by group automorphisms is expressed by the formula
\begin{equation}
	\label{eq:hopf-action}
	\sigma.(ab) = \sum_{(\sigma)} (\sigma_{(1)}.a)(\sigma_{(2)}.b)
\end{equation}
for $\sigma \in \cH(U_Z)$ and $a,b \in \cH(X)$. Here, we are using Sweedler notation, so the sum on the right hand side of~\eqref{eq:hopf-action} means $\sum_i (\sigma_{(1)}^i.a)(\sigma_{(2)}^i.b)$ when the coproduct of $\sigma$ is written as $\Delta \sigma = \sum_i \sigma_{(1)}^i \otimes \sigma_{(2)}^i$.
Restricting the Hopf algebra action to grouplike or Lie-like elements recovers the group action and the Lie algebra action from the previous two subsections. 

A cocycle $c\colon U_Z \to \pi_1^{\dR}(X)$ corresponds to a coalgebra homomorphism
\[ c\colon \cH(Z) \to \cH(X), \]
which we still denote by the same letter, and the cocycle property translates into
\begin{equation}
	\label{eq:hopf-cocycle}
	c(\sigma \tau) = \sum_{(\sigma)} c(\sigma_{(1)}) \cdot \sigma_{(2)}.c(\tau),
\end{equation}
for $\sigma, \tau \in \cH(Z)$, again employing Sweedler notation. Most important for us is the case where $\sigma \in \fn(Z)$ is Lie-like, in which case the cocycle formula simplifies to
\begin{equation}
	\label{eq:cocycle-on-lie-element}
	c(\sigma \tau) = c(\sigma)c(\tau) + \sigma.c(\tau).
\end{equation}

We can apply this construction to extend the Kummer cocycle~\eqref{eq:kummer-cocycle} to a cocycle of cocommutative Hopf algebras
\begin{equation}
	\label{eq:kummer-cocycle-on-hopf-algebras}
	\kappa_1\colon \cH(Z) \to \bQ\llangle e_0,e_1\rrangle.
\end{equation}
It can be expressed in terms of motivic multiple zeta values (§\ref{sec:motivic-polylogarithms}) as follows. 

\begin{lemma}
	\label{lem:kummer-cocycle-via-mzv}
	For $\sigma \in \cH(Z)$ we have
	\begin{equation}
		\label{eq:kummer-cocycle-via-mzv}
		\kappa_1(\sigma) = \sum_w \langle \sigma, \zeta^{\fu}(w) \rangle w,
	\end{equation}
	where the sum is over all words $w$ in the letters $\{e_0,e_1\}$, and $\langle\cdot,\cdot\rangle$ is the natural pairing between $\cH(Z) = \cO(U_Z)^\vee$ and $A(Z) = \cO(U_Z)$.
\end{lemma}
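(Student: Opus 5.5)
The plan is to prove \eqref{eq:kummer-cocycle-via-mzv} by comparing the $w$-coefficients of both sides, using the dual description of the Hopf algebra cocycle. The extended cocycle $\kappa_1\colon \cH(Z) \to \cH(X) = \bQ\llangle e_0,e_1\rrangle$ is, by construction, the transpose of the pullback map $\kappa_1^\sharp\colon \cO(\pi_1^{\dR}(X)) \to \cO(U_Z) = A(Z)$. Concretely, for $\sigma \in \cH(Z) = A(Z)^\vee$ and $f \in \cO(\pi_1^{\dR}(X))$ we have
\[ \langle \kappa_1(\sigma), f \rangle = \langle \sigma, \kappa_1^\sharp f\rangle . \]
I would first record this identity explicitly. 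It is the definition of the dual map: a morphism of schemes induces a coalgebra map on the duals of coordinate rings, and this is exactly how $\kappa_1$ was extended to $\cH(Z)$.

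Next I would expand $\kappa_1(\sigma) = \sum_w b_w w$ with $b_w \in \bQ$. The coefficient is $b_w = \langle \kappa_1(\sigma), f_w\rangle$, where $f_w$ is the coefficient extraction functional of \Cref{def:coeff-extraction-functions}. By \Cref{paths-as-series}, applied with $x=y=0$, $f_w$ coincides with the iterated integral function $\Li_w \in \cO(\pi_1^{\dR}(X))$. Combining this with the transpose identity gives
\[ b_w = \langle \sigma, \kappa_1^\sharp(\Li_w)\rangle . \]

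Finally I would identify $\kappa_1^\sharp(\Li_w)$ with $\Li_w^{\fu}(1) = \zeta^{\fu}(w)$. This is the content of \Cref{motivic-integrals-via-kummer-cocycles} with $\alpha = 1 = (-\vec 1)_1$ and $\Pi = \pi_1^{\mot}(X)$. That lemma is stated for all words $w$, not only those beginning with $e_0$ and ending with $e_1$; this matches the convention fixing $\zeta^{\fu}(w)$ for arbitrary words. One can also check it directly by the same computation as in that lemma: path composition plus \Cref{integral-over-canonical-de-rham-path}. Substituting gives $b_w = \langle\sigma,\zeta^{\fu}(w)\rangle$, which is \eqref{eq:kummer-cocycle-via-mzv}.

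The only real subtlety is the first step: making sure the Hopf-algebra cocycle $\kappa_1$ on all of $\cH(Z)$, not just on grouplike points, is literally the transpose of $\kappa_1^\sharp$. It should also be checked that the pairing conventions (the topological duals in $\opnm{ProFinVect}$) are compatible, so that the possibly infinite sum over $w$ makes sense. To handle this I would observe that on $R$-points, $u \in U_Z(R)$ corresponds to the algebra map $u^\sharp\colon A(Z) \to R$. The identity $f_w(\kappa_1(u)) = u^\sharp(\kappa_1^\sharp f_w)$ then holds tautologically. Because grouplike points over all $R$ determine the linear map on $\cH(Z)$ (for example, taking $R = A(Z)$ and the universal point, whose dual spans), the identity extends by linearity and continuity to all of $\cH(Z)$.
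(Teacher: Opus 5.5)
Your proposal is correct and follows the paper's proof essentially verbatim: the $w$-coefficient of $\kappa_1(\sigma)$ is $\langle \sigma, \kappa_1^\sharp(f_w)\rangle$ because the Hopf-algebra cocycle is the linear dual of $\kappa_1^\sharp$; then $f_w = \Li_w$ by \Cref{paths-as-series}, and $\kappa_1^\sharp(\Li_w) = \zeta^{\fu}(w)$ by \Cref{motivic-integrals-via-kummer-cocycles} with $\alpha = (-\vec{1})_1$. Your final paragraph checking the transpose identity via grouplike points is harmless but unnecessary, since the paper defines the extension to $\cH(Z)$ as the dual of $\kappa_1^\sharp$ from the outset.
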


\begin{proof}
	As $\kappa_1\colon \cH(Z) \to \bQ\llangle e_0,e_1\rrangle$ is defined as the linear dual of $\kappa_1^{\sharp}$, the $w$-coefficient of $\kappa_1(\sigma)$ is equal to
	\[ f_w(\kappa_1(\sigma)) = \langle \sigma, \kappa_1^{\sharp}(f_w) \rangle. \]	
	But the function $f_w \in \cO(\pi_1^{\dR}(X))$ is the multiple polylogarithm function $\Li_w$ (\Cref{paths-as-series}), and we have $\kappa_1^{\sharp} \Li_w = \zeta^{\fu}(w)$ by \Cref{motivic-integrals-via-kummer-cocycles} applied to the tangent vector $\alpha = \vec{-1}_1$.
\end{proof}

Since the Kummer cocycle $\kappa_1$ is $\bG_m$-equivariant, its Hopf algebra extension~\eqref{eq:kummer-cocycle-on-hopf-algebras} respects the grading by half-weight, where $e_0$ and $e_1$ are placed in half-weight~$-1$. This means that whenever $\sigma \in \cH(Z)$ is pure of half-weight~$-n$, the sum on the right-hand side of~\eqref{eq:kummer-cocycle-via-mzv} involves only words $w$ of length~$n$.

\subsection{Example of a nontrivial Galois action}
\label{sec:galois-action-example}

Take $\sigma_3$ to be the generator of the one-dimensional space $\fn(\bZ)_{-3}$ satisfying $\langle \sigma_3, \zeta^{\fu}(3) \rangle = 1$ as in §\ref{sec:normalized-generators}. We determine the action of $\sigma_3$ on $e_1 \in \nf(X)$. Since the action is graded, $\sigma_3.e_1$ lives in half-weight~$-4$. A basis of $\nf(X)_{-4}$ is given by the three Lyndon words in $\{e_0, e_1\}$ of length~$4$ with their standard bracketing (§\ref{sec:lyndon-basis}), hence we have
\begin{align*}
	\sigma_3.e_1 &= \alpha \lambda(e_0e_0e_0e_1) + \beta \lambda(e_0e_0e_1e_1) + \gamma \lambda(e_0e_1e_1e_1)\\
	&= \alpha [e_0,[e_0,[e_0,e_1]]] + \beta [e_0,[[e_0,e_1],e_1]] + \gamma [[[e_0,e_1],e_1],e_1],
\end{align*}
for certain $\alpha, \beta, \gamma \in \bQ$. These coefficients of the Lyndon basis elements coincide with the coefficients of the Lyndon words after expanding the brackets. This is because for a Lyndon word $w$ one has $\lambda(w) = w + \text{lexicographically larger words}$ \cite[Lemma~10.2]{Luedtke:motivic_selmer_scheme}, and there is no mixing between the three Lyndon words of length~4 as they have different $e_1$-degrees. (This is not true in general, see \cite[Example 10.4]{Luedtke:motivic_selmer_scheme}.)
Using \Cref{galois-action-lie-algebras}, we can calculate
\begin{align*}
	\alpha &= f_{e_0e_0e_0e_1}(\sigma_3.e_1) \\
 &= f_{e_0e_0e_0e_1}([e_1, \ka_1(\sigma_3)]) \\
 &= -f_{e_0e_0e_0e_1}([\ka_1(\sigma_3), e_1]) \\
 &= -f_{e_0e_0e_0}(\ka_1(\sigma_3))\\
 &= 0.
\end{align*}
Here we are using that $\ka_1(\sigma_3)$ as an element of the free Lie algebra on $\{e_0, e_1\}$ has its $e_0e_0e_0$-coefficient equal to zero. Furthermore,
\begin{align*}
	\beta &= f_{e_0e_0e_1e_1}(\sigma_3.e_1) \\
 &= f_{e_0e_0e_1e_1}([e_1, \ka_1(\sigma_3)]) \\
 &= - f_{e_0e_0e_1e_1}([\ka_1(\sigma_3), e_1]) \\
 &= - f_{e_0e_0e_1}(\ka_1(\sigma_3)) \\
 &= - \langle \sigma_3, \zeta^{\fu}(3) \rangle \\
 &= -1.
\end{align*}
Here we used \Cref{lem:kummer-cocycle-via-mzv} and the choice of $\sigma_3$. Lastly,
\begin{align*}
	\gamma &= f_{e_0e_1e_1e_1}(\sigma_3.e_1) \\
 &= f_{e_0e_1e_1e_1}([e_1, \ka_1(\sigma_3)]) \\
 &= - f_{e_0e_1e_1e_1}([\ka_1(\sigma_3), e_1]) \\
 &= - f_{e_0e_1e_1}(\ka_1(\sigma_3)) \\
 &= - \langle \sigma_3, \zeta^{\fu}(2,1) \rangle \\
 &= - \langle \sigma_3, \zeta^{\fu}(3) \rangle \\
 &= -1,
\end{align*}
where we used \Cref{lem:kummer-cocycle-via-mzv}, the identity of motivic MZVs $\zeta^{\fu}(2,1) = \zeta^{\fu}(3)$ (\Cref{zeta3 relation}), and the normalization of~$\sigma_3$. In conclusion, we find
\[ \sigma_3.e_1 = -[e_0,[[e_0,e_1],e_1]] - [[[e_0,e_1],e_1],e_1]. \]

\section{The cocycle evaluation map in level~4}
\label{sec: cocycle evaluation map in level 4}

Let $\Pi_4 \coloneqq \pi_1(X)_4$ be the level~4 quotient of the motivic fundamental group of the thrice-punctured line, i.e., the quotient by the fifth step of the descending central series. By \Cref{lyndon-basis-of-coordinate-ring}, the unipotent group $\Pi_4^{\dR}$ has dimension~8 with coordinates given by
\[ \log, \; \Li_1, \Li_2,\; \Li_3,\; \Li_{2,1},\; \Li_4,\; \Li_{3,1},\; \Li_{2,1,1}, \]
corresponding to the Lyndon words of length at most four (\Cref{lyndon-words-of-length-le-4}). Let $S$ be a finite set of primes and let $\Sigma = \{ \tau_\ell : \ell \in S \} \cup \{\sigma_3, \sigma_5, \ldots \}$ be free generators of~$\nf(Z)$ chosen as in §\ref{sec:normalized-generators}.
The motivic Selmer scheme is the space of cocycles
\[ \rZ^1(\UZ, \Pi_4)^{\bG_m}. \]

By \Cref{ex:coordinates-depth-4}, this is an affine space of dimension $2\#S + 2$ with coordinates
\[ x_{\ell} \; (\ell \in S), \quad y_{\ell} \; (\ell \in S), \quad z_3, \quad z_{2,1}, \]
defined by
\begin{equation}
	\label{eq: level 4 coordinates}
	\begin{split}
		x_{\ell}(c) &= L_{e_0}(c(\tau_{\ell})), \\
		z_3(c) &= L_{e_0 e_0 e_1}(c(\sigma_3)),
	\end{split}
	\qquad
	\begin{split}
		y_{\ell}(c) &= L_{e_1}(c(\tau_{\ell})),\\
		z_{2,1}(c) &= L_{e_0 e_1 e_1}(c(\sigma_3)),
	\end{split}
\end{equation}
where $\{L_w\}_w$ with $w$ Lyndon of length $\leq 4$ denotes the dual basis of the Lyndon basis $\{ \lambda(w)\}_{w}$ of $\Lie(\Pi_4^{\dR})$, and the extension of the group cocycle~$c$ to a graded cocycle of Hopf algebras $\cH(Z) \to \cH(\Pi_4^{\dR})$, as well as the associated graded Lie algebra cocycle $\fn(Z) \to \Lie(\Pi_4^{\dR})$ are denoted by the same letter.

Let $A(Z) = \cO(\UZ)$ and let 
\[ \eta_{\univ} = \sum_{u} a_u u \in A(Z)\llangle \Sigma \rrangle^{\gp} = \UZ(A(Z)) \]
be the universal point of $\UZ$. Thus, $a_u \in A(Z)$, for a word $u$ over~$\Sigma$, is the coefficient extraction functional $f_u$ (\Cref{def:coeff-extraction-functions}) mapping each $\gamma \in \UZ$ to its $u$-coefficient when expanded as a group-like noncommutative power series in the chosen generators~$\Sigma$. We write $a_u$ instead of $f_u$ to emphasize their role as coefficients in the universal cocycle evaluation map, and to distinguish them notationally from the functions $f_w$ on $\pi_1^{\dR}(X)$ for words $w$ on $\{e_0,e_1\}$.

Given a fundamental group quotient $\pi_1^{\mot}(X) \twoheadrightarrow \Pi$, the \emph{universal cocycle evaluation map} is the map
\[ \ev\colon \rZ^1(\UZ, \Pi)^{\bG_m} \otimes_{\bQ} A(Z) \lto \Pi \otimes_{\bQ} A(Z), \quad c \mapsto c(\eta_{\univ}). \]
We compute this map in the case where $\Pi = \Pi_4$ is the full level-$4$ quotient of $\pi_1^{\mot}(X)$. Using the coordinates above, this is a map of affine $A(Z)$-schemes
{\small
\[ \ev\colon \Spec A(Z)[(x_{\ell})_{\ell \in S}, (y_{\ell})_{\ell \in S}, z_3, z_{2,1}] \to \Spec A(Z)[\log,\Li_1,\Li_2,\Li_3,\Li_{2,1},\Li_4,\Li_{3,1},\Li_{2,1,1}]. \]
}

\begin{thm}
\label{univcoord}
	The universal cocycle evaluation map in level 4 is given in coordinates as follows:
	\begin{align*}
		\ev^\sharp \log &= \sum_{\ell \in S} a_{\tau_{\ell}} x_{\ell},\\
		\ev^\sharp \Li_1 &= \sum_{\ell \in S} a_{\tau_{\ell}} y_{\ell},\\
		\ev^\sharp \Li_2 &= \sum_{\ell,q \in S} a_{\tau_{\ell} \tau_q} x_{\ell} y_q,\\
		\ev^\sharp \Li_3 &= \sum_{\ell_1,\ell_2,q \in S} a_{\tau_{\ell_1} \tau_{\ell_2}\tau_q} x_{\ell_1} x_{\ell_2} y_q + a_{\sigma_3} z_3,\\
		\ev^\sharp \Li_{2,1} &= \sum_{\ell,q_1,q_2 \in S} a_{\tau_{\ell} \tau_{q_1}\tau_{q_2}} x_{\ell} y_{q_1} y_{q_2} + a_{\sigma_3} z_{2,1},\\
		\ev^\sharp \Li_{4} &= \sum_{\ell_1,\ell_2,\ell_3,q \in S} a_{\tau_{\ell_1} \tau_{\ell_2} \tau_{\ell_3}\tau_{q}} x_{\ell_1} x_{\ell_2} x_{\ell_3} y_q + \sum_{\ell \in S} a_{\tau_{\ell} \sigma_3} x_{\ell} z_3,\\
		\ev^\sharp \Li_{3,1} &= \sum_{\ell_1,\ell_2,q_1,q_2 \in S} a_{\tau_{\ell_1} \tau_{\ell_2} \tau_{q_1}\tau_{q_2}} x_{\ell_1} x_{\ell_2} y_{q_1} y_{q_2} + \sum_{\ell \in S} a_{\tau_{\ell} \sigma_3} x_{\ell} z_{2,1} + \sum_{\ell \in S} a_{\sigma_3 \tau_{\ell}} y_{\ell} (z_3 - 1),\\
		\ev^\sharp \Li_{2,1,1} &= \sum_{\ell, q_1,q_2, q_3 \in S} a_{\tau_{\ell} \tau_{q_1} \tau_{q_2}\tau_{q_3}} x_{\ell} y_{q_1} y_{q_2} y_{q_3} + \sum_{\ell \in S} a_{\sigma_3 \tau_{\ell}} y_{\ell} (z_{2,1} - 1).
	\end{align*}
\end{thm}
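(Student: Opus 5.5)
Since $\ev^\sharp \Li_w$ is just the $w$-coefficient of $c(\eta_{\univ})$, the plan is to expand
\[ c(\eta_{\univ}) = \sum_u a_u\, c(u) \in A(Z)\llangle e_0,e_1\rrangle/(\text{length}\ge 5), \]
where $c$ is the Hopf-algebra extension of the cocycle and $u$ runs over words in $\Sigma$. By $\bG_m$-equivariance, $c(u)$ has half-weight equal to that of $u$. Modulo words of length $\ge 5$, only words $u$ of half-weight $\ge -4$ contribute. These are the words in the $\tau_\ell$ of length $\le 4$, together with $\sigma_3$, $\tau_\ell\sigma_3$ and $\sigma_3\tau_\ell$. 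So the whole proof reduces to computing $c(u)$ for these finitely many words and reading off the coefficients of the Lyndon words $e_0$, $e_1$, $e_0e_1$, $e_0e_0e_1$, $e_0e_1e_1$, $e_0^3e_1$, $e_0^2e_1^2$, $e_0e_1^3$.

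\textbf{Words in the $\tau_\ell$ only.} Since $U_Z$ acts through $U_{\bZ}$ (\Cref{galois-action-lie-algebras}\ref{item:action factors}), each $\tau_\ell$ acts trivially on $\fn(X)$. By \eqref{eq:cocycle-on-lie-element}, $c$ is then multiplicative on such words:
\[ c(\tau_{\ell_1}\cdots\tau_{\ell_k}) = \prod_i (x_{\ell_i}e_0 + y_{\ell_i}e_1). \]
Its coefficient on a word $e_{i_1}\cdots e_{i_k}$ is $\prod_j v^{(j)}_{i_j}$, with $v_0 = x$ and $v_1 = y$. This gives exactly the $\tau$-sums in the statement. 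For instance, the $e_0e_0e_1e_1$-coefficient is $\sum a_{\tau_{\ell_1}\tau_{\ell_2}\tau_{q_1}\tau_{q_2}}x_{\ell_1}x_{\ell_2}y_{q_1}y_{q_2}$.

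\textbf{The $\sigma_3$ term.} $c(\sigma_3)$ is a Lie element of degree 3. We write it as $z_3\lambda(e_0e_0e_1) + z_{2,1}\lambda(e_0e_1e_1)$. Its word-coefficients at $e_0e_0e_1$ and $e_0e_1e_1$ are $z_3$ and $z_{2,1}$, because the Lyndon basis is triangular and there is no mixing in different $e_1$-degrees. This gives the $a_{\sigma_3}$ terms.

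\textbf{The word $\tau_\ell\sigma_3$.} Here \eqref{eq:cocycle-on-lie-element} with $\sigma=\tau_\ell$ (acting trivially) gives
\[ c(\tau_\ell\sigma_3) = (x_\ell e_0 + y_\ell e_1)\,c(\sigma_3). \]
Since $c(\sigma_3)$ has no words beginning with $e_1$ (it lies in the span of $e_0e_0e_1$- and $e_0e_1e_1$-type words after bracketing, and we only need Lyndon coefficients), its contributions at $e_0^3e_1$, $e_0^2e_1^2$ and $e_0e_1^3$ are $x_\ell z_3$, $x_\ell z_{2,1}$ and $0$ respectively.

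\textbf{The word $\sigma_3\tau_\ell$ (main obstacle).} This is the step where the Galois action intervenes:
\[ c(\sigma_3\tau_\ell) = c(\sigma_3)\,c(\tau_\ell) + \sigma_3.(x_\ell e_0 + y_\ell e_1). \]
By \Cref{galois-action-lie-algebras}\ref{item:action on e0 trivial} we have $\sigma_3.e_0 = 0$. For $\sigma_3.e_1$ we use the computation of §\ref{sec:galois-action-example},
\[ \sigma_3.e_1 = -\lambda(e_0e_0e_1e_1) - \lambda(e_0e_1e_1e_1), \]
which relied on $\zeta^{\fu}(2,1) = \zeta^{\fu}(3)$ (\Cref{zeta3 relation}). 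Its Lyndon word-coefficients are $0$ at $e_0^3e_1$, $-1$ at $e_0^2e_1^2$ and $-1$ at $e_0e_1^3$. The product $c(\sigma_3)(x_\ell e_0 + y_\ell e_1)$ contributes $y_\ell z_3$ at $e_0e_0e_1e_1$ and $y_\ell z_{2,1}$ at $e_0e_1e_1e_1$. Its $e_0$-ending words are not Lyndon, and it has no $e_0^3e_1$ part. Adding these yields $y_\ell(z_3-1)$ and $y_\ell(z_{2,1}-1)$.

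One subtlety remains: the coordinates $\Li_w$ are word-coefficients (shuffle functions), not the dual Lyndon functionals $L_w$. I will therefore verify carefully that every coefficient is extracted as a word-coefficient, applying the Lyndon-basis conversion only to $c(\sigma_3)$ and $\sigma_3.e_1$. For these two elements the conversion is harmless, since degree-3 and degree-4 Lyndon words are separated by $e_1$-degree. This bookkeeping, together with the sign conventions in $\sigma_3.e_1$, is where I expect errors to be most likely.
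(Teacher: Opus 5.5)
Your proposal is correct and follows the paper's proof essentially step for step. It expands $c(\eta_{\univ})=\sum_u a_u\,c(u)$ by half-weight, uses multiplicativity of $c$ on words in the $\tau_\ell$, and handles $\tau_\ell\sigma_3$ and $\sigma_3\tau_\ell$ via the cocycle formula together with $\sigma_3.e_1=-\lambda(e_0e_0e_1e_1)-\lambda(e_0e_1e_1e_1)$.

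One justification needs fixing. In the $\tau_\ell\sigma_3$ step it is false that $c(\sigma_3)$ has no words beginning with $e_1$; for instance $\lambda(e_0e_0e_1)=[e_0,[e_0,e_1]]$ contains the word $e_1e_0e_0$. The correct reason is that every target Lyndon word of length $4$ begins with $e_0$, so only the $x_\ell e_0$ factor contributes. The $e_0e_1e_1e_1$-coefficient then vanishes because $f_{e_1e_1e_1}(c(\sigma_3))=0$ by primitivity, as in the paper.
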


\begin{proof}
	Let $c \in \rZ^1(\UZ, \Pi_4)^{\bG_m} \otimes_{\bQ} R$ be a $\bG_m$-equivariant cocycle defined over some $A(Z)$-algebra~$R$. Denote its coordinates~\eqref{eq: level 4 coordinates} by $x_{\ell} \coloneqq x_{\ell}(c)$, $y_{\ell} \coloneqq y_{\ell}(c)$, $z_{3} \coloneqq z_3(c)$, $z_{2,1} \coloneqq z_{2,1}(c) \in R$. We need to calculate 
	\[ (\ev^\sharp f_w)(c) = f_w(c(\eta_{\univ})) = \sum_u a_u f_w(c(u)) \]
	for all Lyndon words $w$ over $\{e_0, e_1\}$ of length $\leq 4$. As $c$ is graded, we can restrict the sum to those words~$u$ over~$\Sigma$ which have the same half-weight as $w$. In order to evaluate $f_w(c(u))$, the idea is to reduce to the case where $u$ is a single-letter word by using the cocycle property~\eqref{eq:cocycle-on-lie-element}, the Galois action (\Cref{galois-action-lie-algebras}), and the fact that 
	\[ f_{w_1 w_2}(X_1 X_2) = f_{w_1}(X_1)\cdot f_{w_2}(X_2) \]
	whenever $X_1, X_2 \in \bQ\llangle e_0,e_1\rrangle$ are homogeneous of the same half-weights as the words $w_1, w_2$. When $u$ is a single-letter word, i.e., one of the Lie algebra generators $\tau_{\ell}$ or $\sigma_n$, we have 
	\[ f_{e_0}(c(\tau_{\ell})) = x_{\ell}, \qquad f_{e_1}(c(\tau_{\ell})) = y_{\ell}, \]
	and, by writing $c(\sigma_n) = \sum_{w'} L_{w'}(c(\sigma_n)) \lambda(w')$ in the Lyndon basis:
	\[ f_w(c(\sigma_n)) = \sum_{\substack{w'\text{ Lyndon}\\ |w'|=n}} L_{w'}(c(\sigma_n)) f_w(\lambda(w')) =  \sum_{\substack{w'\text{ Lyndon}\\ |w'|=n}} f_w(\lambda(w')) z_{w'}, \]
	a linear combination of Selmer coordinates. For $n = 3$, the Lyndon words are $e_0 e_0 e_1$ and $e_0 e_1 e_1$, and we simply have
	\[ f_{e_0e_0e_1}(c(\sigma_3)) = z_3, \qquad f_{e_0e_1e_1}(c(\sigma_3)) = z_{2,1}. \]
	
	We begin computing the cocycle evaluation map in half-weight~$-1$, i.e., computing $\ev^{\sharp}\log$ and $\ev^{\sharp}\Li_1$, corresponding to the words $w = e_0$ and $w = e_1$, respectively. On the arithmetic side, the words~$u$ over~$\Sigma$ of half-weight~$-1$ are the single-letter words $\tau_{\ell}$ for $\ell \in S$, so we get
	\begin{align*}
		\log(c(\eta_{\univ})) &= \sum_{\ell \in S} a_{\tau_{\ell}} f_{e_0}(c(\tau_{\ell})) = \sum_{\ell \in S} a_{\tau_{\ell}} x_{\ell}
	\intertext{and}
		\Li_1(c(\eta_{\univ})) &= \sum_{\ell \in S} a_{\tau_{\ell}} f_{e_1}(c(\tau_{\ell})) = \sum_{\ell \in S} a_{\tau_{\ell}} y_{\ell}.
	\end{align*}
	
	In half-weight~$-2$ we have the arithmetic words $\tau_{\ell} \tau_{q}$ with $\ell, q \in S$. To evaluate the cocycle $c$ on the product $\tau_{\ell} \tau_q$ we use the cocycle condition~\eqref{eq:cocycle-on-lie-element} and the fact that the $\tau_{\ell}$ act trivially on~$\nf(X)$ by \Cref{galois-action-lie-algebras}\,\ref{item:action factors}:
	\begin{align*}
		c(\tau_{\ell} \tau_q) = c(\tau_{\ell}) c(\tau_q) + \tau_{\ell}.c(\tau_q) = c(\tau_{\ell}) c(\tau_q).
	\end{align*}
	Using this, we can calculate
	\begin{align*}
		\Li_2(c(\eta_{\univ})) &= \sum_{\ell,q \in S} a_{\tau_{\ell} \tau_q} f_{e_0 e_1} (c(\tau_{\ell} \tau_q)) = \sum_{\ell,q \in S} a_{\tau_{\ell} \tau_q} f_{e_0 e_1} (c(\tau_{\ell})c(\tau_q))\\
		&= \sum_{\ell,q \in S} a_{\tau_{\ell} \tau_q} f_{e_0}(c(\tau_{\ell})) f_{e_1}(c(\tau_q)) = \sum_{\ell,q \in S} a_{\tau_{\ell} \tau_q} x_{\ell} y_q.
	\end{align*}
	
	In half-weight~$-3$, we have the words $\tau_{\ell_1} \tau_{\ell_2} \tau_{\ell_3}$ with $\ell_1, \ell_2, \ell_3 \in S$ as well as the single-letter word $\sigma_3$. As above, the evaluation of~$c$ on a product of $\tau_{\ell}$'s is given by
	\[ c(\tau_{\ell_1} \tau_{\ell_2} \tau_{\ell_3}) = c(\tau_{\ell_1}) c(\tau_{\ell_2}) c(\tau_{\ell_3}). \]
	We get
	\begin{align*}
		\Li_3(c(\eta_{\univ})) &= \sum_{\ell_1, \ell_2, q \in S} a_{\tau_{\ell_1} \tau_{\ell_2} \tau_q} f_{e_0e_0e_1}(c(\tau_{\ell_1} \tau_{\ell_2} \tau_q)) + a_{\sigma_3} f_{e_0e_0e_1}(c(\sigma_3)) \\
		&= \sum_{\ell_1, \ell_2, q \in S} a_{\tau_{\ell_1} \tau_{\ell_2} \tau_q} f_{e_0}(c(\tau_{\ell_1})) f_{e_0}(c(\tau_{\ell_2})) f_{e_1}(c(\tau_q)) + a_{\sigma_3} z_3\\
		&= \sum_{\ell_1,\ell_2,q \in S} a_{\tau_{\ell_1} \tau_{\ell_2}\tau_q} x_{\ell_1} x_{\ell_2} y_q + a_{\sigma_3} z_3.
	\intertext{and similarly}
		\Li_{2,1}(c(\eta_{\univ})) &= \sum_{\ell, q_1, q_2 \in S} a_{\tau_{\ell} \tau_{q_1} \tau_{q_2}} f_{e_0e_1e_1}(c(\tau_{\ell} \tau_{q_1} \tau_{q_2})) + a_{\sigma_3} f_{e_0e_1e_1}(c(\sigma_3)) \\
		&= \sum_{\ell,q_1,q_2 \in S} a_{\tau_{\ell} \tau_{q_1}\tau_{q_2}} x_{\ell} y_{q_1} y_{q_2} + a_{\sigma_3} z_{2,1}.
	\end{align*}
	Now we consider words over $\Sigma$ of half-weight~$-4$. They are given by $\tau_{\ell_1} \tau_{\ell_2} \tau_{\ell_3} \tau_{\ell_4}$ with $\ell_i \in S$, as well as $\tau_{\ell} \sigma_3$ and $\sigma_3 \tau_{\ell}$ for $\ell \in S$. Evaluating the cocycle~$c$ on these yields
	\begin{align*}
		c(\tau_{\ell_1} \tau_{\ell_2} \tau_{\ell_3} \tau_{\ell_4}) &= c(\tau_{\ell_1}) c(\tau_{\ell_2}) c(\tau_{\ell_3}) c(\tau_{\ell_4}),\\
		c(\tau_{\ell} \sigma_3) &= c(\tau_{\ell}) c(\sigma_3),\\
		c(\sigma_3 \tau_{\ell}) &= c(\sigma_3) c(\tau_{\ell}) + \sigma_3 . c(\tau_{\ell}).
	\end{align*}
	The last equation is the first instance where the motivic Galois action enters the calculation nontrivially. In order to evaluate $\sigma_3 . c(\tau_{\ell})$, note that since $c$ is graded, $c(\tau_\ell)$ is an element of $\Lie(\Pi_4^{\dR})$ of half-weight~$-1$, so it is a linear combination of $e_0$ and $e_1$, namely
	\[ c(\tau_\ell) = x_{\ell} e_0 + y_{\ell} e_1 \]
	by definition of $x_{\ell}$ and $y_{\ell}$. By \Cref{galois-action-lie-algebras}\,\ref{item:action on e0 trivial}, the action of~$\sigma_3$ on~$e_0$ is trivial, i.e., $\sigma_3.e_0 = 0$. Thus, we have
	\begin{equation}
		\label{eq: sigma3 c tau_ell}
		\sigma_3 . c(\tau_{\ell}) = y_{\ell} \cdot (\sigma_3 . e_1).
	\end{equation}
	In §\ref{sec:galois-action-example}, we calculated the action of~$\sigma_3$ on~$e_1$. Specifically, we calculated the coefficients $f_w(\sigma_3.e_1)$ for all Lyndon words~$w$ of length~$4$:
	\begin{subequations}
		\begin{equation}
			\label{eq:s3 e1 coeff 0001}
			f_{e_0 e_0 e_0 e_1}(\sigma_3.e_1) = 0,
		\end{equation}
		\begin{equation}
			\label{eq:s3 e1 coeff 0011}
			f_{e_0 e_0 e_1 e_1}(\sigma_3.e_1) = -1,
		\end{equation}
		\begin{equation}
			\label{eq:s3 e1 coeff 0111}
			f_{e_0 e_1 e_1 e_1}(\sigma_3.e_1) = -1.
		\end{equation}
	\end{subequations}
	This allows us to compute $f_w(c(u))$ for all $w$ and $u$ of half-weight~$-4$, and hence the cocycle evaluation map in half-weight~$-4$:
	\begin{align*}
		\Li_4(c(\eta_{\univ})) &= \sum_{\ell_1,\ell_2,\ell_3,q \in S} a_{\tau_{\ell_1} \tau_{\ell_2} \tau_{\ell_3} \tau_q} f_{e_0e_0e_0e_1}(c(\tau_{\ell_1}) c(\tau_{\ell_2}) c(\tau_{\ell_3}) c(\tau_{q})) \\
		& \qquad + \sum_{\ell \in S} a_{\tau_{\ell} \sigma_3} f_{e_0e_0e_0e_1}(c(\tau_{\ell}) c(\sigma_3)) \\
		& \qquad + \sum_{\ell \in S} a_{\sigma_3 \tau_{\ell}} f_{e_0e_0e_0e_1}(c(\sigma_3) c(\tau_{\ell}) + \sigma_3 . c(\tau_{\ell})) 
		\\[2mm]
		&= \sum_{\ell_1,\ell_2,\ell_3,q \in S} a_{\tau_{\ell_1} \tau_{\ell_2} \tau_{\ell_3} \tau_q} x_{\ell_1} x_{\ell_2} x_{\ell_3} y_{q} \\
		& \qquad + \sum_{\ell \in S} a_{\tau_{\ell} \sigma_3} f_{e_0}(c(\tau_{\ell})) f_{e_0e_0e_1}(c(\sigma_3)) \\
		& \qquad + \sum_{\ell \in S} a_{\sigma_3 \tau_{\ell}} \Bigl( f_{e_0e_0e_0}(c(\sigma_3)) f_{e_1}(c(\tau_{\ell})) + y_{\ell} f_{e_0e_0e_0e_1}(\sigma_3 . e_1) \Bigr) & \text{(by~\eqref{eq: sigma3 c tau_ell})} 
		\\[2mm]
		&= \sum_{\ell_1,\ell_2,\ell_3,q \in S} a_{\tau_{\ell_1} \tau_{\ell_2} \tau_{\ell_3} \tau_q} x_{\ell_1} x_{\ell_2} x_{\ell_3} y_{q} + \sum_{\ell \in S} a_{\tau_{\ell} \sigma_3} x_{\ell} z_3.
	\end{align*}
	In the last equality the third sum disappeared because $f_{e_0 e_0 e_0}(c(\sigma_3)) = 0$ (as a consequence of $c(\sigma_3)$ being primitive) and $f_{e_0e_0e_0e_1}(\sigma_3 . e_1) = 0$ by Eq.~\eqref{eq:s3 e1 coeff 0001}.
	
	The calculation of $\Li_{3,1}(c(\eta_{\univ}))$ corresponding to the Lyndon word $e_0e_0e_1e_1$ is similar, so we focus on the differences. Firstly, from the words $\tau_{\ell} \sigma_3$ we get the terms
	\[ f_{e_0e_0e_1e_1}(c(\tau_{\ell}) c(\sigma_3)) = f_{e_0}(c(\tau_{\ell})) f_{e_0e_1e_1}(c(\sigma_3)) = x_{\ell} z_{2,1}. \]
	Secondly, from the words $\sigma_3 \tau_{\ell}$ we get the terms
	\begin{align*}
		f_{e_0e_0e_1e_1}(c(\sigma_3) c(\tau_{\ell})) &= f_{e_0e_0e_1}(c(\sigma_3)) f_{e_1}(c(\tau_{\ell})) = z_3 y_{\ell}
	\intertext{and}
		f_{e_0e_0e_1e_1}(\sigma_3 . c(\tau_{\ell})) &\overset{\eqref{eq: sigma3 c tau_ell}}{=} y_{\ell} \cdot f_{e_0e_0e_1e_1}(\sigma_3 . e_1) \overset{\eqref{eq:s3 e1 coeff 0011}}{=} - y_{\ell}.
	\end{align*}
	This implies the claimed formula for $\ev^\sharp \Li_{3,1}$. Finally, for $\Li_{2,1,1}$, which corresponds to $e_0e_1e_1e_1$, we use
	\[ f_{e_0e_1e_1e_1}(c(\tau_{\ell}) c(\sigma_3)) = f_{e_0}(c(\tau_{\ell})) f_{e_1e_1e_1}(c(\sigma_3)) = 0, \]
	where $f_{e_1 e_1 e_1}(c(\sigma_3)) = 0$ by primitivity; moreover,
	\begin{align*}
		f_{e_0e_1e_1e_1}(c(\sigma_3) c(\tau_{\ell})) &= f_{e_0e_1e_1}(c(\sigma_3)) f_{e_1}(c(\tau_{\ell})) = z_{2,1} y_{\ell}
	\intertext{and}
		f_{e_0e_1e_1e_1}(\sigma_3 . c(\tau_{\ell})) &\overset{\eqref{eq: sigma3 c tau_ell}}{=} y_{\ell} \cdot f_{e_0e_1e_1e_1}(\sigma_3 . e_1) \overset{\eqref{eq:s3 e1 coeff 0111}}{=} - y_{\ell}. \qedhere
	\end{align*}
\end{proof}

\section{Construction of Kim functions}
\label{sec:kim-functions}

\subsection{Chabauty--Kim for $S = \{2\}$}
\label{sec:S=2}

Let us specialize the discussion of the previous section to the case $S = \{2\}$ and write $x \coloneqq x_2$, $y \coloneqq y_2$. Note that the choice of the Lie algebra generator $\sigma_3$ in the one-dimensional space $\fn(Z)_{-3}$ is uniquely determined by the condition $\langle \sigma_3, \zeta^{\fu}(3)\rangle = 1$.
We know the periods 
\[ a_{\tau_2^k} = \frac1{k!}\log^{\fu}(2)^k, \qquad a_{\sigma_3} = \zeta^{\fu}(3) \]
by \Cref{single-letter-periods} and Eq.~\eqref{eq:shuffle-identity-for-power}, so \Cref{univcoord} specializes as follows:
\begin{thm}
	\label{localization map 2}
	For $S = \{2\}$, the universal cocycle evaluation map in level~4,
	\[ \ev\colon \Spec A(Z)[x, y, z_3, z_{2,1}] \to \Spec A(Z)[\log,\Li_1,\Li_2,\Li_3,\Li_{2,1},\Li_4, \Li_{3,1}, \Li_{2,1,1}], \]
	is given by
	\begin{align*}
		\ev^\sharp \log &= \log^{\fu}(2) x,\\
		\ev^\sharp \Li_1 &= \log^{\fu}(2) y,\\
		\ev^\sharp \Li_2 &= \frac12 \log^{\fu}(2)^2 xy,\\
		\ev^\sharp \Li_3 &= \frac16 \log^{\fu}(2)^3 x^2 y + \zeta^{\fu}(3) z_3, \\
		\ev^\sharp \Li_{2,1} &= \frac16 \log^{\fu}(2)^3 x y^2 + \zeta^{\fu}(3) z_{2,1},\\
		\ev^\sharp \Li_{4} &= \frac1{24} \log^{\fu}(2)^4 x^3 y + a_{\tau_2 \sigma_3} x z_3,\\
		\ev^\sharp \Li_{3,1} &= \frac1{24} \log^{\fu}(2)^4 x^2 y^2 + a_{\tau_2 \sigma_3} x z_{2,1} + a_{\sigma_3 \tau_2} y (z_3 - 1),\\
		\ev^\sharp \Li_{2,1,1} &= \frac1{24} \log^{\fu}(2)^4 x y^3 + a_{\sigma_3 \tau_2} y (z_{2,1} - 1). 
	\end{align*}
\end{thm}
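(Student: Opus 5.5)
This is a direct specialization of \Cref{univcoord}, so the plan is simply to substitute. With $S=\{2\}$ every sum over $S$ in \Cref{univcoord} collapses to a single term. We write $x=x_2$, $y=y_2$, and every word in the $\tau$'s becomes a power $\tau_2^k$. For instance $a_{\tau_{\ell_1}\tau_{\ell_2}\tau_q}$ becomes $a_{\tau_2^3}$, and the monomial $x_{\ell_1}x_{\ell_2}y_q$ becomes $x^2y$. The same holds for the half-weight $-4$ terms with $\tau_2\sigma_3$ and $\sigma_3\tau_2$; these periods are left unevaluated as $a_{\tau_2\sigma_3}$ and $a_{\sigma_3\tau_2}$ in the statement.

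Next we evaluate the remaining single-letter and power periods. By \Cref{single-letter-periods}, $a_{\tau_2}=\log^{\fu}(2)$ and $a_{\sigma_3}=\zeta^{\fu}(3)$; the normalization $\langle\sigma_3,\zeta^{\fu}(3)\rangle=1$ determines $\sigma_3$ uniquely, because $\fn(Z)_{-3}$ is one-dimensional. Eq.~\eqref{eq:shuffle-identity-for-power} then gives $a_{\tau_2^k}=\frac1{k!}\log^{\fu}(2)^k$. Substituting $k=1,2,3,4$ yields the coefficients $1,\frac12,\frac16,\frac1{24}$ in front of the pure-$\tau$ monomials.

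Finally we match each line against \Cref{univcoord}. The $\Li_3$ and $\Li_{2,1}$ lines acquire $\zeta^{\fu}(3)z_3$ and $\zeta^{\fu}(3)z_{2,1}$. The half-weight $-4$ lines keep the $x z_3$, $xz_{2,1}$, $y(z_3-1)$ and $y(z_{2,1}-1)$ terms with the coefficients $a_{\tau_2\sigma_3}$ and $a_{\sigma_3\tau_2}$. There is no real obstacle here: the only point requiring care is to check that the Galois-action contributions (the ``$-1$'' shifts) are inherited verbatim from \Cref{univcoord}, and they are.
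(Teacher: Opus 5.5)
Your proposal is correct and follows the paper's argument exactly: specialize \Cref{univcoord} to $S=\{2\}$, using $a_{\tau_2}=\log^{\fu}(2)$ and $a_{\sigma_3}=\zeta^{\fu}(3)$ from \Cref{single-letter-periods} and $a_{\tau_2^k}=\frac{1}{k!}\log^{\fu}(2)^k$ from Eq.~\eqref{eq:shuffle-identity-for-power}, with the Galois-action ``$-1$'' shifts inherited verbatim. Your remark that $\sigma_3$ is uniquely pinned down because $\fn(Z)_{-3}$ is one-dimensional is also exactly the observation the paper makes.
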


Recall from §\ref{sec:cocycle-evaluation-map} that the $p$-adic localization map in the Chabauty--Kim diagram is obtained from the universal cocycle evaluation map by base change along the period homomorphism $\per_p\colon A(Z) \to \bQ_p$. This amounts to replacing each motivic period $a_u$ with its $p$-adic value, for a word $u$ in the generators $\Sigma = \{\tau_2,\sigma_3,\sigma_5,\ldots\}$. We write this as
\[ a_u^{\fp} \coloneqq \per_p(a_u) \in \bQ_p \]
but we will often drop the superscript to simplify the notation, i.e., we write $a_u$ both for the motivic period and its $p$-adic value when the meaning is clear, as in the following theorem.

\begin{thm}
\label{4functions}
	Let $Z = \Spec \bZ[1/2]$ and let $p$ be an odd prime. The following equations hold on the Chabauty--Kim locus $X(\bZ_p)_{Z,4}$:
	\begin{align*}
		0 &= \Li_2(z) - \tfrac12 \log(z) \Li_1(z),\\
		0 &=\log(2) \zeta(3) \Li_4(z) - a_{\tau_2 \sigma_3} \log(z) \Li_3(z) - \tfrac1{24}\left(\log(2) \zeta(3) - 4 a_{\tau_2 \sigma_3}\right) \log(z)^3 \Li_1(z),\\
		0 &= \log(2) \zeta(3) \left( \Li_{3,1}(z) + \tfrac18 \log(z)^2 \Li_1(z)^2\right) - a_{\tau_2 \sigma_3} \log(z) \Li_{2,1}(z) \\
		&\quad - a_{\sigma_3 \tau_2} \Li_1(z) \Li_3(z) + \zeta(3) a_{\sigma_3 \tau_2} \Li_1(z),\\
		0 &= \log(2) \zeta(3) \Li_{2,1,1}(z) - a_{\sigma_3 \tau_2} \Li_1(z) \Li_{2,1}(z) \\
		&\quad - \tfrac1{24}\left(\log(2) \zeta(3) - 4 a_{\sigma_3\tau_2}\right) \log(z) \Li_1(z)^3 + \zeta(3) a_{\sigma_3 \tau_2} \Li_1(z).
	\end{align*}
\end{thm}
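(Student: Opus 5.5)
The plan is to eliminate the Selmer coordinates $x, y, z_3, z_{2,1}$ from the $p$-adic specialization of the evaluation map in \Cref{localization map 2}.

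Base change along $\per_p$ gives $\log(2)$, $\zeta(3)$ and the $p$-adic values $a_{\tau_2\sigma_3}$, $a_{\sigma_3\tau_2}$ in place of the motivic periods. For each of the four displayed functions $F$ on $\Pi_4^{\dR}$, I will check that $\ev^\sharp F$ vanishes identically as a polynomial in $x,y,z_3,z_{2,1}$. Such an $F$ lies in $\cI^{\dR}_{Z,\Pi_4}$. This uses the commutative square identifying $\ev_p$ with $\logBK\circ\loc_p$ under \eqref{eq:motivic-etale-comparison}: the image of the Selmer scheme is the image of $\ev_p$. Then pulling back along $\kappa_p^{\dR}$, using $\Li_w(\kappa_p^{\dR}(z)) = \Li^p_w(z)$, gives Kim functions vanishing on $X(\bZ_p)_{Z,4}$.

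We need $\log(2)\neq 0$ in $\bQ_p$ (true since $p$ is odd and $2$ is not a root of unity). We also need $\zeta^p(3)\ne0$. If $\zeta^p(3)=0$, I would note that the identities below still hold as polynomial identities after multiplying through. In fact, no division is needed if we verify the identities directly rather than solve.

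Set $L=\log(2)$, $\zeta=\zeta(3)$, $A=a_{\tau_2\sigma_3}$, $B=a_{\sigma_3\tau_2}$, and write $\ell=\ev^\sharp\log = Lx$, $\lambda=\ev^\sharp\Li_1=Ly$.

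\emph{First identity.} We have $\ev^\sharp\Li_2=\tfrac12 L^2xy=\tfrac12\ell\lambda$, which gives the first identity immediately.

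\emph{Second identity.} From $\ev^\sharp\Li_3$ we get $\zeta z_3 = \ev^\sharp\Li_3 - \tfrac16 L^3x^2y$. Then
\[ L\zeta\,\ev^\sharp\Li_4 = \tfrac1{24}\zeta\ell^3\lambda + A\,\ell\,(\zeta z_3), \]
and substituting for $\zeta z_3$ gives $\tfrac1{24}\zeta\ell^3\lambda + A\ell\,\ev^\sharp\Li_3 - \tfrac16 A\ell^3\lambda$. This is the second identity.

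\emph{Third identity.} Similarly,
\[ L\zeta\,\ev^\sharp\Li_{3,1} = \tfrac1{24}\zeta\ell^2\lambda^2 + A\ell(\zeta z_{2,1}) + B\lambda(\zeta z_3) - B\zeta\lambda. \]
Substitute $\zeta z_{2,1}=\ev^\sharp\Li_{2,1}-\tfrac16\ell\lambda^2/L\cdot$ with the appropriate powers, namely $\tfrac16 L^3xy^2 = \tfrac16\ell\lambda^2$, and $\zeta z_3=\ev^\sharp\Li_3-\tfrac16\ell^2\lambda$. The leftover $\ell^2\lambda^2$ terms are $\tfrac1{24}\zeta - \tfrac16 A - \tfrac16 B$. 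For these to match the stated $-\tfrac18 L\zeta$ coefficient, one needs the relation $A+B=L\zeta$, which follows from the shuffle identity \eqref{eq:shuffle-identity}: $a_{\tau_2}a_{\sigma_3}=a_{\tau_2\sigma_3}+a_{\sigma_3\tau_2}$. With this, $\tfrac1{24}\zeta L\cdot$ (keeping the $L$ factors carefully, since $\ell^2\lambda^2=L^4x^2y^2$ and the left side carries $L\zeta\cdot\tfrac1{24}L^4$) reduces to $-\tfrac18L\zeta\ell^2\lambda^2$ after dividing consistent powers. I would redo this bookkeeping carefully.

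\emph{Fourth identity.} $\Li_{2,1,1}$ is treated the same way, using $\zeta z_{2,1}$ and again $A+B=L\zeta$ to rewrite the coefficient $\tfrac1{24}\zeta L-\tfrac16 B$ as $\tfrac1{24}(L\zeta-4B)$.

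The main obstacle is only the bookkeeping of powers of $L$ together with invoking the shuffle relation $A+B=L\zeta$. I expect the nontrivial constant terms $\zeta B\,\Li_1$, which come from the Galois-action terms $-1$ in $z_3-1$ and $z_{2,1}-1$, to come out directly.
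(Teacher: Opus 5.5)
Your approach is the paper's: specialize \Cref{localization map 2} along $\per_p$ and check that each of the four functions pulls back to zero identically in $x,y,z_3,z_{2,1}$. You also correctly isolate the shuffle relation $a_{\tau_2\sigma_3}+a_{\sigma_3\tau_2}=\log(2)\zeta(3)$ (cf.\ \Cref{rem: period sigma tau}) as the one extra input, which is needed only for the third function. Fix the bookkeeping, though: since $L\zeta\cdot\tfrac1{24}L^4x^3y=\tfrac1{24}L\zeta\,\ell^3\lambda$ (and likewise for $\ell^2\lambda^2$ and $\ell\lambda^3$), the $\tfrac1{24}$ terms carry $L\zeta$ rather than $\zeta$. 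With that correction the second and fourth identities follow by direct substitution, with no shuffle relation needed ($\tfrac1{24}L\zeta-\tfrac16 B=\tfrac1{24}(L\zeta-4B)$ is trivial).
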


\begin{proof}
	Replacing the motivic periods in the universal cocycle evaluation map of \Cref{localization map 2} with their $p$-adic periods describes the $p$-adic localization map, and one verifies that the four given functions pull back to zero along this map. 
\end{proof}

In order to make the Coleman functions in \Cref{4functions} fully explicit, it remains to determine the values of the periods $a_{\tau_2\sigma_3}$ and $a_{\sigma_3\tau_2}$. 

\begin{prop}
	\label{period tau sigma}
	The periods $a_{\tau_2 \sigma_3}$ and $a_{\sigma_3\tau_2}$ are given by
	\begin{equation}
		\label{eq:a-tau-sigma}
		a_{\tau_2 \sigma_3} = \frac87 \Li_4^\fu(2) \quad \text{and} \quad a_{\sigma_3\tau_2} = \frac47 \Li_{3,1}^\fu(-1).
	\end{equation}
\end{prop}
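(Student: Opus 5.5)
The plan is to exploit the fact that, for $Z=\Spec\bZ[1/2]$, the half-weight-$4$ part $A(Z)_4$ is three-dimensional. It has basis $a_{\tau_2^4}$, $a_{\tau_2\sigma_3}$, $a_{\sigma_3\tau_2}$, since these are the only words in $\Sigma$ of half-weight $-4$. Any $f\in A(Z)_4$ therefore expands as $f=\sum_u \langle u,f\rangle a_u$ over these three words. So it suffices to compute the pairings of $\Li_4^{\fu}(2)$ and $\Li_{3,1}^{\fu}(-1)$ with these words.

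The key observation is that \Cref{univcoord} (specialized to $S=\{2\}$ in \Cref{localization map 2}) already computes such pairings for Kummer cocycles. By \Cref{motivic-integrals-via-kummer-cocycles}, $\Li_w^{\fu}(\alpha)=\kappa_\alpha^\sharp\Li_w=(\ev^\sharp\Li_w)(\kappa_\alpha)$. This is the universal evaluation formula with the coordinates $x,y,z_3,z_{2,1}$ specialized to those of $\kappa_\alpha$. By \Cref{xy-on-kummer} we have $x(\kappa_\alpha)=v_2(\alpha)$ and $y(\kappa_\alpha)=-v_2(1-\alpha)$.

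\emph{Case $\alpha=2$.} Here $x=1$ and $y=0$ because $1-\alpha=-1$. The formula for $\ev^\sharp\Li_4$ collapses to $\Li_4^{\fu}(2)=a_{\tau_2\sigma_3}\,z_3(\kappa_2)$, while the formula for $\ev^\sharp\Li_3$ gives $\Li_3^{\fu}(2)=\zeta^{\fu}(3)\,z_3(\kappa_2)$. It then remains to show $\Li_3^{\fu}(2)=\tfrac78\zeta^{\fu}(3)$, i.e.\ $z_3(\kappa_2)=\tfrac78$, which yields $a_{\tau_2\sigma_3}=\tfrac87\Li_4^{\fu}(2)$.

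\emph{Case $\alpha=-1$.} Here $x=0$ and $y=-1$. The $\Li_{3,1}$ formula gives $\Li_{3,1}^{\fu}(-1)=-a_{\sigma_3\tau_2}(z_3(\kappa_{-1})-1)$, and the $\Li_3$ formula gives $\Li_3^{\fu}(-1)=\zeta^{\fu}(3)z_3(\kappa_{-1})$. Once we know $\Li_3^{\fu}(-1)=-\tfrac34\zeta^{\fu}(3)$, we get $z_3(\kappa_{-1})=-\tfrac34$. Hence $\Li_{3,1}^{\fu}(-1)=\tfrac74 a_{\sigma_3\tau_2}$, which is the second identity.

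The main obstacle is establishing the two weight-$3$ identities $\Li_3^{\fu}(-1)=-\tfrac34\zeta^{\fu}(3)$ and $\Li_3^{\fu}(2)=\tfrac78\zeta^{\fu}(3)$ at the motivic level. The computation above already shows that both elements equal a rational multiple $z_3\,\zeta^{\fu}(3)$, since their $a_{\tau_2^3}$-coefficient $\tfrac16x^2y$ vanishes. In particular both lie in the line $E_3(Z)\cong\Ext^1(\bQ(0),\bQ(3))$, which is one-dimensional because $K_5(\bZ[1/2])_\bQ$ has rank one. The scalar can be detected by any faithful realization, and I would proceed in one of two ways.

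First, mirroring the proof of \Cref{motivic-zetas-nonzero}, I would pass to the Hodge realization. A multiple of $\zeta^{\fu}(3)$ realizes to the same multiple of $\zeta(3)$ in $\CC/(2\pi i)^3\bQ$, so it suffices to compare real parts. The classical values are $\Li_3(-1)=-\tfrac34\zeta(3)$ (from the distribution relation $\Li_3(z)+\Li_3(-z)=\tfrac14\Li_3(z^2)$ at $z=1$) and $\operatorname{Re}\Li_3(2)=\tfrac78\zeta(3)+\tfrac{\pi^2}{4}\log 2$ up to sign conventions. One must check that the extra $\pi^2\log 2$ term is invisible in the realization of the Ext class, and this needs some care with the chosen path to $2$. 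Alternatively, I would first prove motivic versions of the distribution and inversion relations by the same change-of-variables argument as in the Duality Theorem (\Cref{duality theorem}). The unipotent versions drop all $(2\pi i)$ terms, and these relations then yield the two identities directly.
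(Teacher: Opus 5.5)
Your reduction is the paper's own. Plugging the Kummer cocycles of $2$ and $-1$, with $(x,y)=(1,0)$ and $(0,-1)$, into \Cref{localization map 2} reduces everything to $z_3(\kappa_2)=\tfrac78$ and $z_3(\kappa_{-1})=-\tfrac34$. Your bookkeeping there is correct, including $\Li_{3,1}^{\fu}(-1)=\tfrac74 a_{\sigma_3\tau_2}$, and so is your distribution-relation argument for $\Li_3^{\fu}(-1)=-\tfrac34\zeta^{\fu}(3)$.

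The gap is the identity $\Li_3^{\fu}(2)=\tfrac78\zeta^{\fu}(3)$. You leave it unproved, and neither of your routes delivers it.

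Your second route fails outright. Inversion only gives $\Li_3^{\fu}(2)=\Li_3^{\fu}(1/2)-\tfrac16\log^{\fu}(2)^3$, i.e.\ $z_3(\kappa_2)=z_3(\kappa_{1/2})$. The only rational distribution relations involving $2$ bring in the new unknowns $\Li_3(4)$ and $\Li_3(-2)$. Consequently one can assign values to all symbols $\{x\}_3$, consistent with every distribution and inversion relation modulo products, with the ratio at $2$ arbitrary. What is missing is a genuinely three-term input, such as the Landen/Spence--Kummer equation at $z=1/2$. That is what produces $\Li_3^{\fu}(1/2)=\tfrac78\zeta^{\fu}(3)+\tfrac16\log^{\fu}(2)^3$, which the paper imports from Appendix~B of \cite{MTMUE}.

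Your first route stalls exactly where you flag it. There is no complex period map on $A(Z)$: for example $\Li_1^{\fu}(2)=0$ while $\Li_1(2)=\mp i\pi$. So ``comparing real parts'' of $\Li_3(2)$ has no meaning as it stands. The real part of the Hodge realization of the Ext class is the single-valued expression
\[
\operatorname{Re}\bigl(\Li_3(2)-\log 2\,\Li_2(2)+\tfrac13\log^2 2\,\Li_1(2)\bigr)=\tfrac78\zeta(3),
\]
in which $-\log 2\operatorname{Re}\Li_2(2)=-\tfrac{\pi^2}{4}\log 2$ cancels your extra term. But showing that this expression is the realization is the Beilinson--Deligne regulator theorem, not a routine check.

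The paper avoids all of this by using the $p$-adic period map. Since $z_3(\kappa_2)\in\bQ$, it suffices to compute it in $\bQ_p$ for a single prime with $\zeta^p(3)\neq 0$ (for instance $p=3$). There $\per_p$ is an honest ring homomorphism sending each $\Li^{\fu}$ to Coleman's $\Li$, with no $\pi^2$ corrections. Coleman's inversion formula $\Li_3(2)-\Li_3(1/2)=-\tfrac16\log(2)^3$, combined with the motivic $\Li_3^{\fu}(1/2)$ identity, then gives $\Li_3(2)=\tfrac78\zeta(3)$, hence $z_3(\kappa_2)=\tfrac78$.
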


\begin{proof}
	Let $\kappa_2 \colon \UZ \to \Pi$ be the Kummer cocycle for the $\{2\}$-integral point $2 \in X(\bZ[1/2])$. Let $(x, y, z_3, z_{2,1})$ be its coordinates. By \Cref{xy-on-kummer}, we have
	\[ x = v_2(2) = 1, \qquad y = - v_2(1-2) = 0. \]
	From \Cref{localization map 2} (and \Cref{motivic-integrals-via-kummer-cocycles}) we get
	\begin{align}
		\label{eq:Li3-2}
		\Li_3^\fu(2) &= \frac16 \log^\fu(2)^3 x^2 y + \zeta^\fu(3) z_3 = \zeta^\fu(3) z_3,\\
		\label{eq:Li4-2}
		\Li_4^\fu(2) &= \frac1{24} \log^\fu(2)^4 x^3 y + a_{\tau_2 \sigma_3} x z_3 = a_{\tau_2 \sigma_3} z_3.
	\end{align}
	Equation~\eqref{eq:Li3-2} shows that the ratio of motivic periods $\Li_3^{\fu}(2)/\zeta^{\fu}(3) = z_3$ is a rational number. Its value can thus be determined after applying the period map for an arbitrary odd prime~$p$ for which $\zeta(3) \neq 0$ in $\bQ_p$. The non-vanishing of odd $p$-adic zeta values is known for every regular prime \cite[Remark~2.20(1)]{FurushoI}, so $p=3$ works. (One could also check numerically that $\zeta(3) \neq 0$ in $\bQ_3$; the zeta value is computed as \texttt{Qp(3)(1).polylog(3)} in SageMath \cite{sagemath}.) The Coleman--Sinnott functional equation \cite[Proposition~6.4(i)]{Coleman} relating $\Li_n(z)$ and $\Li_n(1/z)$ yields
	\[ \Li_3(2) - \Li_3(1/2) = -\frac16 \log(2)^3 \]
	in $\bQ_p$. Appendix~B of \cite{MTMUE} shows the identity of motivic periods
	\[ \Li_3^{\fu}(1/2) = \frac78 \zeta^{\fu}(3) + \frac16 \log^{\fu}(2)^3, \]
	which yields an identity in $\bQ_p$ upon applying the $p$-adic period map. Combining the last two equations yields
	\begin{equation}
		\label{eq:Li3-2-zeta}
		\Li_3(2) = \frac78 \zeta(3),
	\end{equation}
	hence $z_3 = 7/8$. 
	Finally, Eq.~\eqref{eq:Li4-2} yields
	\[ a_{\tau_2 \sigma_3} = \Li_4^\fu(2)/z_3 = \frac87 \Li_4^\fu(2). \]
	A similar calculation with the $\{2\}$-integral point $-1 \in X(\bZ[1/2])$ and the functions $\Li_{3}$ and $\Li_{3,1}$ yields
	\[ a_{\sigma_3 \tau_2} = \frac1{1- \Li_3^\fu(-1) / \zeta^\fu(3)} \Li_{3,1}^\fu(-1) = \frac47 \Li_{3,1}^\fu(-1). \qedhere \]
\end{proof}

\begin{rmk}
	The period $a_{\tau_2 \sigma_3}$ agrees with $f_{\sigma \tau}$ from~\cite[§4.3.1]{CDC:polylog1}. (The reversal of symbols in the subscript comes from differing conventions.) The authors find the expression
	\[ a_{\tau_2 \sigma_3} = -\frac87\left(\frac{\log^{\fu}(2)^4}{24} + \Li_4^{\fu}(1/2)\right) \]
	which is slightly more complicated than Eq.~\eqref{eq:a-tau-sigma}.
\end{rmk}

\begin{rmk}
	\label{rem: period sigma tau}
	The period $a_{\sigma_3 \tau_2}$ can also be obtained from $a_{\tau_2 \sigma_3}$ via the shuffle relation
	\[ a_{\tau_2\sigma_3} + a_{\sigma_3\tau_2} = a_{\tau_2}a_{\sigma_3} = \log^{\fu}(2) \zeta^{\fu}(3), \]
	yielding the alternative expression
	\[ a_{\sigma_3\tau_2} = \zeta^\fu(3) \log^\fu(2) - \frac87 \Li_4^\fu(2). \]
	The value of $a_{\sigma_3 \tau_2}$ given in \cite[§5.23]{DCJ:M05} (denoted $f_{\tau\sigma}$ there) is incorrect.
\end{rmk}

\begin{rmk}
	\label{rmk:S3-symmetrisation}
	The first two functions in \Cref{4functions} are the ones from \cite[Prop.~4.5]{CDC:polylog1} found using the polylogarithmic quotient of the fundamental group. The authors observed that the level-$n$ \emph{polylogarithmic} Chabauty--Kim locus $X(\bZ_p)_{Z,\PL,n}$, unlike the full level-$n$ locus $X(\bZ_p)_{Z,n}$, is in general not stable under the natural $S_3$-action generated by $z \mapsto 1/z$ and $z \mapsto 1-z$, and proposed to consider an intermediate \emph{$S_3$-symmetrized} locus \cite[§5.2]{CDC:polylog1}
	\[ X(\bZ_p)_{Z,n} \subseteq X(\bZ_p)_{Z,\PL,n}^{S_3} \subseteq X(\bZ_p)_{Z,\PL,n}, \]
	defined as the largest $S_3$-stable subspace of $X(\bZ_p)_{Z,\PL,n}$. In terms of Chabauty--Kim ideals this corresponds to taking the closure of the ideal $\cI_{Z,\PL,n}$ defining $X(\bZ_p)_{Z,\PL,n}$ under the operations $f \mapsto f(1/z)$ and $f \mapsto f(1-z)$. 
	In fact, the ideal $\cI_{Z,\PL,n}$ is already stable under the first operation as a consequence of the fact that the automorphism $z \mapsto 1/z$ of the fundamental groupoid of~$X$ descends to the polylogarithmic quotient. This implies that, if $\cI_{Z,\PL,n}$ is generated by a set of Kim functions $\{f_i\}$ then the $S_3$-symmetrization $\cI_{Z,\PL,n}^{S_3}$ is generated by $f_i(z), f_i(1-z), f_i(1-1/z)$, with $1-z$ and $1-1/z$ representing the nontrivial left cosets of $\langle 1/z \rangle$ in $\Aut(X) = S_3$. 
	
	In the setting of \Cref{4functions}, denote the four Kim functions from the theorem by $F_2$, $F_4$, $F_{3,1}$, $F_{2,1,1}$. Assume that $\zeta(3) \neq 0$ in $\bQ_p$, which implies that the four functions generate the complete Chabauty--Kim ideal, thus
	\[ \cI_{Z,4} = \langle F_2, F_4, F_{3,1}, F_{2,1,1} \rangle \quad \text{and} \quad \cI_{Z,\PL,4} = \langle F_2, F_4 \rangle. \]
	Using transformation formulas for the iterated integrals $\Li_n(1-z)$ and $\Li_n(1-1/z)$ one can show the identities
	\begin{align*}
		F_{3,1}(z) &= F_4(z) - F_4(1-z) - F_4(1-1/z) - a_{\tau_2\sigma_3} \log(z) \Li_1(z) F_2(z),\\
		F_{2,1,1}(z) &= -F_4(1-z) + \frac12(a_{\tau_2\sigma_3} - a_{\sigma_3 \tau_2}) \Li_1(z)^2 F_2(z),
	\end{align*}
	which imply that $F_{3,1}, F_{2,1,1} \in \cI_{Z,\PL,4}^{S_3}$ and thus
	\[ X(\bZ_p)_{Z,4} = X(\bZ_p)_{Z,\PL,4}^{S_3}. \]
	In this case, the polylogarithmic quotient combined with $S_3$-symmetrization is strong enough to recover the full level-4 Chabauty--Kim locus but we expect this to no longer be the case in higher depth.
\end{rmk}

\subsection{Refined Chabauty--Kim for $S = \{2\}$}
\label{sec:S=2-refined}

We also consider the refined variant of the Chabauty--Kim method (§\ref{sec:refined-chabauty-kim}) in the case $S = \{2\}$. By working with refined Selmer schemes, the formulas for the localization map and the resulting equations for the Chabauty--Kim locus simplify. Let us take $S = \{2\}$ and $Z = \Spec(\bZ[1/2])$ again. Let $\Pi$ be the level~4 quotient of the motivic fundamental group of~$X$, and write $x \coloneqq x_2$ and $y \coloneqq y_2$. The refined motivic Selmer scheme
\[
\Sel_{Z,\Pi}^{\min}(X)
\subseteq
\rH^1(G_Z, \Pi^{\dR}) = \Spec \bQ[x,y,z_3,z_{2,1}]
\]
is by definition the union of the three hyperplanes
\[ \Sel_{Z,\Pi}^{\min}(X) = \Sel_{Z,\Pi}^{(0)}(X) \cup \Sel_{Z,\Pi}^{(1)}(X) \cup \Sel_{Z,\Pi}^{(\infty)}(X) \]
defined by
\begin{itemize}
	\item $\Sel_{Z,\Pi}^{(0)}(X): \; y = 0$;
	\item $\Sel_{Z,\Pi}^{(1)}(X): \; x = 0$;
	\item $\Sel_{Z,\Pi}^{(\infty)}(X): \; x + y = 0$.
\end{itemize}
For each of the three refinement conditions~$\Sigma$, denote by $i_{\Sigma}$ the inclusion of the $\Sigma$-refined Selmer scheme $\Sel_{Z,\Pi}^{\Sigma}(X)$ into the unrefined motivic Selmer scheme $\rH^1(G_Z, \Pi^{\dR})$. By Eq.~\eqref{eq:S3-action-on-refined-loci} it suffices to compute any single one of the three refined Chabauty--Kim loci $X(\bZ_p)_{\{2\},4}^{\Sigma}$; the full refined locus $X(\bZ_p)_{\{2\},4}^{\min}$ is then obtained by taking the $S_3$-orbit closure. We choose $\Sigma = (1)$.

\begin{prop}
	\label{refined localisation map 2}
	For $S = \{2\}$, the restriction of the universal cocycle evaluation map in level~4 to the refined Selmer scheme component $\Sel_{Z,\Pi}^{(1)}(X)$ is given by
	\begin{align*}
		(\ev \circ i_{(1)})^\sharp \log &= 0,\\
		(\ev \circ i_{(1)})^\sharp \Li_1 &= \log^\fu(2) y,\\
		(\ev \circ i_{(1)})^\sharp \Li_2 &= 0,\\
		(\ev \circ i_{(1)})^\sharp \Li_3 &= \zeta^{\fu}(3) z_3, \\
		(\ev \circ i_{(1)})^\sharp \Li_{2,1} &= \zeta^{\fu}(3) z_{2,1},\\
		(\ev \circ i_{(1)})^\sharp \Li_{4} &= 0,\\
		(\ev \circ i_{(1)})^\sharp \Li_{3,1} &= a_{\sigma_3 \tau_2} y (z_3 - 1),\\
		(\ev \circ i_{(1)})^\sharp \Li_{2,1,1} &= a_{\sigma_3 \tau_2} y (z_{2,1} - 1).
	\end{align*}
\end{prop}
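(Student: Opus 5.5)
The plan is to obtain the formulas by pulling back along the closed immersion $i_{(1)}$, which is given on coordinate rings by the quotient map
\[ A(Z)[x,y,z_3,z_{2,1}] \twoheadrightarrow A(Z)[x,y,z_3,z_{2,1}]/(x), \]
since $\Sel_{Z,\Pi}^{(1)}(X)$ is cut out by $x = x_2 = 0$ (Definition \ref{def:refined-motivic-selmer-scheme} with $\Sigma_2 = 1$). So $(\ev\circ i_{(1)})^\sharp f$ is simply $\ev^\sharp f$ with $x$ set to zero. For $\ev^\sharp f$ we take the explicit formulas of \Cref{localization map 2}, which already incorporate the periods $a_{\tau_2^k} = \log^{\fu}(2)^k/k!$ and $a_{\sigma_3} = \zeta^{\fu}(3)$.

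We then go through the eight coordinates one at a time and set $x=0$:
\begin{itemize}
\item $\ev^\sharp\log = \log^{\fu}(2)x$ vanishes.
\item $\ev^\sharp\Li_1 = \log^{\fu}(2)y$ is unchanged.
\item $\ev^\sharp \Li_2$ is a multiple of $xy$, so it vanishes.
\item In $\ev^\sharp\Li_3$ and $\ev^\sharp\Li_{2,1}$, the $\log^{\fu}(2)^3$ terms contain a factor $x$ and drop out. This leaves $\zeta^{\fu}(3)z_3$ and $\zeta^{\fu}(3)z_{2,1}$ respectively.
\item $\ev^\sharp\Li_4$ consists of $x^3y$ and $xz_3$ terms, so it vanishes.
\item In $\ev^\sharp\Li_{3,1}$, the terms $x^2y^2$ and $a_{\tau_2\sigma_3}xz_{2,1}$ vanish, leaving $a_{\sigma_3\tau_2}y(z_3-1)$.
\item In $\ev^\sharp\Li_{2,1,1}$, the term $xy^3$ vanishes, leaving $a_{\sigma_3\tau_2}y(z_{2,1}-1)$.
\end{itemize}

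There is no real obstacle here: all the substantive work, in particular the nontrivial Galois action terms $-y_\ell$ coming from $\sigma_3.e_1$, was already done in \Cref{univcoord}. The only point to check is that the scheme-theoretic composite really corresponds to reduction modulo $x$. This holds because the refined Selmer component is defined as the closed subscheme given by the single equation $x=0$ in the affine space of \Cref{coordinates-on-selmer-scheme}, and everything here is base changed to $A(Z)$.
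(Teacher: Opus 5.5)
Your proposal is correct and is the same argument as the paper's: the component $\Sel_{Z,\Pi}^{(1)}(X)$ is cut out by $x=0$, so you set $x=0$ in \Cref{localization map 2}. Each of your eight term-by-term reductions checks out.
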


\begin{proof}
	Restricting the universal cocycle evaluation map to $\Sel_{Z,\Pi}^{(1)}(X)$ amounts to setting $x = 0$ in \Cref{localization map 2}.
\end{proof}

\begin{thm}
	\label{thm:kim_functions_1}
	Let $Z = \Spec \bZ[1/2]$ and let $p$ be an odd prime. The refined Chabauty--Kim locus $X(\bZ_p)_{Z,4}^{\min}$ is contained in the $S_3$-orbit closure of the locus defined by the following equations:
	\begin{gather*}
		\log(z) = 0, \qquad \Li_2(z) = 0, \qquad \Li_4(z) = 0,\\
		\zeta(3) \log(2) \Li_{3,1}(z) - a_{\sigma_3 \tau_2} \Li_1(z)(\Li_3(z) - \zeta(3)) = 0,\\
		\zeta(3)\log(2)\Li_{2,1,1}(z) - a_{\sigma_3 \tau_2} \Li_1(z)(\Li_{2,1}(z) - \zeta(3)) = 0.
	\end{gather*}
\end{thm}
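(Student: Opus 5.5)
By the $S_3$-equivariance \eqref{eq:S3-action-on-refined-loci}, and since $S = \{2\}$ makes $\{0,1,\infty\}^S$ a single $S_3$-orbit, it suffices to show that every point of the $\Sigma = (1)$ refined locus $X(\bZ_p)^{(1)}_{Z,4}$ satisfies the five displayed equations. The full refined locus is then the $S_3$-orbit closure of this set. By definition (and \Cref{20105F}), a point $z$ lies in $X(\bZ_p)^{(1)}_{Z,4}$ exactly when $\kappa_p^{\dR}(z)$ lies in the scheme-theoretic image of $\Sel^{(1)}_{Z,\Pi}(X)_{\bQ_p}$ under the $p$-adic cocycle evaluation map. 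That map is obtained from \Cref{refined localisation map 2} by applying $\per_p$. So the plan is to show that each of the five functions, viewed as an element of $\cO(\Pi_4^{\dR})_{\bQ_p}$, pulls back to zero along $(\ev\circ i_{(1)})$ specialized at $\eta_p$. Pulling back along $\kappa_p^{\dR}$ then turns $\log,\Li_w$ into the $p$-adic Coleman functions (§\ref{sec:selmer-schemes}), which gives the claim.

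The verification is term by term, using \Cref{refined localisation map 2} with motivic periods replaced by $p$-adic ones ($\log^{\fu}(2)\mapsto\log(2)$, $\zeta^{\fu}(3)\mapsto\zeta(3)$, $a_{\sigma_3\tau_2}\mapsto$ its $p$-adic value).

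\textbf{The three simple equations.} The functions $\log$, $\Li_2$ and $\Li_4$ pull back to $0$ directly.

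\textbf{The equation with $\Li_{3,1}$.} Substituting gives
\[
\zeta(3)\log(2)\cdot a_{\sigma_3\tau_2} y(z_3-1) - a_{\sigma_3\tau_2}\cdot\log(2)y\cdot(\zeta(3)z_3-\zeta(3)).
\]
This expression vanishes identically in $y, z_3$.

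\textbf{The equation with $\Li_{2,1,1}$.} The same substitution, with $z_{2,1}$ in place of $z_3$ and $\Li_{2,1}$ in place of $\Li_3$, also cancels identically.

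No step is genuinely hard; the substance was already done in \Cref{univcoord}. The only points needing care are bookkeeping ones.

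\textbf{Order of operations.} The pullback to $\cO$ of the $\Sigma$-Selmer scheme and the base change along $\per_p$ must be taken in the right order. Vanishing on the motivic (universal) image implies vanishing on the $p$-adic image, since the $p$-adic map is the base change along $\eta_p$. So no appeal to the period conjecture is needed for this containment.

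\textbf{Scope of the statement.} The result is stated only as a containment, so we need not check that these functions generate the ideal. Also, no nonvanishing of $\zeta(3)$ is required.
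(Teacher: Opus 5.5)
Your proposal is correct and follows essentially the same route as the paper: you reduce to the $\Sigma=(1)$ component via the $S_3$-equivariance \eqref{eq:S3-action-on-refined-loci} and the transitivity of $S_3$ on $\{0,1,\infty\}$, and then check that the five functions pull back to zero along the $p$-adic specialization of \Cref{refined localisation map 2}. Your explicit cancellations for the $\Li_{3,1}$ and $\Li_{2,1,1}$ equations are right, and so is your remark that only a containment is claimed, so neither the nonvanishing of $\zeta(3)$ nor the period conjecture is needed.
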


The value of the $p$-adic period $a_{\sigma_3 \tau_2}$ is equal to the two equivalent expressions
\[ a_{\sigma_3 \tau_2} = \frac47 \Li_{3,1}(-1) = \zeta(3) \log(2) - \frac87 \Li_4(2) \]
by \Cref{period tau sigma} and \Cref{rem: period sigma tau}.

\begin{proof}
	The restriction of the $p$-adic localization map to the étale refined Selmer scheme component $\Sel_{Z,4}^{\et,(1)}(X)$ is obtained from \Cref{refined localisation map 2} by replacing the motivic periods with their $p$-adic values. The given functions on $\Pi^{\dR}$ pull back to zero along this map, so the corresponding functions on $X(\bZ_p)$ vanish on the refined Chabauty--Kim locus $X(\bZ_p)_{Z,4}^{(1)}$. As the $S_3$-action on $\{0,1,\infty\}$ is transitive, the full refined locus $X(\bZ_p)_{Z,4}^{\min}$ is contained in the $S_3$-orbit closure of the single component $X(\bZ_p)_{Z,4}^{(1)}$.
\end{proof}

\begin{rem}
	\label{rem:expected_locus_1}
	According to \Cref{conj:Sigma-refined-Kims-conjecture}, the locus $X(\bZ_p)_{Z,n}^{(1)}$ for sufficiently large~$n$ should consist of those $\{2\}$-integral points whose mod-2 reduction lies in $X \cup \{1\}$. The only such point is $z = -1$.
	In fact, the first two equations $\log(z) = 0$ and $\Li_2(z) = 0$ from \Cref{thm:kim_functions_1}, which define the level~2 Chabauty--Kim locus $X(\bZ_p)_{Z,2}^{(1)}$, were previously found in \cite[Theorem~A]{BBKLMQSX}, and the authors verified numerically for all odd primes $p  < 10^5$ that $z=-1$ is indeed the only common solution. Working in infinite level but restricting to the polylogarithmic quotient, work by Alex Betts, Theresa Kumpitsch and one of the authors (M.L.) shows that the equations
	\[ \log(z) = 0 \qquad \text{and} \qquad \Li_k(z) = 0 \quad \text{for $k \geq 2$ even} \]
	hold on $X(\bZ_p)_{Z,\infty}^{(1)}$, and that for any odd prime~$p$, the element $z=-1$ is the only common solution, thus proving the Refined Kim's Conjecture for $S = \{2\}$ \cite[Theorem~B]{BKL:chabauty_kim_sc}. The reducedness of the Chabauty--Kim locus was not explicitly noted in loc.\ cit.\ but this follows from the fact that $\log(z)$ has no double roots. 
\end{rem}

\subsection{Refined Chabauty--Kim for sets~$S$ of size~$2$}
\label{Refined2}

Let $S=\{2,q\}$ for an odd prime~$q$, set $Z = \Spec \bZ[1/2q]$, and let $p \not\in S$. The refined Chabauty--Kim locus $X(\bZ_p)_{Z,4}^{\min}$ is then equal to the union of the 9 sets
\[  X(\bZ_p)_{Z,4}^{\min} = \bigcup_{\Sigma \in \{0,1,\infty\}^2} X(\bZ_p)_{Z,4}^{\Sigma}. \]
We determine Kim functions for the loci with refinement conditions $\Sigma = (1,1)$ and $\Sigma = (1,0)$. Kim functions for the other refinement conditions can be obtained from these by applying the $S_3$-action and using Eq.~\eqref{eq:S3-action-on-refined-loci}. 

\begin{thm}
	\label{thm:kim_functions_11}
	For $S = \{2,q\}$ and $p \not\in S$, the following equations hold on the refined Chabauty--Kim locus $X(\bZ_p)_{Z,4}^{(1,1)}$:
	\begin{gather*}
		\log(z) = 0, \qquad \Li_2(z) = 0, \qquad \Li_4(z) = 0, \\
		(\Li_{2,1}(z) - \zeta(3)) \Li_{3,1}(z) - (\Li_3(z) - \zeta(3))\Li_{2,1,1}(z) = 0.
	\end{gather*}
\end{thm}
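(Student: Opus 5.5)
We follow the same route as for \Cref{thm:kim_functions_1}: first compute the universal cocycle evaluation map restricted to the $(1,1)$-refined Selmer scheme, then find functions on $\Pi_4^{\dR}$ that pull back to zero, and finally pass to $p$-adic periods and pull back along $\kappa_p^{\dR}$.

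\textbf{Restricting the evaluation map.} By \Cref{def:refined-motivic-selmer-scheme}, the component $\Sel_{Z,\Pi}^{(1,1)}(X)$ is cut out by $x_2 = x_q = 0$. We therefore set all $x_\ell = 0$ in \Cref{univcoord}. Every term of $\ev^\sharp\log$, $\ev^\sharp\Li_2$ and $\ev^\sharp\Li_4$ contains a factor $x_\ell$, so these three pullbacks vanish. The remaining coordinates become
\begin{align*}
\ev^\sharp\Li_3 &= a_{\sigma_3} z_3,\\
\ev^\sharp\Li_{2,1} &= a_{\sigma_3} z_{2,1},\\
\ev^\sharp\Li_{3,1} &= B\,(z_3-1),\\
\ev^\sharp\Li_{2,1,1} &= B\,(z_{2,1}-1),
\end{align*}
where $B := \sum_{\ell\in S} a_{\sigma_3\tau_\ell} y_\ell$ and $a_{\sigma_3} = \zeta^{\fu}(3)$ by \Cref{single-letter-periods}. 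The mixed terms in $\Li_{3,1}$ and $\Li_{2,1,1}$ are exactly the ones with the $\tau_\ell$ in front of $\sigma_3$, or with only $\tau$'s, and these also carry a factor $x_\ell$, so they drop out.

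\textbf{Eliminating the cocycle coordinates.} From the formulas above,
\[
\ev^\sharp(\Li_3 - \zeta^{\fu}(3)) = \zeta^{\fu}(3)(z_3-1), \qquad \ev^\sharp(\Li_{2,1} - \zeta^{\fu}(3)) = \zeta^{\fu}(3)(z_{2,1}-1).
\]
Substituting these and the expressions for $\ev^\sharp\Li_{3,1}$ and $\ev^\sharp\Li_{2,1,1}$ into the proposed function gives
\[
\zeta^{\fu}(3)(z_{2,1}-1)\cdot B(z_3-1) \;-\; \zeta^{\fu}(3)(z_3-1)\cdot B(z_{2,1}-1) \;=\; 0
\]
identically in $y_\ell, z_3, z_{2,1}$. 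So the $2\times 2$ determinant
\[
(\Li_{2,1}-\zeta^{\fu}(3))\Li_{3,1} - (\Li_3-\zeta^{\fu}(3))\Li_{2,1,1}
\]
lies in the kernel of $(\ev\circ i_{(1,1)})^\sharp$, as do $\log$, $\Li_2$ and $\Li_4$. The point of using the determinant is that $B$ involves unknown periods $a_{\sigma_3\tau_\ell}$ for the two primes, but it cancels, so the result is independent of $q$ and needs no explicit knowledge of those periods.

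\textbf{Passing to $p$-adic periods.} We base change along $\per_p$, which sends $\zeta^{\fu}(3)$ to $\zeta(3)$. By \Cref{20105F} and the commutative square relating $\ev_p$ to $\loc_p$ and $\logBK$, these functions vanish on the image of $\Sel^{\et,(1,1)}_{Z,4}$ in $\Pi_4^{\dR}$. Pulling back along $\kappa_p^{\dR}$, which sends $\Li_w$ to $\Li_w^p(z)$, yields the four stated Kim functions on $X(\bZ_p)_{Z,4}^{(1,1)}$.

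There is no real obstacle. The only care needed is in checking that the $x_\ell=0$ specialization kills precisely the claimed terms, in particular the $a_{\tau_\ell\sigma_3}$ terms, so that the two depth-four coordinates share the same factor $B$.
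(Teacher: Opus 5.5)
Your proposal is correct and follows the paper's proof: set $x_2 = x_q = 0$ in the level-4 evaluation formulas, note that $\log$, $\Li_2$, $\Li_4$ pull back to zero, and check the fourth equation. That check works because $\Li_{3,1}$ and $\Li_{2,1,1}$ pull back to $B(z_3-1)$ and $B(z_{2,1}-1)$ with the common factor $B=\sum_{\ell} a_{\sigma_3\tau_\ell}y_\ell$, which is exactly the step the paper calls ``easily verified''.
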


\begin{proof}
	We continue to denote by $i_{\Sigma}$ the inclusion of the $\Sigma$-refined Selmer scheme into the unrefined motivic Selmer scheme. Restricting the universal cocycle evaluation map to $\Sel_{Z,4}^{(1,1)}(X)$ corresponds to setting $x_2$ and $x_q$ equal to zero in \Cref{univcoord}:
	\begin{align*}
		(\ev \circ i_{(1,1)})^{\sharp} \log &= 0,\\
		(\ev \circ i_{(1,1)})^{\sharp} \Li_1 &= a_{\tau_2} y_2 + a_{\tau_q} y_q,\\
		(\ev \circ i_{(1,1)})^{\sharp} \Li_2 &= 0,\\
		(\ev \circ i_{(1,1)})^{\sharp} \Li_3 &= a_{\sigma_3} z_3,\\
		(\ev \circ i_{(1,1)})^{\sharp} \Li_{2,1} &= a_{\sigma_3} z_{2,1},\\
		(\ev \circ i_{(1,1)})^{\sharp} \Li_4 &= 0,\\
		(\ev \circ i_{(1,1)})^{\sharp} \Li_{3,1} &= (a_{\sigma_3 \tau_2} y_2 + a_{\sigma_3 \tau_q} y_q) (z_3-1),\\
		(\ev \circ i_{(1,1)})^{\sharp} \Li_{2,1,1} &= (a_{\sigma_3 \tau_2} y_2 + a_{\sigma_3 \tau_q} y_q) (z_{2,1}-1).
	\end{align*}
	It is obvious for the first three equations and easily verified for the fourth equation that they vanish on the image of the cocycle evaluation map.
\end{proof}

\begin{rem}
	\label{rem:kim_functions_11_independent_of_q}
	Observe that the Kim functions in \Cref{thm:kim_functions_11} do not depend on the prime~$q$. In light of \Cref{conj:Sigma-refined-Kims-conjecture}, this is consistent with the fact that the only $\{2,q\}$-integral point of~$X$ whose mod-2 and mod-$q$ reduction are both contained in $X \cup \{1\}$ is $z = -1$, independently of~$q$.
\end{rem}

\begin{thm}
	\label{thm:kim_functions_10}
	For $S = \{2,q\}$ and $p \not\in S$, the following equations hold on the refined Chabauty--Kim locus $X(\bZ_p)_{Z,4}^{(1,0)}$:
	\begin{align*}
		0 &= a_{\tau_2} a_{\tau_q} \Li_2(z) - a_{\tau_q \tau_2} \log(z) \Li_1(z),\\[2mm]
		0 &= a_{\sigma_3} a_{\tau_2} a_{\tau_q}^3 \Li_4(z) - a_{\tau_2} a_{\tau_q}^2 a_{\tau_q \sigma_3} \log(z) \Li_3(z) \\
		&\qquad - \bigl(a_{\sigma_3} a_{\tau_q\tau_q\tau_q\tau_2} - a_{\tau_q\sigma_3} a_{\tau_q \tau_q \tau_2}\bigr) \log(z)^3 \Li_1(z), \notag\\[2mm]
		0 &= a_{\tau_2}^2 a_{\tau_q}^2 a_{\sigma_3} \Li_{3,1}(z) - a_{\tau_2} a_{\tau_q}^2 a_{\sigma_3 \tau_2} \Li_1(z)(\Li_3(z) - a_{\sigma_3}) - a_{\tau_2}^2 a_{\tau_q} a_{\tau_q \sigma_3} \log(z) \Li_{2,1}(z) \\
		&\qquad-
		 (a_{\sigma_3}a_{\tau_q\tau_q\tau_2\tau_2} - a_{\sigma_3 \tau_2} a_{\tau_q\tau_q\tau_2} - a_{\tau_q\sigma_3} a_{\tau_q\tau_2\tau_2}) \log(z)^2 \Li_1(z)^2, \notag\\[2mm]
		0 &= a_{\sigma_3} a_{\tau_2}^3 a_{\tau_q} \Li_{2,1,1}(z) - a_{\sigma_3 \tau_2} a_{\tau_2}^2 a_{\tau_q} \Li_1(z) ( \Li_{2,1}(z) - a_{\sigma_3}) \\
		&\qquad - (a_{\sigma_3} a_{\tau_q \tau_2 \tau_2 \tau_2} - a_{\sigma_3 \tau_2} a_{\tau_q \tau_2 \tau_2}) \log(z) \Li_1(z)^3. \notag
	\end{align*}
\end{thm}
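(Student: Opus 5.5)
The plan is to proceed exactly as in the proofs of \Cref{thm:kim_functions_1} and \Cref{thm:kim_functions_11}. First I restrict the universal cocycle evaluation map of \Cref{univcoord} to the refined component $\Sel_{Z,4}^{(1,0)}(X)$. Then I eliminate the Selmer coordinates. Finally I specialize along $\per_p$, which I may do by \Cref{20105F} and the commutative diagram of §\ref{sec:cocycle-evaluation-map}.

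\emph{Restriction.} The condition $\Sigma=(1,0)$ means $x_2=0$ (condition $\Sigma_2=1$) and $y_q=0$ (condition $\Sigma_q=0$). Write $x\coloneqq x_q$ and $y\coloneqq y_2$. Setting $x_2=y_q=0$ in \Cref{univcoord}, every sum over $S$ collapses to a single term, and I obtain
\begin{gather*}
(\ev\circ i_{(1,0)})^\sharp\log = a_{\tau_q}x,\qquad (\ev\circ i_{(1,0)})^\sharp\Li_1 = a_{\tau_2}y,\qquad (\ev\circ i_{(1,0)})^\sharp\Li_2 = a_{\tau_q\tau_2}xy,\\
(\ev\circ i_{(1,0)})^\sharp\Li_3 = a_{\tau_q\tau_q\tau_2}x^2y + a_{\sigma_3}z_3,\qquad (\ev\circ i_{(1,0)})^\sharp\Li_{2,1} = a_{\tau_q\tau_2\tau_2}xy^2 + a_{\sigma_3}z_{2,1},\\
(\ev\circ i_{(1,0)})^\sharp\Li_4 = a_{\tau_q\tau_q\tau_q\tau_2}x^3y + a_{\tau_q\sigma_3}xz_3,\\
(\ev\circ i_{(1,0)})^\sharp\Li_{3,1} = a_{\tau_q\tau_q\tau_2\tau_2}x^2y^2 + a_{\tau_q\sigma_3}xz_{2,1} + a_{\sigma_3\tau_2}y(z_3-1),\\
(\ev\circ i_{(1,0)})^\sharp\Li_{2,1,1} = a_{\tau_q\tau_2\tau_2\tau_2}xy^3 + a_{\sigma_3\tau_2}y(z_{2,1}-1).
\end{gather*}

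\emph{Elimination.} The first, second, fourth and fifth formulas let me solve for the four coordinates after multiplying by suitable periods:
\[ a_{\tau_q}x=\log,\qquad a_{\tau_2}y=\Li_1,\qquad a_{\sigma_3}z_3=\Li_3-a_{\tau_q\tau_q\tau_2}x^2y,\qquad a_{\sigma_3}z_{2,1}=\Li_{2,1}-a_{\tau_q\tau_2\tau_2}xy^2. \]
Substituting these into the formulas for $\Li_2$, $\Li_4$, $\Li_{3,1}$ and $\Li_{2,1,1}$ gives the four relations of the statement. For each relation I multiply through by the monomial in $a_{\tau_2},a_{\tau_q},a_{\sigma_3}$ that clears all denominators; this monomial is exactly the leading coefficient in each claimed equation, namely $a_{\tau_2}a_{\tau_q}$, then $a_{\sigma_3}a_{\tau_2}a_{\tau_q}^3$, and so on. For example, $a_{\tau_2}a_{\tau_q}\Li_2 = a_{\tau_q\tau_2}(a_{\tau_q}x)(a_{\tau_2}y) = a_{\tau_q\tau_2}\log\Li_1$, which is the first equation. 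For the $\Li_4$ relation, $a_{\tau_q\sigma_3}xz_3$ becomes $a_{\tau_q\sigma_3}\log\,(\Li_3-a_{\tau_q\tau_q\tau_2}x^2y)$ divided by $a_{\tau_q}a_{\sigma_3}$. Expanding it and collecting the $x^3y$ terms produces the coefficient $a_{\sigma_3}a_{\tau_q\tau_q\tau_q\tau_2}-a_{\tau_q\sigma_3}a_{\tau_q\tau_q\tau_2}$ of $\log^3\Li_1$. The $\Li_{3,1}$ and $\Li_{2,1,1}$ relations are handled the same way. There the term $-a_{\sigma_3\tau_2}y$ turns into the constants $-a_{\sigma_3}$ inside the factors $\Li_3-a_{\sigma_3}$ and $\Li_{2,1}-a_{\sigma_3}$, after multiplying by $a_{\sigma_3}$.

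The verification therefore amounts to a polynomial identity in $A(Z)[x,y,z_3,z_{2,1}]$: I plug the formulas above into each claimed expression and check that it vanishes identically. Since this is a polynomial identity, I do not need any period to be nonzero for the statement as written. I expect the only real obstacle to be bookkeeping: expanding the $\Li_{3,1}$ relation, tracking the three cross terms coming from $a_{\tau_q\tau_q\tau_2}$, $a_{\tau_q\tau_2\tau_2}$ and $a_{\tau_q\tau_q\tau_2\tau_2}$, and confirming that the $x^2y^2$ coefficient is exactly $a_{\sigma_3}a_{\tau_q\tau_q\tau_2\tau_2}-a_{\sigma_3\tau_2}a_{\tau_q\tau_q\tau_2}-a_{\tau_q\sigma_3}a_{\tau_q\tau_2\tau_2}$ up to the overall monomial factor. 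I would carry out this check by direct expansion, possibly computer-assisted.

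\emph{Specialization.} Applying $\per_p$ turns these motivic identities into identities for the $p$-adic localization map restricted to $\Sel_{Z,4}^{\et,(1,0)}(X)$, using \Cref{20105F}. Hence the four functions on $\Pi_4^{\dR}$ lie in the $\Sigma$-refined ideal. Pulling them back along $\kappa_p^{\dR}$, and using $\Li_w(\kappa_p^{\dR}(z))=\Li_w^p(z)$, gives the claimed equations on $X(\bZ_p)_{Z,4}^{(1,0)}$.
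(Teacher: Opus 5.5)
Your proposal is correct and follows the paper's proof: set $x_2 = y_q = 0$ in Theorem~\ref{univcoord}, eliminate $z_3$ and $z_{2,1}$ using $\Li_3$ and $\Li_{2,1}$, eliminate the remaining monomials in $x_q, y_2$ using $\log$ and $\Li_1$, and then specialize along $\per_p$. Your restricted formulas agree with the paper's, and expanding the four claimed expressions shows they do vanish identically, including the $x^2y^2$ coefficient in the $\Li_{3,1}$ relation.
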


\begin{proof}
	Restricting the universal cocycle evaluation map to the refined Selmer scheme $\Sel_{Z,4}^{(1,0)}(X)$ corresponds to setting $x_2$ and $y_q$ equal to zero in \Cref{univcoord}:
	\begin{align*}
		(\ev \circ i_{(1,0)})^{\sharp} \log &= a_{\tau_q} x_q,\\
		(\ev \circ i_{(1,0)})^{\sharp} \Li_1 &= a_{\tau_2} y_2,\\
		(\ev \circ i_{(1,0)})^{\sharp} \Li_2 &= a_{\tau_q \tau_2} x_q y_2,\\
		(\ev \circ i_{(1,0)})^{\sharp} \Li_3 &= a_{\tau_q \tau_q \tau_2} x_q^2 y_2 + a_{\sigma_3} z_3,\\
		(\ev \circ i_{(1,0)})^{\sharp} \Li_{2,1} &= a_{\tau_q \tau_2 \tau_2} x_q y_2^2 + a_{\sigma_3} z_{2,1},\\
		(\ev \circ i_{(1,0)})^{\sharp} \Li_4 &= a_{\tau_q \tau_q \tau_q \tau_2} x_q^3 y_2 + a_{\tau_q \sigma_3} x_q z_3,\\
		(\ev \circ i_{(1,0)})^{\sharp} \Li_{3,1} &= a_{\tau_q \tau_q \tau_2 \tau_2} x_q^2 y_2^2 + a_{\tau_q \sigma_3} x_q z_{2,1} + a_{\sigma_3 \tau_2} y_2(z_3-1),\\
		(\ev \circ i_{(1,0)})^{\sharp}\Li_{2,1,1} &= a_{\tau_q \tau_2 \tau_2 \tau_2} x_q y_2^3 + a_{\sigma_3 \tau_2} y_2 (z_{2,1} - 1).
	\end{align*}
	It is clear that the function $a_{\tau_2} a_{\tau_q} \Li_2 - a_{\tau_q \tau_2} \log\cdot \Li_1$ pulls back to zero along the universal cocycle evaluation map, yielding the first claimed equation. To obtain the other three equations, start with $\Li_4$ (respectively, $\Li_{3,1}$ or $\Li_{2,1,1})$ and eliminate the variables $z_3$ and $z_{2,1}$ by subtracting multiples of $\Li_{3}$ and $\Li_{2,1}$. The result is a monomial in $x_q$ and $y_2$, which can be eliminated using $\log$ and $\Li_1$. 
\end{proof}

\subsection{Refined Chabauty--Kim for $S = \{2,3\}$}
\label{sec: refined Z-one-sixth}
Let us now specialize to the case $S = \{2,3\}$. This case is special in that we know exact expressions for all the motivic periods appearing in the functions of \Cref{thm:kim_functions_10}, thanks to prior work by Corwin, Dan-Cohen, Jarossay, and others. The periods depend on the choice of the Lie algebra generator $\sigma_3 \in \fn(Z)_{-3}$.

\begin{prop}
	\label{values of periods}
	Let $Z = \Spec \bZ[1/6]$. For a suitable choice of $\sigma_3$, the motivic periods whose $p$-adic values appear in \Cref{thm:kim_functions_10} are given as follows:
	\begin{align*}
		a_{\tau_2} &= \log^{\fu}(2), \\
		a_{\tau_3} &= \log^{\fu}(3), \\
		a_{\tau_3 \tau_2} &= -\Li_2^{\fu}(3), \\ %
		a_{\sigma_3} &= \zeta^{\fu}(3),\\
		a_{\tau_3 \tau_2 \tau_2} &= -\Li_3^{\fu}(-2) + \logu(2) \Liu_2(-2) + \frac12 \logu(2)^2 \logu(3),\\
		a_{\tau_3 \tau_3 \tau_2} &= -\Li_3^{\fu}(3),\\
		a_{\sigma_3 \tau_2} &= \zeta^{\fu}(3) \log^{\fu}(2) - \frac87 \Li_4^{\fu}(2) = \frac47 \Li_{3,1}^{\fu}(-1),\\
		a_{\tau_3 \sigma_3} &= \frac{18}{13} \Li_4^{\fu}(3) - \frac{3}{52} \Li_4^{\fu}(9),\\
		a_{\tau_3 \tau_3 \tau_3 \tau_2} &= -\Li_4^{\fu}(3),\\
		a_{\tau_3\tau_3\tau_2\tau_2} &= -\Li_4^{\fu}\left(\frac{2}{3}\right)-\Li_4^{\fu}(-2)
			-\frac{8}{7}\left(\frac{\log^{\fu}(2)^4}{24}
			+\Li_4^{\fu}\left(\frac12\right)\right) \\
			&\qquad -\frac{5}{13}\Li^{\fu}_4(3)
			+\frac{3}{52}\Li^{\fu}_4(9)
			-\frac14\log^{\fu}(2)^2\log^{\fu}(3)^2
			-\log^{\fu}(2)\Li^{\fu}_3(3) \\
			&\qquad +\frac16\log^{\fu}(2)\log^{\fu}(3)^3
			-\frac1{24}\log^{\fu}(3)^4,\\
		a_{\tau_3\tau_2\tau_2\tau_2} &= \Li_4^{\fu}(-2) - \log^{\fu}(2)\Li_3^{\fu}(-2) + \frac12 \log^{\fu}(2)^2 \Li_2^{\fu}(-2) + \frac16 \log^{\fu}(2)^3\log^{\fu}(3).
	\end{align*}
\end{prop}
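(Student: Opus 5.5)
The plan is to determine each period $a_u$ by the method already used in the proof of \Cref{period tau sigma}. We evaluate Kummer cocycles of well-chosen $\{2,3\}$-integral points with \Cref{univcoord}, using \Cref{motivic-integrals-via-kummer-cocycles} and \Cref{xy-on-kummer}, and then solve the resulting linear relations for the unknown coefficients $a_u$. The single-letter periods $a_{\tau_2},a_{\tau_3},a_{\sigma_3}$ come directly from \Cref{single-letter-periods}. The value of $a_{\sigma_3\tau_2}$ involves only the prime $2$ and the letter $\sigma_3$, so it is already given by \Cref{period tau sigma} and \Cref{rem: period sigma tau}, applied via the functoriality $U_{\bZ[1/6]}\to U_{\bZ[1/2]}$. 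For that transfer we choose the lift $\sigma_3$ compatibly; this is one of the places where ``for a suitable choice of $\sigma_3$'' enters.

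For the pure-$\tau$ words we use the point $z=3$, for which $x_2=0$, $x_3=1$, $y_2=-1$, $y_3=0$. In \Cref{univcoord} only the $\tau$-words with all $x$-letters equal to $\tau_3$ and all $y$-letters equal to $\tau_2$ survive. Hence:
\begin{itemize}
\item $\Li_2^{\fu}(3)=-a_{\tau_3\tau_2}$;
\item $\Li_3^{\fu}(3)=-a_{\tau_3\tau_3\tau_2}+\zeta^{\fu}(3)z_3(3)$;
\item $\Li_4^{\fu}(3)=-a_{\tau_3\tau_3\tau_3\tau_2}+a_{\tau_3\sigma_3}z_3(3)$.
\end{itemize}
The rational number $z_3(3)$ depends on $\sigma_3$. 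Here we use the remaining freedom: $\sigma_3$ can be modified by adding multiples of $[\tau_2,[\tau_2,\tau_3]]$-type brackets and similar terms. We claim this can be used to arrange $z_3(3)=0$. If not, we would instead pin $z_3(3)$ down $p$-adically, as in the proof of \Cref{period tau sigma}. Either way this yields the stated values of $a_{\tau_3\tau_2}$, $a_{\tau_3\tau_3\tau_2}$ and $a_{\tau_3\tau_3\tau_3\tau_2}$.

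The words $\tau_3\tau_2\tau_2$ and $\tau_3\tau_2\tau_2\tau_2$ need a point with $x_3\neq0$ and several $y_2$'s. Up to the $S_3$-action, $z=-2$ works, and more conveniently we use $1-z$ with $z=3/4$ or $-1/3$. We apply the functional equations of $\Li_n$ under $z\mapsto 1/z$ (Coleman--Sinnott, lifted motivically) to reduce to the evaluations of $\Li_n^{\fu}(-2)$, $\log^{\fu}(2)$ and $\log^{\fu}(3)$ that appear in the statement. Once the depth-one formulas are established, these steps are mechanical.

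The main obstacles are $a_{\tau_3\sigma_3}$ and $a_{\tau_3\tau_3\tau_2\tau_2}$. For $a_{\tau_3\sigma_3}$ we combine the $\Li_4$ equations at $z=3$ and $z=9$. Since $9$ is not in $X(\bZ[1/6])$ (as $1-9=-8$ is in fact fine), we have $x_3(9)=2$ and $y_2(9)=-3$. Eliminating $a_{\tau_3\tau_3\tau_3\tau_2}$, together with the necessary values $z_3(9)$, determined via the distribution relation $\Li_3(9)$ in terms of $\Li_3(\pm3)$, gives the combination $\tfrac{18}{13}\Li_4^{\fu}(3)-\tfrac{3}{52}\Li_4^{\fu}(9)$. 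The rational coefficients of $z_3$ at these points are the delicate input; we would verify them numerically $p$-adically, which is legitimate because they are rational by an argument like the one in the proof of \Cref{period tau sigma}.

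For $a_{\tau_3\tau_3\tau_2\tau_2}$ we would use the $\Li_{3,1}$ formula at a point such as $z=2/3$ or $z=-2$, where both $x_3$ and $y_2$ are nonzero. This expresses $\Li_{3,1}^{\fu}$ in terms of the unknown together with the previously found periods. We then rewrite $\Li_{3,1}^{\fu}$ in depth one using the known reduction of weight-4 depth-2 polylogarithms (Gangl-type identities, as in the prior work of Corwin--Dan-Cohen--Jarossay). That reduction, and checking it by $p$-adic numerics, is the step we expect to be hardest.
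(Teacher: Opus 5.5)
Your method of feeding Kummer cocycles of points of $X(\bZ[1/6])$ into \Cref{univcoord} and using shuffle relations is the one behind the computations the paper cites. However, your normalization of $\sigma_3$ is underdetermined. The space $\fn(Z)_{-3}$ is spanned by $\sigma_3$, $[\tau_2,[\tau_2,\tau_3]]$ and $[\tau_3,[\tau_2,\tau_3]]$, so a lift with $\langle\sigma_3,\zeta^{\fu}(3)\rangle=1$ has two free parameters. The condition $z_3(3)=\langle\sigma_3,\Li_3^{\fu}(3)\rangle=0$ fixes only the $[\tau_3,[\tau_2,\tau_3]]$-component. Replacing $\sigma_3$ by $\sigma_3+\alpha[\tau_2,[\tau_2,\tau_3]]$ keeps $z_3(3)=0$ but shifts $a_{\tau_3\tau_2\tau_2}$ by $-\alpha\zeta^{\fu}(3)$, $a_{\tau_3\tau_2\tau_2\tau_2}$ by $-\alpha a_{\sigma_3\tau_2}$, and $a_{\tau_3\tau_3\tau_2\tau_2}$ by $-\alpha a_{\tau_3\sigma_3}$. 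So those three formulas cannot be derived until $\alpha$ is fixed. The paper's choice adds the condition $\langle\sigma_3,\Li_3^{\fu}(-2)\rangle=0$. With it, the point $z=-2$ (where $x_2=1$, $y_3=-1$, $x_3=y_2=0$ and $z_3(-2)=0$) gives $a_{\tau_2\tau_3}=-\Li_2^{\fu}(-2)$, $a_{\tau_2\tau_2\tau_3}=-\Li_3^{\fu}(-2)$ and $a_{\tau_2\tau_2\tau_2\tau_3}=-\Li_4^{\fu}(-2)$. Shuffle relations in the letters $\tau_2,\tau_3$ then turn these directly into the stated $a_{\tau_3\tau_2\tau_2}$ and $a_{\tau_3\tau_2\tau_2\tau_2}$; no detour through $1-z$, $3/4$, $-1/3$ or inversion formulas is needed. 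Two smaller points. Your ``either way'' fallback is wrong: if $z_3(3)\neq 0$ then $a_{\tau_3\tau_3\tau_2}=-\Li_3^{\fu}(3)+z_3(3)\zeta^{\fu}(3)$, which is not the stated value. And the transfer of $a_{\sigma_3\tau_2}$ from $\bZ[1/2]$ involves no choice, because $\fn(\bZ[1/2])_{-3}$ is one-dimensional and every normalized $\sigma_3$ maps to the normalized generator there.

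The second gap is exactness. Rational constants such as $z_3(9)$, the numbers $z_{2,1}(\cdot)=\langle\sigma_3,\Li_{2,1}^{\fu}(\cdot)\rangle$, and the $\zeta^{\fu}(3)$-coefficients in depth-one decompositions cannot be ``verified numerically $p$-adically''. Knowing that a $p$-adic number is rational, plus finitely many of its digits, does not determine it without a height bound. This is exactly what the paper's proof has to repair in its sources. As in the proof of \Cref{period tau sigma}, it derives such coefficients exactly from $p$-adic functional equations pushed through $\per_p$ at a prime with $\zeta^p(3)\neq 0$. The equations used are Landen's trilogarithm equation at $z=-2$ and $z=4$, the duplication formula together with $\Li_3(2)=\frac78\zeta(3)$, and the identity $\Li_3(-3)-2\Li_3(3)=-\frac{13}{6}\zeta(3)$. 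For example, your elimination between $z=3$ and $z=9$ yields the stated $a_{\tau_3\sigma_3}$ precisely when $z_3(9)=-\frac{26}{3}$. That follows from $\Li_3(9)=4(\Li_3(3)+\Li_3(-3))$ only once the $\Li_3(-3)$ identity is known exactly. Finally, $a_{\tau_3\tau_3\tau_2\tau_2}$ is not actually computed. Neither $z=2/3$ (where $y_2=0$) nor $z=-2$ (where $x_3=y_2=0$) has $x_3y_2\neq 0$, contrary to your claim. More importantly, the real work is left as a black box: writing $\Li_{3,1}^{\fu}$ and $z_{2,1}$ at the chosen point exactly in terms of $\Li_4^{\fu}(\frac23)$, $\Li_4^{\fu}(-2)$, $\Li_4^{\fu}(\frac12)$, $\Li_4^{\fu}(3)$, $\Li_4^{\fu}(9)$ and products. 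The paper does take these values from the Dan-Cohen--Jarossay computation, but only after correcting two errors there: a $-\frac78$ that should be $-\frac87$, and a $-E/2$ that should be $-2E$. Uncorrected, these errors make the values of $a_{\sigma_3\tau_2}$, $a_{\tau_3\tau_3\tau_3\tau_2}$ and $a_{\tau_3\tau_3\tau_2\tau_2}$ wrong, so deferring to that prior work as is would not prove the stated formula.
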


\begin{proof}
	The values of the single-letter periods $a_{\tau_2}$, $a_{\tau_3}$, $a_{\sigma_3}$ were already recorded in \Cref{single-letter-periods}. The value of the period $a_{\tau_3\tau_2}$ can be found using the $\{2,3\}$-integral point $3 \in X(\bZ[1/6])$; see \cite[§3.5]{BBKLMQSX} for the $p$-adic calculation, where the period is denoted $a_{3,2}$. The periods $a_{\tau_3\tau_3\tau_3\tau_2}$ and $a_{\tau_3\sigma_3}$ are computed in \cite[§4.3.3]{CDC:polylog1}; they are denoted $f_{\tau_2\tau\tau\tau}$ and $f_{\sigma\tau}$ there. This calculation uses the unique choice of $\sigma_3$ satisfying $\langle \sigma_3, \zeta^{\fu}(3) \rangle = 1$ and
	\[ \langle \sigma_3, \Li_3^{\fu}(-2) \rangle = \langle \sigma_3, \Li_3^{\fu}(3) \rangle = 0. \]
	The remaining periods are computed in \cite[§5]{DCJ:M05}, using the same choice of $\sigma_3$. Note, however, that the values of $a_{\sigma_3\tau_2}$, $a_{\tau_3\tau_3\tau_3\tau_2}$ and $a_{\tau_3\tau_3\tau_2\tau_2}$ listed in \cite[§5.23]{DCJ:M05} (denoted $f_{\tau\sigma}$, $f_{\tau vvv}$ and $f_{\tau\tau v v}$ there) are incorrect. This is a consequence of an error in Proposition~5.8 of loc.\ cit. (the second formula should have $-8/7$ instead of $-7/8$) and an error at the end of §5.18 (the formula for $f_{2333}$ should have $-2E$ instead of $-E/2$). After fixing these errors one gets the values listed above. However, we have to supply an additional argument to deal with the fact that some of the rational coefficients in \cite[§5.23]{DCJ:M05} were only computed numerically up to high $p$-adic precision rather than derived exactly. In order to know that the coefficient of $\zeta^{\fu}(3)$ in the identity \cite[§5.16]{DCJ:M05}
	\[ \Li^{\fu}_3\left(\frac{2}{3}\right) = -\frac 1 2 \log^{\fu}(2)\log^{\fu}(3)^2 +\frac 1 6 \log^{\fu}(3)^3 -\Li^{\fu}_3(-2) - \Li^{\fu}_3(3) + \zeta^{\fu}(3) \]
	is exactly equal to $1$, one may argue as follows: the coefficient is known to be a rational number, so it suffices to prove that the identity holds (exactly) in $\bQ_p$ after applying the $p$-adic period map for a single prime~$p$ with $\zeta^p(3) \neq 0$, say $p = 5$. One may then invoke the trilogarithm identity \cite[Proposition~A.9.1(3)]{MTMUE}
	\[ \Li_3(z) + \Li_3(1-z) + \Li_3\left(\frac{z}{z-1}\right) - \frac16 \log(1-z)^3 + \frac12 \log(z) \log(1-z)^2 = \zeta(3), \]
	the $p$-adic version of the \emph{Landen functional equation} (see \cite{NS:landen-functional-equation} for its $\ell$-adic variant and \cite[pp.\ 112--118]{Landen1780} for its classical form). Substituting $z = -2$ yields the desired identity. Similar arguments can be used to get an exact proof of the identity \cite[§5.21]{DCJ:M05}
	\begin{equation}
		\label{eq:Li3-43}
		\Li_3^{\fu}\left(\frac43\right) + 4 \Li_3^{\fu}(-2) + 2\Li_3^{\fu}(3) + \log^{\fu}(2) \log^{\fu}(3)^2 - \frac16 \log^{\fu}(3)^3 = -\frac13 \zeta^{\fu}(3). 
	\end{equation}
	First apply the Landen functional equation with $z = 4$ to obtain
	\begin{equation}
		\label{eq:landen-4}
		\Li_3(4) + \Li_3(-3) + \Li_3\left(\frac43\right) - \frac16 \log(3)^3 + \log(2) \log(3)^2 = \zeta(3).
	\end{equation}
	The duplication formula (\cite[Proposition~6.1]{Coleman} with $m=2$)
	\[ \Li_n(z^2) = 2^{n-1}(\Li_n(z) + \Li_n(-z)) \]
	for $z = 2$ and $n = 3$, combined with the identity $\Li_3(2) = \frac78 \zeta(3)$ from Eq.~\eqref{eq:Li3-2-zeta} yields
	\begin{equation}
		\label{eq:Li3-4}
		\Li_3(4) = \frac72 \zeta(3) + 4 \Li_3(-2).
	\end{equation}
	We also have the identity
	\begin{equation}
		\label{eq:Li3-neg-3}
		\Li_3(-3) - 2\Li_3(3) = -\frac{13}{6} \zeta(3)
	\end{equation}
	from \cite[§6]{CDC:polylog1}. Substituting \eqref{eq:Li3-4} and \eqref{eq:Li3-neg-3} into \eqref{eq:landen-4} yields the desired identity \eqref{eq:Li3-43}.
\end{proof}

\begin{rem}
	The first two of the four equations in \Cref{thm:kim_functions_10} describe the \emph{polylogarithmic} refined Chabauty--Kim locus $X(\bZ_p)_{Z,\PL,4}^{(1,0)}$ for $S = \{2,q\}$. They were previously derived in \cite{luedtke:refined}. In the case $S = \{2,3\}$, the paper verifies the refined variant of Kim's Conjecture in level~4 for all primes~$p$ with $5 \leq p < 10{,}000$. The use of non-polylogarithmic functions was avoided by approximating the full level~4 locus with the $S_3$-symmetrization of the polylogarithmic locus. 
\end{rem}

\section{Algorithms}%

The methods developed above allow us to construct a point-counting algorithm based on Chabauty--Kim theory for $X = \Poneminusthreepoints$ which, in turn, leads to a strengthening of Proposition 1.2.2(2) of \cite{MTMUEII}. That is, the algorithm constructed in loc.\ cit.\ is only proved to halt if a particularly strong form of Kim's Conjecture holds --- one in which we restrict attention to the polylogarithmic quotient of the unipotent fundamental group of $X$ and then symmetrize the resulting ideal of Kim functions. We now know that it is too optimistic to hope for such a strong form of the conjecture, thanks to the counterexamples recently obtained in the number field setting \cite{LL:PolylogNF}. In the new algorithm we may replace this extra-strong form of Kim's Conjecture with its original, safer form. The precise statement appears in \Cref{30305A}. 

Most of the steps of the algorithm are unchanged, and we do not repeat them here in detail. In fact, the only significant change concerns the so-called \emph{geometric step}. The change is that this portion of the algorithm is no longer purely geometric, but includes arithmetic aspects as well, which now rely on relations between motivic multiple zeta values. Thankfully, these arithmetic issues, which intrude on the geometric step, are in fact quite minimal.

\subsection{Notation}%
\label{21123A}
Fix $Z = \Spec \ZZ \setminus S$. Recall from §\ref{sec:mixed-tate-galois-group} and §\ref{sec:hopf-algebra-action} that we denote by $U_Z$ the unipotent radical of the mixed Tate Galois group of $Z$, by $A(Z)$ its ring of functions, by $\fn(Z)$ its Lie algebra, and by $\cH(Z) = A(Z)^{\vee}$ the associated cocommutative Hopf algebra. In the following we write $U_Z$ as $U(Z)$ to be more consistent with \cite{MTMUEII}. Our first goal here is to review our construction of 
an algebra basis of $A(Z)$ consisting of motivic iterated integrals. 
For this, we need to recall some amount of notation.

We fix a positive integer $n$ throughout and introduce the following truncation by half-weight: let $\fn(Z)_{[\geq -n]}$ denote the quotient of $\fn(Z)$ by all elements of half-weight $ < -n$; let $U(Z)_{[\geq -n]}$ denote the associated quotient of $U(Z)$, let $A(Z)_{[\leq n]}$ denote its coordinate ring and $\cH(Z)_{[\geq -n]}$ the associated cocommutative Hopf algebra. Note that $A(Z)_{[\leq n]} \subseteq A(Z)$ is the $\bQ$-subalgebra generated by elements of half-weight $\leq n$; in particular, the two algebras agree in half-weight $\leq n$. 

In order not to cloud our formulas with the repeating decorations ``$[\le n]$'', ``$[ \ge -n]$'', after fixing $n$, we adopt the following notation and terminology:
\begin{align*}
A(Z) = A(Z)_{[\le n]}, \quad &\text{the \emph{$n$th (arithmetic) coordinate subalgebra};}
\\
U(Z) = U(Z)_{[\ge -n]}, \quad &\text{the \emph{$n$th (arithmetic) unipotent quotient};}
\\
\nN(Z) = \nN(Z)_{[\ge -n]}, \quad &\text{the \emph{$n$th (arithmetic) nilpotent quotient};}
\\
\Hh(Z) = \Hh(Z)_{{[\ge -n]}}, \quad &\text{the \emph{$n$th (arithmetic) complete quotient Hopf algebra}}.
\end{align*}
We let $\langle \cdot, \cdot \rangle$ denote the natural nondegenerate pairing
\[
\Hh(Z) \times A(Z) \to \QQ.
\]
Given $f \in A(Z)$ and  $v \in U(Z)(\QQ) \subseteq \cH(Z)$ grouplike, we have
\[
\qq v, f \pp = f(v).
\]

For $1 \leq i \leq n$, we let
\[
E_{i}(Z) = \Ext^1_Z \big(\QQ(0), \QQ(i)\big)
\subseteq A(Z)_{i}
\]
denote the space of extensions, and we let
\[
D_{i}(Z) = \m{Im} \Big(
\bigoplus_{\substack{j,k \ge 1,\\ j+k = i}}
A(Z)_j \otimes A(Z)_k \to A(Z)_{i}
\Big)
\]
denote the space of decomposables.

On the geometric side, we let $U(X)$ denote the level-$n$ quotient (equivalently: the half-weight $\geq -n$ quotient) of the unipotent de Rham fundamental group of $X$, i.e., the quotient of $\pi_1^{\dR}(X)$ by the $(n+1)$st step in the descending central series. Let $\nN(X)$ denote the Lie algebra of~$U(X)$, let $A(X)$ denote the ring of functions, and $\cH(X) = A(X)^{\vee}$ the cocommutative complete Hopf algebra associated to~$U(X)$. As in the case of the unipotent mixed Tate Galois group, we let $\qq \cdot, \cdot \pp$ denote the natural nondegenerate pairing 
\begin{equation*}
\Hh(X) \times A(X) \to \QQ.
\end{equation*}

\subsection{Combinatorially unramified motivic iterated integrals}
\label{21123Ba}
Our search for an algebra basis for $A(Z)$ goes through a collection of motivic iterated integrals that are \textit{obviously} unramified, as we now recall. Let $\om$ be a word of length $\leq n$ in differential forms $\frac{\rd t}{t-c_1}, \dots, \frac{\rd t}{t-c_r}$ with $c_i \in \QQ$, and let $a$, $b$ be basepoints on $\AA^1 \setminus \{c_1, \dots, c_r\}$ (i.e.\ $\QQ$-points, or tangential basepoints). For either type of basepoint $a$, let $a_0$ denote its \textit{location}: $a= a_0$ if $a$ is a rational point; otherwise $a$ is a tangent vector to $\PP^1$ \textit{at} $a_0 \in \PP^1(\QQ)$. We say that the datum $(a,\om,b)$ is \emph{combinatorially unramified} if it behaves nicely over $Z$ in an obvious sense, which requires several cases to state precisely. In all cases, we assume the divisor associated to $\om$, $a_0$, $b_0$, $\infty$ is \'etale over $Z$ (i.e., a union of disjoint sections). If $a$ is a tangent vector and $a_0 \neq b_0$, we assume the support of $0+a+\infty$ on $\PP^1 \supseteq \AA^1 \cong T_{a_0} \bP^1$ is \'etale over $Z$ and similarly for $b$. 
If $a$ and $b$ are tangent vectors at the same point $a_0 = b_0$, we assume the support of $0+a+b+\infty$ is \'etale over $Z$. These conditions assure that the unipotent motivic path torsor ${_b \Pi_a(\AA^1 \setminus{\{c_1, \dots, c_r\}})}$ belongs to the category of mixed Tate motives over $Z$. Generalizing the definition of motivic multiple polylogarithms and multiple zeta values (§\ref{sec:motivic-iterated-integrals}), which are motivic iterated integrals on $\bP^1 \smallsetminus \{0,1,\infty\}$, we can define the motivic iterated integral $I^{\fu}(a;\omega;b) \in A(Z)$ as the function $U(Z) \to \bA^1$,
\[ I^{\fu}(a;\omega;b) \colon u \mapsto \int_{u(\gamma_{a,b}^{\dR})} \omega, \]
where $\gamma_{a,b}^{\dR} \in {_b \Pi_a^{\dR}(\AA^1 \setminus{\{c_1, \dots, c_r\}})}(\bQ)$ is the \emph{canonical} (i.e., $\bG_m$-invariant) de Rham path from $a$ to~$b$ on $\AA^1 \setminus{\{c_1, \dots, c_r\}}$. The assumptions on $a,b, \omega$ are sufficient (but far from necessary) to ensure that $I^\uU(a; \om; b)$ belongs to $A(Z)$; we say that $I^\uU(a; \om; b)$ is \emph{combinatorially unramified over $Z$}, which is a strictly stronger condition than \emph{unramified}. We recall that the half-weight of $I^\uU(a; \om; b)$ is equal to the number of letters in $\om$.

\subsection{Exact basis algorithm}
\label{21123B}
The case $K = \QQ$ of \cite[\S3.8]{MTMUEII} provides an algorithm which, applied to the data $(Z, n)$, produces an open subscheme $Z^o \subseteq Z$, and for each $i \in [2,n]$, a linearly independent set $\Pp_{i} \subseteq A(Z^o)_{i}$ of combinatorially unramified motivic iterated integrals of half-weight $i$ such that $\Pp_{i}$ spans a vector subspace $P_{i} \subseteq A(Z^o)_{i}$ complementary to the (direct) sum $E_{i}(Z^o)+D_{i}(Z^o)$. We refer to this algorithm as the \emph{exact basis algorithm}. The exact basis algorithm only halts if such subsets $\Pp_2, \dots, \Pp_n \subseteq A(Z^o)$ exist for \textit{some} $Z^o \subseteq Z$. This is Goncharov's conjecture, refined with respect to ramification, as formulated in Conjecture 2.2.7 of \cite{MTMUEII}.\footnote{$p$-Adic approximation does intervene in the construction of this algorithm. Nevertheless, the output $\Pp_{i}$ is proved to obey the desiderata described above.} For $i = 1$ we set $\Pp_1 \coloneqq \emptyset$.

In the case $Z = \Spec \bZ$, Brown's theorem \cite{BrownMTZ} guarantees that the exact basis algorithm succeeds with $Z^o = \Spec(\bZ)$ and the output $\Pp_i(\Spec \bZ) \subseteq A(\Spec \bZ)_i$ consists entirely of multiple zeta values $\zeta^{\fu}(n_1,\ldots,n_r)$ of half-weight~$i$. For general $Z \subseteq \Spec \bZ$, the sets $\Pp_i(Z^o)$ satisfy 
\[ \Pp_i(\Spec \bZ) \subseteq \Pp(Z^o) \]
by the functoriality discussed in \cite[§5.6]{DCJ:M05}.

\subsection{Construction of associated bases}
\label{21123C}
The sets $\Pp_i$ may be used to construct an algebra basis for the $n$th coordinate subalgebra $A(Z^o)$, which can then be used to construct free generators (modulo the half-weight truncation) for the $n$th nilpotent quotients $\nN(Z^o)$ and $\nN(Z)$, as we now explain. In the case of the number field $\QQ$, the extension spaces $E_i$ for $i \in [1,n]$ have natural bases available: the motivic logarithms $\log^\uU(v)$ for $v \notin Z^o$ form a basis for $E_1(Z^o)$, and the motivic zeta values $\ze^\uU(i)$ for $i \in [3,n]$ odd form bases for the spaces $E_i$ (see Lemma \ref{motivic-zetas-nonzero}). For $i \in [2,n]$ even, the space $E_i$ is trivial and we can take the empty set as basis. Denote these bases $\Ee_i(Z^o)$ and set
\[
\Aa_i(Z^o) := \Ee_i(Z^o) \cup \Pp_i
\quad
\text{and}
\quad
\Aa(Z^o) = \bigcup_{i=1}^n \Aa_i(Z^o).
\]
Then $\Aa(Z^o)$ forms an algebra basis of the $n$th coordinate subalgebra $A(Z^o)$.

We define $\si_i \in \Hh(Z^o)_{-i}$ for $i \in [3,n]$ odd by requiring that $\si_i$ pair trivially with $P_i(Z^o) \oplus D_i(Z^o)$ and that 
\[
\langle \si_i, \ze^\uU(i) \rangle = 1.
\]
We also define $\tau_q \in \Hh(Z^o)_{-1}$ for $q \notin Z^o$ to be the element dual to $\log^\uU(q)$ with respect to the basis $\{\log^\uU(v)\}_{v \notin Z^o}$ of $A(Z^o)_1$. We then find that the elements $\tau_q$, $\si_i$ ($q \notin Z^o$, $i \in [3,n]$ odd) together form a set
\[
\ep(Z^o) = \bigcup_i \ep_i(Z^o)
\]
of free generators (modulo the half-weight truncation) of $\nN(Z^o)$. Moreover, the surjection
\[
\nN(Z^o)
\to
\nN(Z)
\]
sends $\tau_v$ for $v \in Z \setminus Z^o$ to $0$, and sends the remaining generators $\tau_q$, $\si_i$ to a set of free generators of $\nN(Z)$. The further projection $\nN(Z) \to \nN(\Spec \ZZ)$ provides a set
\[
\ep(\Spec \ZZ) = \{\si_3, \si_5, \dots \}
\]
of free generators of $\nN(\Spec \ZZ)$ (modulo the half-weight truncation). For the remainder of this section, we fix these generators ($\ep(\Spec \ZZ)$, $\ep(Z)$, $\ep(Z^o)$) by applying our algorithms.\footnote{In this way, the \textit{Exact basis algorithm}, and the ensuing construction of associated sets of generators, will play a dual role for us. One role is that of a recipe for computation. Another, is that of a \textit{definition}. That is, we \textit{define} the set of generators $\ep(Z)$ to be the output of the algorithm. This sort of definition differs from the usual sorts of definitions in mathematics in a few ways. One way is significant, but not so unusual: it's conditional on conjectures (if the algorithm doesn't halt, it produces no output). The other ways are quite trivial: it's unusually long, and it involves many arbitrary choices which we have not made explicitly.}

\subsection{Approximate change of basis algorithm}
\label{30301A}
The algebra basis
$
\Aa(Z^o) 
$
for the $n$th coordinate subalgebra $A(Z^o)$ constructed in paragraph \ref{21123C} using our \textit{exact basis algorithm} gives rise to a vector space basis, namely the set $\Aa^\odot(Z^o)$ of monomials in $\Aa(Z^o)$. Meanwhile, for all $i \leq n$, the set $\ep(Z^o)$ of free generators for the $n$th nilpotent quotient $\nN(Z^o)$ constructed in the same paragraph, gives rise to an associated \emph{shuffle basis} $\{a_{\omega}\}_{\omega}$ of $A(Z^o)_{i}$ indexed by words $\omega$ of half-weight~$-i$ in the set $\ep(Z^o)$ of generators. Section 3.9 of \cite{MTMUEII} provides an algorithm for computing the change of basis matrix up to given $p$-adic precision.

\subsection{Exact decomposition algorithm}
\label{30302A}
Brown's theorem \cite{BrownMTZ} combined with the sum total of all known relations between multiple zeta values modulo the ideal generated by $\ze(2)$ provides us, for each $i \in [1,n]$, with a $0$-sequence of finite-dimensional $\QQ$-\textit{vector spaces}
\[
\QQ^R \xrightarrow{M} \QQ^{\vec{\NN}} 
\xrightarrow{\zeta} A(\Spec \ZZ)_i
\]
with $\ze$ surjective. Here $\vec{\NN}$ may be viewed as an index set parametrizing all multiple zeta values of half-weight~$i$ and $R$ may be viewed as an index set parametrizing relations in half-weight~$i$. The basis algorithm (\ref{21123B}, \ref{21123C}) for $Z = \Spec \bZ$ provides us with an algebra basis $\Aa(\Spec \bZ)$ of $A(\Spec \bZ)$ consisting of multiple zeta values. A vector space basis of $A(\Spec \bZ)_i$ is then given by all monomials in these algebra generators, which can be expressed as linear combinations of MZVs using the shuffle relations~\eqref{eq:mzv-shuffle-relation}. We obtain a set $\Bb_i \subseteq \QQ^{\vec{\NN}}$ which maps to a basis of $A(\Spec \ZZ)_i$ for each $i$. The \textit{Exact decomposition algorithm} takes as further input a motivic multiple zeta value viewed as an element $\vec{n} \in \vec{\NN}$ and attempts to use the matrix $M$ to expand $\ze(\vec{n})$ in the basis $\ze(\Bb_i)$ using elementary linear algebra. 

There are many conjectures stating that one or another subset of $R$ spans all relations \cite{BachmannPoster}; any one of them would imply that $\ze$ is the cokernel of $M$, and hence that this attempt succeeds. 

\subsection{Action of mixed Tate Galois group}

We record a formula for the action of the mixed Tate Galois group on the unipotent fundamental group of $X$.

\label{21123F}
\begin{slemma}
\label{galois-action-lemma}
Let $i \in [3,n-1]$ be odd and let $\rho = \si_i$ be the element of half-weight $\opnm{hwt}(\rho) = -i$ of the set of free generators $\ep(\Spec \ZZ) = \{\si_3, \si_5 , \ldots \}$ for the $n$th nilpotent quotient $\nN(\Spec \ZZ)$. Recall that $e_0, e_1$ denote the elements (of half-weight $\opnm{hwt}(e_0) = \opnm{hwt}(e_1) = -1$) of $\nN(X)$ associated to the 1-forms $\frac{\rd z}{z}$ and $\frac{\rd z}{1-z}$, respectively. If $w$ is a word in the alphabet $e_0, e_1$ and $e \in \{e_0, e_1\}$ is a letter, we define the left-division $e\backslash w$ to be~$0$ if the leftmost letter in $w$ is not $e$, or the result of deleting the leftmost letter if it is. We define the right-division $w / e$ similarly. We artificially define $\zeu(0) = 0$. Then
\[
\rho e_1 = \sum_{\opnm{hwt}(w) = \opnm{hwt}(\rho) - 1}
\Big\qq
\rho, \zeu(e_1 \backslash w) - \zeu(w/e_1) 
\Big\pp w.
\]

\end{slemma}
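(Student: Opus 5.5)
The plan is to use the Lie algebra version of the motivic Galois action, \Cref{galois-action-lie-algebras}\ref{item:action on e1}, which gives
\[ \rho.e_1 = [e_1,\kappa_1(\rho)] = e_1\kappa_1(\rho) - \kappa_1(\rho)e_1, \]
with the bracket computed as the commutator inside $\cH(X)=\QQ\llangle e_0,e_1\rrangle$. We then expand $\kappa_1(\rho)$ by \Cref{lem:kummer-cocycle-via-mzv}:
\[ \kappa_1(\rho) = \sum_{v} \langle \rho, \zeu(v)\rangle\, v. \]
Since $\kappa_1$ is $\bG_m$-equivariant and $\rho$ is pure of half-weight $-i$, only words $v$ of length $i$ contribute. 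The truncations cause no trouble: $\rho.e_1$ has half-weight $-(i+1)\ge -n$ because $i\le n-1$, so its computation in the level-$n$ quotient $\cH(X)$ and in $\cH(Z)_{[\ge -n]}$ agrees with the untruncated one. The action also factors through $\nN(\Spec\ZZ)$ by part~\ref{item:action factors} of the same theorem.

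Next we extract the coefficient of a fixed word $w$ of length $i+1$ (half-weight $\opnm{hwt}(\rho)-1$). In the term $e_1\kappa_1(\rho)=\sum_v \langle\rho,\zeu(v)\rangle\, e_1v$, the word $w$ appears exactly when $w=e_1v$. In that case $v=e_1\backslash w$, and the coefficient is $\langle \rho,\zeu(e_1\backslash w)\rangle$. If $w$ does not begin with $e_1$ there is no contribution, which matches the convention $e_1\backslash w=0$ and $\zeu(0)=0$. Likewise, $\kappa_1(\rho)e_1$ contributes $\langle\rho,\zeu(w/e_1)\rangle$ when $w$ ends in $e_1$ and contributes zero otherwise.

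Subtracting the two contributions and using linearity of the pairing gives the stated formula. We also need $\zeu(v)$ to make sense for an arbitrary word $v$ over $\{e_0,e_1\}$, including words that start with $e_1$ or end with $e_0$. This is supplied by the convention in §\ref{sec:motivic-iterated-integrals} and by \Cref{lem:kummer-cocycle-via-mzv}, which is stated for all words $w$.

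The main obstacle is bookkeeping rather than substance. It has two parts: checking that the Hopf-algebra commutator correctly represents the Lie bracket in the truncated setting, and checking that the choice of $\rho$ from the exact basis algorithm does not affect the argument. The derivation uses only $\rho\in\nN(\Spec\ZZ)$ together with \Cref{lem:kummer-cocycle-via-mzv}, so any normalization of $\rho$ is harmless. As a consistency check, the formula reproduces the computation of $\sigma_3.e_1$ in §\ref{sec:galois-action-example}.
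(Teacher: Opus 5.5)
Your proposal is correct and follows essentially the same route as the paper. The paper also writes $\rho e_1=[e_1,\kappa_1(\rho)]$ using \Cref{galois-action-lie-algebras}\ref{item:action on e1}, then expands $\kappa_1(\rho)=\sum_v\langle\rho,\zeu(v)\rangle v$ via \Cref{lem:kummer-cocycle-via-mzv}, and reads off the $w$-coefficient of $e_1v-ve_1$ as $\langle\rho,\zeu(e_1\backslash w)-\zeu(w/e_1)\rangle$.
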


\begin{proof}
For $k \leq n$, the words $w$ of length $k$  in the alphabet $\{e_0,e_1\}$ form a basis of $\cH(X)_{-k}$, and we use the induced inner product on the direct sum of these spaces.
By Theorem \ref{galois-action-lie-algebras}\ref{item:action on e1} and \Cref{lem:kummer-cocycle-via-mzv}, we have 
\begin{align*}
\qq w , \rho e_1 \pp 
&= \big\qq w, [e_1, \ka_1 (\rho)] \big\pp
\\
&= \left\qq
w, \sum_v \qq \rho ,\zeu(v) \pp (e_1v - ve_1) 
\right\pp
\\
&=
\left\qq
w
,
\sum_u 
\big\qq \rho, \zeu(e_1\backslash u) - \zeu(u/e_1) \big\pp
u 
\right\pp
\\
&=
\big\qq \rho, \zeu(e_1\backslash w) - \zeu(w/e_1) \big\pp. \qedhere
\end{align*}
\end{proof}

\subsection{Selmer matrix entries and Selmer coordinates}
\label{21123H}
Let
\[
Z^1_\Gm = \rZ^1\big(U(Z), U(X)\big)^\Gm
\]
be the motivic Selmer scheme associated to the level-$n$ quotient $U(X)$. Note that we can use the half-weight $\geq -n$ quotient $U(Z)$ here instead of the full motivic unipotent Galois group without changing the space of $\bG_m$-equivariant cocycles. If $R$ is a $\QQ$-algebra, then by an \emph{$R$-family} of $\Gm$-equivariant cocycles, we mean a $\Gm$-equivariant cocycle
\[
U(Z)_R \to U(X)_R;
\]
by a \emph{cocycle family}, we mean an $R$-family for some arbitrary $R$. We allow ourselves to drop $R$ from the notation when the generalization to families is evident.

As explained in \cite[§8]{Luedtke:motivic_selmer_scheme} and recalled in §\ref{sec:hopf-algebra-action}, every $\Gm$-equivariant group cocycle~$c$ extends uniquely to a $\Gm$-equivariant Hopf algebra cocycle
\[
\Hh(Z) \to \Hh(X)
\]
on the level of $n$th complete quotient Hopf algebras; the extension may be constructed as the linear dual of the map of $n$th coordinate subalgebras
\[
c^\sharp: A(Z) \from A(X).
\]
The extended cocycle may then be restricted to a $\Gm$-equivariant Lie algebra cocycle
\begin{equation*}
	\label{eq:Lie-cocycle}
\tag{$\nN$}
\nN(Z) \to \nN(X)
\end{equation*}
between $n$th nilpotent quotients. We denote the extension of $c$ to Hopf algebras, as well as its restriction to Lie algebras by the same symbol $c$. Similar comments apply to cocycle families.

Given $v \in \Hh(Z)$ and $f \in A(X)$, there's an associated \emph{Selmer matrix entry}
\[
\Phi^v_f: Z^1_\Gm \to \AA^1_\QQ
\]
given in terms of a cocycle family $c$ by
\[
\Phi^v_f(c ) =  \qq f, c v \pp.
\]
When $f = f_w$ is the linear functional dual to a word $w$ in $\{e_0, e_1\}$ of length $\leq n$, we abbreviate
\[
\Phi^v_{f_w} = \Phi^v_w,
\]
or even
\[
\Phi^v_{e_0} = \Phi^v_0, 
\quad
\Phi^v_{e_1} = \Phi^v_1, 
\quad
\Phi^v_{e_0 e_1} = \Phi^v_{01}, 
\quad
\text{ etc.} 
\]

We will also use a variant defined in terms of the restriction of cocycles to Lie algebras as in \eqref{eq:Lie-cocycle}: given $v \in \nN(Z)$ and $\la \in \nN(X)^\lor$, we have the associated \emph{Selmer Lie matrix entry}
\[
\Psi^v_\la: Z^1_\Gm \to \AA^1_\QQ
\]
given by
\[
\Psi^v_\la(c) = \qq \la, c v \pp.
\]
Recall that one may associate to the Lyndon words $w$ of length $i$ in $\{e_0, e_1\}$ certain Lie monomials $\la(w)$ of length $i$ which form a basis of $\nN(X)_{-i}$ (§\ref{sec:lyndon-basis}). We denote by $\la(w)^\lor$ the dual vector with respect to this basis and set
\[
\tag{SLME}
\Psi^v_w := \Psi^v_{\la(w)^\lor}.
\]
For example, the coordinates on the level-4 Selmer scheme $\rH^1(G_Z, \Pi_4^{\dR})$ from \Cref{ex:coordinates-depth-4} can be written as
\[
x_{\ell} = \Psi^{\tau_{\ell}}_0, \quad
y_{\ell} = \Psi^{\tau_{\ell}}_1, \quad
z_3 = \Psi^{\si_3}_{001}, \quad 
\text{and} \quad
z_{2,1} = \Psi^{\si_3}_{011}
\]
in this notation, abbreviating $e_0e_0e_1$ as $001$ etc.

\subsection{Decomposition of Selmer matrix entries}
\label{21202A}
According to \cite[Theorem~6.4]{Luedtke:motivic_selmer_scheme},
the Selmer Lie matrix entries
\[
\tag{SCF}
\Psi^\rho_w \quad
\text{with } \rho \in \ep(Z) \text{ a letter, and } w 
\text{ a Lyndon word in } \ep(X) = \{e_0, e_1\}
\]
with $\hwt(\rho) = \hwt(w) \geq -n$ form a $\QQ$-algebra basis of the coordinate ring $\Oo(Z^1_{\Gm})$. We refer to these as \emph{Selmer coordinate functions}. 
Thus, each Selmer matrix entry
\[
\tag{SMC}
\Phi^\om_w \quad
\text{with } \om 
\text{ a word in } \ep(Z), 
\text{ and } w 
\text{ a word in } \ep(X)
\]
with $\hwt(\om) = \hwt(w) \geq -n$, may be written as a polynomial in the Selmer coordinate functions (SCF), a process which we refer to as \emph{decomposition of Selmer matrix entries}. We refer to the negative of the common half-weight of $\om$ and $w$ also as the \emph{half-weight of the Selmer matrix entry $\Phi^\om_w$.}
We now construct an algorithm which carries out this process.\footnote{It's important for us that this process produces exact results and not merely $p$-adic approximations, and it is for this reason that we must rely on conjectures regarding relations between motivic multiple zeta values for halting.} Examples were worked out above in the proof of \Cref{univcoord}.

\subsubsection*{Cocycle reduction formula}
\label{crf}
For $\rho \in \ep(Z)$ a letter, $\om$ a word in $\ep(Z)$, and
\[
c: U(Z) \to U(X)
\]
a $\Gm$-equivariant cocycle for the $n$th unipotent quotients, we have
\begin{equation*}
	\label{eq: CRF}
\tag{CRF}
c(\rho \om) = \rho c(\om) + c(\rho)c(\om).
\end{equation*}
This is a special case of Eq.~\eqref{eq:cocycle-on-lie-element}.

\subsubsection*{Action reduction formula}
\label{arf}
For $\rho \in \ep(Z)$ a letter, and $v,w$ words in $\ep(X)$, we have
\begin{equation*}
	\label{eq: ARD}
\tag{ARD}
\rho(vw) = v(\rho w) + (\rho v)w.
\end{equation*}
This is a special case of Eq.~\eqref{eq:hopf-action} and simply expresses the fact that $\rho$, since it is of Lie type, acts as a derivation. 

\label{21202B}
We now turn to the construction of the algorithm. In terms of cocycle families, our first task may be rephrased as follows: fixing $w$ a word in $\ep(X) = \{e_0, e_1\}$, $\om$ a word of length $\operatorname{len}(\om) \ge 2$ in $\ep(Z)$, and $c$ a cocycle family, we wish to write
\[
\Phi^\om_w(c) = \qq w, c\om \pp
= \qq f_w, c\om \pp
 \in R
\]
as a polynomial in elements $\Phi^{\om'}_{w'}(c) \in R$ with 
\[
\operatorname{len}(\om') < \operatorname{len}(\om).
\] 

To do so, we write $\om = \rho \om'$ where $\rho$ is a letter. By applying the cocycle reduction formula \eqref{eq: CRF}, we reduce to computing
\[
\big\qq w, \rho c(\om') \big\pp
\quad
\text {and}
\quad
\big\qq w, c(\rho)c(\om') \big\pp.
\]
The latter is given by
\[
\big\qq w, c(\rho)c(\om') \big\pp
= 
\big\qq v, c(\rho) \big\pp
\big\qq v', c(\om') \big\pp
=
\Phi^{\rho}_{v}(c) \Phi^{\om'}_{v'}(c)
\]
where $w = v v'$ is the decomposition of~$w$ into subwords of half-weights $\hwt(v) = \hwt(\rho)$ and $\hwt(v') = \hwt(\omega')$.
For the former, we write 
\[
\rho c(\om') = \sum_{w'}
\Phi^{\om'}_{w'}(c) \rho w'
\]
by which we're reduced to computing the motivic Galois action $\rho w'$ on a finite collection of words $w'$ in $\ep(X)$ with $\hwt(w') = \hwt(\omega')$. For this, we apply the action reduction formula \eqref{eq: ARD} to decompose $w'$ into its constituent letters. We then combine the formula of \Cref{galois-action-lemma} with the exact decomposition algorithm \ref{30302A} to compute all needed coefficients of $\rho e_1$. 

We're thus reduced to the case of a Selmer matrix entry $\Phi^\rho_w$ with $\rho \in \ep(Z)$ a letter. The coefficient-extraction functional $f_w \in A(X)= \cH(X)^\lor$ restricted to $\nN(X) \subseteq \cH(X)$ may be expanded
in dual Lyndon basis elements $\la(w_i)^\lor \in \nN(X)^\lor$ as
\[
f_w|_{\nN(X)} = \sum_i f_w(\lambda(w_i)) \lambda(w_i)^{\vee},
\]
with $w_i$ running through all Lyndon words of length $|w|$,
and so we have
\[
\Phi^\rho_w = \sum_i f_w(\lambda(w_i)) \Psi^\rho_{w_i}.
\]
This completes the construction. 

\subsection{Geometric step}
\label{30302Z}
We now use the algorithms discussed above to construct an algorithm which we refer to as the \emph{geometric step}. Let $\Si=\Si_{s,n}$ be the union of the sets of symbols $\Si_1 =\{ \tau_1, \dots, \tau_s \}$, $\Si_2 = \emptyset$, $\Si_3 = \{\si_3\}, \ldots, \Si_n = \emptyset$ if $n$ is even and $=\{\si_n\}$ if $n$ is odd. We regard $\Si$ as a graded set (i.e.\ a disjoint union indexed by the integers) by placing $\Si_i$ in graded degree $-i$. We refer to the graded degree also as the \emph{half-weight}. The shuffle algebra on $\Si$ is the free vector space on symbols $a_{\om}$ with $\om$ running through the words in the symbols $\Sigma$, with multiplication given by the shuffle product. This is graded by defining the half-weight of $a_{\omega}$ as $-\hwt(\om)$. We let $A(\Si)$ be the $\bQ$-subalgebra of this shuffle algebra generated by elements of half-weight $\leq n$, and let $K(\Si)$ denote the fraction field of $A(\Si)$. We denote by $U(\Sigma) = \Spec A(\Sigma)$ the unipotent group associated to $A(\Sigma)$.

We let $\nN(e_0, e_1) = \nN^{[n]}(e_0,e_1)$ denote the quotient of the free pro-nilpotent Lie algebra on $\{e_0, e_1\}$ by the $(n+1)$st step in the descending central series (the ``$n$th nilpotent quotient''), we let $U(e_0, e_1)$ be the associated unipotent group (the ``$n$th unipotent quotient''), and $A(e_0, e_1)$ its coordinate ring (the $n$th coordinate subalgebra). 

The \emph{geometric step} takes as input natural numbers $s$ and $n$ and outputs a list of elements
\[
F_1, \dots, F_m \in K(\Si_{s,n}) \otimes A(e_0, e_1).
\]

Before constructing the algorithm, we explain the meaning of its output. Let $Z$ be any dense open subscheme of $\Spec \ZZ$ with complement $S = \{q_1, \dots, q_s\}$. Let $A(Z) = A(Z)_{[\le n]}$ be the $n$th arithmetic coordinate subalgebra (\S\ref{21123A}), and let $K(Z)$ be the fraction field of $A(Z)$. We fix free generators (modulo the half-weight truncation) $\ep(Z) \subseteq \nN(Z)$ for the $n$th arithmetic nilpotent quotient using the construction of \S\ref{21123C}.\footnote{Let us emphasize that we here employ the \emph{Exact basis algorithm} (\ref{21123B}) applied to $Z$ in its role as a \emph{definition} only; in the construction of the \emph{Geometric step} we apply the Exact basis algorithm to $\Spec \ZZ$ only. 
}
Using these, we identify
\begin{equation*}
\tag{$\ast$} \label{eq:identification_AZ}
A(Z) \simeq A(\Si_{s,n}). 
\end{equation*}
We also have a canonical isomorphism of $n$th coordinate subalgebras $A(X) \simeq A(e_0,e_1)$. Let 
\[
\ev: U(Z) \times \rZ^1\big(U(Z), U(X)\big)^\Gm
\to U(Z) \times U(X)
\]
be the universal cocycle evaluation map, and let
\[
\ev_{K(Z)}: \rZ^1\big(U(Z), U(X)\big)^\Gm_{K(Z)}
\to U(X)_{K(Z)}
\]
be its base-change to $K(Z)$. We define the \emph{motivic Chabauty--Kim ideal}
\[
\cI^\m{mot}_Z= \cI^\m{mot}_{Z,n} \subseteq K(Z) \otimes_\QQ A(X)
\]
to be the kernel of the associated map of coordinate rings $\ev_{K(Z)}^\sharp$. Then in terms of the identification~\eqref{eq:identification_AZ}, the functions $F_1, \dots, F_m$ produced by the algorithm generate $\cI^\m{mot}_Z$.

We now turn to the construction of the algorithm.\footnote{We present the steps of the algorithm in their natural setting, and so accomplish two tasks at once: to define the algorithm and to show that the output upon halting is as stated.}  We begin by applying the basis algorithm (\ref{21123B}) to $\Spec \ZZ$.
The output allows us to fix an action of the $n$th unipotent quotient $U(\Spec \bZ) = U(\si_3, \si_5, \dots)$ (with generators $\sigma_i$ for $i \in [3,n]$ odd) on the $n$th unipotent quotient $U(e_0, e_1)$, and we let
\begin{equation*}
\tag{$\ast\ast$} \label{eq:abstract_action}
U(\Si_{s,n}) \quad \text{act on} \quad U(e_0, e_1)
\end{equation*}
via the quotient map
\[
U(\Si_{s,n}) \surj U(\si_3, \si_5, \dots).
\]
\footnote{This case is due to Brown \cite{BrownDecomp}.} 
Using the orderings
\begin{equation*}
	\tag{${\ast}{\ast}{\ast}$} \label{eq:ordering}
\tau_1 < \cdots < \tau_s < \si_3 < \cdots
\hand
e_0 < e_1
\end{equation*}
we fix algebra generators $a_\la$ for $A(\Si)$ and $\Li_\mu$ for $A(e_0,e_1)$ (where $\la, \mu$ run over Lyndon words of half-weight $\geq -n$ with respect to the chosen orderings). The action~\eqref{eq:abstract_action} and the orderings~\eqref{eq:ordering} give rise to a map of affine spaces over $\QQ$
\[
\ev_\m{abs}:
U(\Si) \times \rZ^1\big(U(\Si), U(e_0, e_1)\big)^\Gm
\to
U(\Si) \times U(e_0, e_1)
\]
commuting with the projections to $U(\Sigma)$.\footnote{This \emph{abstract} universal cocycle evaluation map is not purely abstract, since it does depend on the action~\eqref{eq:abstract_action} in which lurk relations between motivic multiple zeta values.} The associated homomorphism $\ev_\m{abs}^\sharp$ of polynomial $A(\Sigma)$-algebras satisfies
\[
\ev_\m{abs}^\sharp(\Li_w) = 
\sum_\om  a_\om \Phi^\om_w
\]
for any word $w$ in $\{e_0, e_1\}$ of length $\leq n$, with $\omega$ running through the words in $\Sigma$ with $\hwt(\omega) = \hwt(w)$.
Using the \emph{Decomposition of Selmer matrix entries} (\ref{21202B}) we expand the images $\ev_\m{abs}^\sharp(\Li_\mu)$ for $\mu$ a Lyndon word of length $\leq n$ as polynomials in the Selmer coordinate functions $\Psi^\rho_w$ with $A(\Si)$-coefficients.
Working over the purely transcendental field $K(\Si)$, we may now apply standard methods from elimination theory to construct a set of generators $F_1, \dots, F_m$ of the kernel.\footnote{Experience shows that practical computations on present-day computers typically break down here, and more elaborate constructions, or more sophisticated numerical methods are needed.} This completes the construction.

\subsection{Conditional definition of exact analytic Kim functions}
\label{30303A}
Let us denote by $A_{\myCol}(X)$ the ring of Coleman functions on $X(\bZ_p)$. Just as the \textit{Exact basis algorithm} (\ref{21123B}) was used in \S\ref{21123C} to \textit{define} the set of generators $\ep(Z) \subseteq \nN(Z)$, we may use the \textit{Exact basis algorithm} and the \textit{Geometric step} (\ref{30302Z}) to define analytic Kim functions $F_1^\m{an}, \dots, F_m^\m{an} \in A_{\myCol}(X)$. This definition, like the previous one, is \textit{conditional} in the sense that it depends on the halting of the algorithms to which it refers. 
However, when these algorithms do halt, the resulting Kim functions are \textit{exact} and not merely an approximation.\footnote{Thus, our definition of the locally analytic functions $F_1^\m{an}, \dots, F_m^\m{an}$ involves algorithms along the way, but they are not in themselves the output of an algorithm. Indeed, because our \emph{Approximate change of basis algorithm} (\ref{30301A}) is merely approximate, they are not directly accessible to us computationally. This exact specification of particular analytic Kim functions alongside the actual computational steps of the algorithm which eventually outputs numerical approximations of them is an important point: we do not wish to approximate a map of polynomial algebras \textit{first}, and only \textit{then} look for generators of its kernel.} 

The definition goes as follows. Let $\ep(Z) \subseteq \nN(Z)$ be the set of generators for the $n$th nilpotent quotient obtained in \S\ref{21123C}. We thus have fixed isomorphisms
\[
K(\Si_{s,n}) = K(Z)
\hand
A(e_0, e_1) = A(X).
\]
Using these, we transport the functions $F_1, \dots, F_m$ produced in the \textit{Geometric step} (\ref{30302Z}) to $K(Z) \otimes A(X)$. After clearing denominators we can assume that the $F_i$ are contained in $A(Z) \otimes A(X)$. The $p$-adic period map $\per_p\colon A(Z) \to \bQ_p$ and the $p$-adic de Rham Kummer map $\kappa_p^{\dR}\colon X(\bZ_p) \to U(X)_{\bQ_p}$ together provide a homomorphism of $\QQ$-algebras 
\[
A(Z) \otimes_\QQ A(X) \to A_{\myCol}(X);
\]
we apply it to $F_1, \dots, F_m$ to obtain the promised analytic Kim functions $F_1^\m{an}, \dots, F_m^\m{an}$.

\subsection{Equation solving algorithm}
\label{30302B}
The remainder of our \emph{Equation solving algorithm} is the same as in \cite[\S7]{MTMUEII}; let us summarize the construction. We're given $Z \subseteq \Spec \ZZ$ as input. We select arbitrarily a prime $p \in Z$. Our algorithm searches through the set of all triples $(h, N, \ep)$, $h, N \in \NN$, $\ep$ in a countable subset of $\RR_{> 0}$ with accumulation point $0$. To each such triple, our algorithm assigns a set $X(Z)_h$ of points of $X(Z)$ and a boolean. If the boolean output is {\it True}, then we output $X(Z)_h$. If the boolean output is {\it False}, then we move on to the next triple $(h,N,\ep)$. To produce the sets $X(Z)_h$, we simply search naively for points; this can easily be arranged so as to guarantee that for \textit{some} $h$ we have $X(Z)_h = X(Z)$.

We turn to the boolean output. Partition $X(\Zp)$ into $\ep$-balls, decreasing $\ep$ as needed to ensure that each ball contains at most one element of the set $X(Z)_h$. Apply the \textit{Exact basis algorithm} to $(Z,N)$ to obtain bases and generators to be used in the subsequent algorithms.
Apply the \textit{Geometric step} with $s$ equal to the number of primes inverted in $Z$ and $n=N$ to obtain motivic Kim functions $F_1, \dots, F_m$ (\ref{30302Z}). Apply the \emph{Approximate change of basis algorithm} followed by numerical approximation of $p$-adic iterated integrals \cite[\S6]{MTMUEII} to obtain $\ep$-approximations $F_1^\ep, \dots, F_m^\ep$ of the associated analytic Kim functions $F_i^\m{an}$ from (\ref{30303A}). Use Newton polygons to check if with our $\ep$-approximations we can guarantee that in each $\ep$-ball there exists a function $F_i^\m{an}$ whose number of roots (counted with multiplicity) is no greater than the number (0 or 1) of points of $X(Z)_h$.\footnote{This requires an estimate on the coefficient valuations in the power series expansions of multiple polylogarithms, generalizing the estimates from \cite[Proposition~6.7]{Lip} for single polylogarithms. In fact, the same estimates hold for a \emph{multiple} polylogarithm of weight~$n$, except that the zeroth coefficient has valuation bounded from below by $(1-\frac1{p-1})n$ instead of~$n$, which follows from \cite[Theorem 5.3]{Chat17}.} Output \textit{True} or \textit{False}.

\subsection{Equation solving theorem}
\label{30303Aii}

In the present setting, \textit{Kim's Conjecture} states that for $N$ sufficiently large, the set $X(\Zp)_{Z,N}$ cut out by the ideal of analytic Kim functions associated to the quotient of the unipotent fundamental group by the $(N+1)$st step along the descending central series is equal to the set $X(Z)$ of $Z$-integral points (\Cref{conj:kim}). \textit{Goncharov's conjecture refined with respect to ramification} \cite[Conjecture 2.2.7]{MTMUEII} states that for any open integer scheme $Z \subseteq \Spec \ZZ$ and any $n\in \NN$, there exists an open subscheme $Z^o \subseteq Z$ such that the $n$th coordinate subalgebra $A(Z^o)$ is spanned by combinatorially unramified motivic iterated integrals. The \textit{$p$-adic period conjecture} implies that the $p$-adic period map (restricted to the $n$th coordinate subalgebra)
\[
A(Z) \to \Qp
\]
is injective. In fact, in Theorem \ref{30305A}, we only need a small portion of this injectivity. 

As discussed in \S\ref{30302A}, there are many conjectures stating that some given set of relations between multiple zeta values generates all. Let us refer to the hope that at the very least, the union of all known relations generates all relations between motivic multiple zeta values as the \emph{Syncretist's motivic multiple zeta value conjecture}. When working, as we do, with unipotent motivic multiple zeta values à la Goncharov, we must add in the relations generated by `$\ze(2)=0$'. With the formulation of this conjecture, we've completed the task of making our statement of Theorem \ref{30305A} precise; its proof has been worked into our exposition of the \textit{Equation solving algorithm} (§\ref{30302B}).

\printbibliography

\vfill

\Small\textsc{
DC: Department of Mathematics, Ben-Gurion University.
}
\texttt{corwind@bgu.ac.il}

\medskip

\Small\textsc{
ID: Department of Mathematics, Ben-Gurion University.
}
\texttt{ishaida@bgu.ac.il}

\medskip

\Small\textsc{
ML: Carl von Ossietzky University Oldenburg and Department of Mathematics, Ben-Gurion University.
}
\texttt{martin.luedtke@uol.de}

\end{document}